\def\eps{\varepsilon}
\newcommand{\Sw}{\mathcal{S}}
\newcommand{\laps}[1]{(-\lap)^{\frac{#1}{2}}}
\newcommand{\lapms}[1]{I_{#1}}
\newcommand{\lap}{\Delta}
\newcommand{\Rz}{\mathcal{R}}
\newcommand{\loc}{\mathrm{loc}}
\newcommand{\aleq}{\prec}
\newcommand{\dist}{\mathrm{dist}}
\newcommand{\R}{\mathbb{R}}
\newcommand{\N}{\mathbb{N}}
\newcommand{\vrac}[1]{\| #1 \|}
\newtheorem{theorem}{Theorem}[section]
\newtheorem{definition}[theorem]{Definition}
\newtheorem{lemma}[theorem]{Lemma}
\newtheorem{proposition}[theorem]{Proposition}
\newtheorem{remark}[theorem]{Remark}
\date{}
\begin{document}
\title{Blow-up behaviour of a fractional Adams-Moser-Trudinger type inequality in odd dimension}
\author{Ali Maalaoui, Luca Martinazzi\thanks{L.M., A.M. supported by Swiss National Science Foundation.}, Armin Schikorra\thanks{A.S.' research leading to these results has received funding from the European Research Council under the European Union's Seventh Framework Programme (FP7/2007-2013) / ERC grant agreement n° 267087. A.S. is partially supported by Swiss National Science Foundation.}}
\maketitle

\begin{abstract}
Given a smoothly bounded domain $\Omega\Subset\R^n$ with $n\ge 1$ odd, we study the blow-up of bounded sequences $(u_k)\subset H^\frac{n}{2}_{00}(\Omega)$ of
solutions to 
the non-local equation
$$(-\Delta)^\frac n2 u_k=\lambda_k u_ke^{\frac n2 u_k^2}\quad \text{in }\Omega,$$
where $\lambda_k\to\lambda_\infty \in [0,\infty)$, and
$H^{\frac n2}_{00}(\Omega)$ denotes the Lions-Magenes spaces of functions $u\in L^2(\R^n)$ which are supported in $\Omega$ and with $(-\Delta)^\frac{n}{4}u\in L^2(\R^n)$.
Extending previous works of Druet, Robert-Struwe and the second author, we show that if the sequence $(u_k)$ is not bounded in $L^\infty(\Omega)$, a suitably rescaled subsequence $\eta_k$ converges to the function $\eta_0(x)=\log\left(\frac{2}{1+|x|^2}\right)$, which solves the prescribed non-local $Q$-curvature equation
$$(-\Delta)^\frac n2 \eta =(n-1)!e^{n\eta}\quad \text{in }\R^n$$
recently studied by Da Lio-Martinazzi-Rivi\`ere when $n=1$, Jin-Maalaoui-Martinazzi-Xiong when $n=3$, and Hyder when $n\ge 5$ is odd.
We infer that blow-up can occur only if $\Lambda:=\limsup_{k\to \infty}\|(-\Delta)^\frac n4 u_k\|_{L^2}^2\ge \Lambda_1:= (n-1)!|S^n|$.
\end{abstract}
\section{Introduction}

In this paper we study some compactness properties of the embedding of $H^{\frac n2}_{00}(\Omega)$ into Orlicz spaces, where $\Omega$ is a smoothly bounded domain in $\R^n$. In order to introduce the relevant function spaces we start by recalling various definitions of fractional Laplacians.

Let $\mathcal{S}(\R^n)$ denote Schwarz space of smooth and rapidly decreasing functions on $\R^n$. For a function $u\in\mathcal{S}(\R^n)$ and for $s \in (0,\infty)$,
we define
\[
 \laps{s}u := (|\cdot|^{2s} u^\wedge)^\vee.
\]
Here the Fourier transform is defined via
\[
  u^\wedge(\xi) \equiv \mathcal{F}u(\xi) :=\frac{1}{ (2\pi)^{n/2}} \int_{\R^n} e^{-i x\cdot \xi}\ u(x) dx.
\]
and $u^\vee$ is its inverse.

For $s \in (0,2)$ one can also prove (see e.g. \cite{DNPV}) that for a certain constant $c_{n,s} \in \R$
\[
 \laps{s}u(x) = c_{n,s} P.V. \int_{\R^n} \frac{u(x+h)-u(x)}{|h|^{n+s}}\, dh. 
\]
In order to define the operator $(-\Delta)^s$ on a space larger than the Schwarz space, set for $s>0$
\begin{equation}\label{defLs} 
L_s(\R^n):=\left\{u\in L^1_{\loc}(\R^n): \int_{\R^n}\frac{|u(x)|}{1+|x|^{n+s}}dx<\infty\right\}.
\end{equation}
Then for $u\in L_s(\R^n)$ we can define
$(-\Delta)^s u$ as a tempered distribution as follows:
$$\langle (-\Delta)^\frac{s}{2} u,\varphi\rangle:=\int_{\R^n} u(-\Delta)^\frac{s}{2} \varphi dx,\quad \text{for }\varphi\in\mathcal{S}(\R^n).$$
This is due to the fact that for $\varphi\in \mathcal{S}(\R^n)$ one has $(1+|x|^{n+s})|(-\Delta)^\frac{s}{2} \varphi(x)|\le C$ for a constant depending on $\varphi$ but not on $x$, see \cite[Proposition 2.2]{Ali} and \cite{Sil}. 

We can now define the space
$$H^s(\R^{n}):=\{u\in L^2(\R^n):(-\Delta)^\frac s2 u\in L^2(\R^n)\},$$
endowed with the norm  
$$\|u\|_{H^s(\R^n)}^{2}:=\|u\|_{L^2(\R^n)}^2+\|(-\Delta)^\frac s2 u\|_{L^2(\R^n)}^2,$$
where with the expression $(-\Delta)^\frac s2 u\in L^2(\R^n)$ we mean that the tempered distribution $(-\Delta)^\frac s2 u$ can be represented by a square-summable function. 


Given a bounded set $\Omega\Subset \R^n$ we also define its subspace
$$H_{00}^{s}(\Omega):=\{u\in H^s(\mathbb{R}^{n}) : u\equiv0 \text{ on } \Omega^{c}\}.$$
In particular we will consider the space $X(\Omega):= H^{\frac n2}_{00}(\Omega)$ for $n$ is odd,
endowed with the norm
$$\|u\|_{X}^{2}:=\|(-\lap)^\frac n4 u\|_{L^2(\R^n)}^2=\int_{\mathbb{R}^{n}}|\xi|^{n}|\hat{u}(\xi)|^{2}d\xi.$$
The norms $\|\cdot \|_X$ and $\|\cdot\|_{H^\frac n2(\R^n)}$ are equivalent on $H_{00}^\frac n2(\Omega)$ by a Poincar\'e-type inequality. 
The space $H_{00}^s(\Omega)$ is also known as Lions-Magenes space, and is sometimes denoted by
$\tilde H^s(\Omega)$, or even $L^{s,2}_0(\Omega)$.

\medskip

We recall the following fractional version of the Adams-Moser-Trudinger inequality, see \cite[Theorem 1]{MS}:
\begin{theorem}\label{trmMT}
For any integer $n>0$ there exists a constant $C_n>0$ such that for every open set $\Omega\subset\R^n$ with finite volume $|\Omega|$ one has
\begin{equation}\label{probsup}
\sup_{u\in X(\Omega),\,\|u\|_{X}^2\le \Lambda_1}\int_{\Omega} e^{\frac n2 u^{2}} dx\le C_n|\Omega|,
\end{equation}
where $\Lambda_1:=(n-1)!|S^n|$.
\end{theorem}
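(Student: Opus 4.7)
My plan is to adapt Adams' strategy for higher-order Moser-Trudinger inequalities to the fractional setting, replacing the Green's function of the polyharmonic operator by the Riesz potential kernel of $(-\Delta)^{-n/4}$.

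First, I would represent $u$ as a Riesz potential. For $u \in X(\Omega)$ the function $f := (-\Delta)^{n/4} u$ lies in $L^2(\R^n)$ with $\|f\|_{L^2(\R^n)}^2 = \|u\|_X^2 \le \Lambda_1$, and the inversion of $(-\Delta)^{n/4}$ gives
$$u(x) = \gamma_n \int_{\R^n} \frac{f(y)}{|x-y|^{n/2}}\, dy, \qquad x \in \R^n,$$
with an explicit constant $\gamma_n$ (the Gamma factors in the Riesz kernel simplify nicely at the critical exponent $\alpha = n/2$). This reduces the problem to an exponential integrability estimate for a convolution with a specific singular kernel.

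Second, I would invoke Adams' abstract integrability lemma: if $K(x,y)$ is a kernel on $\Omega \times \Omega$ whose level sets satisfy $|\{y : |K(x,y)| > t\}| \le B\, t^{-2}$ uniformly in $x$, then for every $f \in L^2(\R^n)$ with $\|f\|_{L^2} \le 1$,
$$\int_{\Omega} \exp\!\bigl(\tfrac{1}{B}\, |(K\ast f)(x)|^{2}\bigr)\, dx \le C\,|\Omega|.$$
For our kernel $K(x,y) = \gamma_n |x-y|^{-n/2}$ the weak-$L^2$ constant $B$ reduces to the volume of a ball in $\R^n$, so everything is explicit. To get the sharp constant one follows Adams' refined argument: split $K$ into a near-diagonal singular part and a bounded far-field part, rearrange, and use the one-dimensional Adams--Garsia lemma for the rearranged profile.

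Third, I would match all constants. The threshold in the exponent equals $1/(\gamma_n^{2}\,\|\,|\cdot|^{-n/2}\|_{L^{2,\infty}(\R^n)}^{2})$ after normalizing $f$; using $|S^{n-1}| = 2\pi^{n/2}/\Gamma(n/2)$ and the Legendre duplication formula, this collapses to exactly $\tfrac{n}{2}\cdot\Lambda_1^{-1}$ with $\Lambda_1 = (n-1)!\,|S^{n}|$. Thus choosing $\beta = n/2$ and $\|u\|_X^2 \le \Lambda_1$ saturates the Adams threshold and yields \eqref{probsup}.

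The main obstacle is sharpness. A direct Hölder estimate on the Riesz representation diverges logarithmically, so one cannot avoid Adams' delicate decomposition of the kernel together with the one-dimensional rearrangement lemma to extract the critical exponent. The non-local nature of $(-\Delta)^{n/4}$ itself causes no difficulty here, because once the Riesz representation is in hand the argument is purely kernel-theoretic; however, for odd $n$ the half-integer $\Gamma$-values must be handled carefully to see the arithmetic produce precisely $(n-1)!\,|S^n|$.
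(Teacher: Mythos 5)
You should first be aware that the paper does not prove Theorem \ref{trmMT} at all: it is quoted verbatim from the reference \cite{MS}, so there is no internal proof to compare against. Your outline reproduces the Adams strategy that \cite{MS} does follow, and your constant bookkeeping in the third step is in fact correct (with the normalized Riesz kernel $\gamma(n/2)=(2\pi)^{n/2}$ the Adams threshold becomes $\frac{n}{|S^{n-1}|}(2\pi)^n$, and the identity $(n-1)!\,|S^n|\,|S^{n-1}|=2(2\pi)^n$ checks out for all $n$). The problem is that the two points you dismiss are exactly where the work lies.

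The genuine gap is in your first step, and it propagates into the second. (1) The representation $u(x)=\gamma_n\int_{\R^n}|x-y|^{-n/2}f(y)\,dy$ with $f=(-\Delta)^{n/4}u\in L^2(\R^n)$ is not legitimate as written: the kernel $|x-\cdot|^{-n/2}$ fails to be square-integrable at infinity, so the integral can diverge for every $x$ (take $f(y)=|y|^{-n/2}(\log|y|)^{-1}$ for large $|y|$, which is in $L^2$). On $L^2$ the critical Riesz potential $I_{n/2}$ is only defined modulo additive constants (it maps into $BMO$, not into functions), and an undetermined additive constant is fatal the moment you exponentiate $u^2$. One must prove a genuine representation formula using that $u$ itself vanishes outside $\Omega$ and lies in $L^2$. (2) Even granting a representation, $f=(-\Delta)^{n/4}u$ is \emph{not} supported in $\Omega$: the operator is nonlocal, so $f$ lives on all of $\R^n$ and may carry an arbitrarily large fraction of the norm $\Lambda_1$ far from $\Omega$. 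Adams' lemma in the form you quote -- a kernel on $\Omega\times\Omega$ acting on a source over a finite-measure set, and more fundamentally the O'Neil rearrangement step -- is calibrated to compactly supported data, which is automatic in Adams' classical setting ($\nabla^m u$ has the same support as $u$) but fails here; the contribution to $u|_\Omega$ of the far-away part of $f$ must be controlled separately. Your closing assertion that ``the non-local nature of $(-\Delta)^{n/4}$ itself causes no difficulty'' is therefore precisely backwards: it is the essential new difficulty of the fractional case, and without addressing it the argument does not close.
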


When $n=2$ the above theorem is a special case of the Moser-Trudinger inequality \cite{tru}, and when $n>2$ is even it is a special case of Adams' inequality \cite{ada}.

\medskip

In this paper we want  to study the blow-up behavior of extremals of \eqref{probsup}, i.e.
weak solutions $u\in X(\Omega)$ of the  Euler-Lagrange equation  
\begin{equation}\label{equaprob}
(-\Delta)^{\frac{n}{2}}u=\lambda ue^{\frac n2 u^{2}},\quad \text{for some }\lambda\in \R,
\end{equation}
which can be intended in the following sense:

\begin{definition}
Given $f\in L^{2}(\Omega)$, a function with $u\in X(\Omega)+\R$ (i.e. $u+c\in X(\Omega)$ for some $c\in\R$) is a weak solution to 
\begin{equation}\label{eq}
(-\Delta)^{\frac{n}{2}}u=f\quad \text{in $\Omega$}
\end{equation}
if
\begin{equation}\label{eqweak}
\int_{\R^n} (-\lap)^\frac n4 u\, (-\lap)^\frac n4 \varphi dx = \int_{\R^n} f \varphi dx, \quad \forall \varphi\in X(\Omega).
\end{equation}
\end{definition}

\begin{remark}\label{rmkinteg} It follows from Theorem \ref{trmMT} that for $u\in X(\Omega)$ one has $e^{u^2}\in L^p(\Omega)$ for every $p\in [1,\infty)$ (see also \cite[Theorem 9.1]{pee}). In particular the right-hand side of \eqref{equaprob} belongs to $L^p(\Omega)$ for $p\in [1,\infty)$.
\end{remark}

The Lagrange multiplier $\lambda$ in \eqref{equaprob} can be computed by testing the equation with $\varphi=u$ (in the spirit of \eqref{eqweak}). This leads to 
\begin{equation}\label{energy}
\|(-\lap)^{\frac{n}{4}} u\|_{L^2(\R^n)}^2=\lambda\int_\Omega u^2e^{\frac n2 u^2}dx,
\end{equation}
whence $\lambda>0$, unless $u\equiv 0$.

We are interested in the study of the blowing-up behavior of a sequence of continuous solution to the following problem :
\begin{equation}\label{eq0}
\left\{\begin{array}{ll}
(-\Delta)^{\frac{n}{2}}u_{k}=\lambda_{k}u_{k}e^{\frac{n}{2}u_{k}^{2}} &\text{in $\Omega$}\\
u_{k} \in X(\Omega) &
\end{array}
\right.
\end{equation}
where $\lambda_{k}\geq 0$. 

\begin{remark}\label{rmkreg} It follows from Remark \ref{rmkinteg}, from the estimates in \cite{gru} and bootstrapping, that every solution $u$ to \eqref{equaprob} belongs to $C^{\frac{n-1}{2},\alpha}(\bar\Omega)\cap C^\infty(\Omega)$ for some $\alpha\in (0,1)$, and in fact  the function $d^{-\frac n2}u:\Omega\to\R$, where $d$ is the distance function from $\partial\Omega$, can be extended to a function in $C^\infty(\bar\Omega)$. In particular, $\sup_{\Omega} u_k \in \R$.
\end{remark}

The main result of this paper can be stated as follows :

\begin{theorem}\label{main}
Consider a \emph{bounded} sequence $(u_{k})_{k\in \mathbb{N}} \subset X(\Omega)$ of solutions to (\ref{eq0}). Set $m_{k}:=\sup_{\Omega} |u_{k}|$ and 
$$\Lambda:=\limsup_{k\to\infty}\|u_k\|_{X}^2<\infty.$$ Up to possibly replacing $u_k$ with $-u_k$ we can assume that $m_k=\sup_{\Omega} u_k$ for every $k$. Assume also that $0<\lambda_k\le \bar \lambda$ for some $\bar \lambda<\infty$ and that $\lambda_k\to\lambda_\infty$ as $k\to\infty$. Then up to extracting a subsequence one of the following holds:\\
\begin{itemize}
\item[(i)] $\lim_{k\to\infty} m_{k}<\infty$ and $u_{k}$ converges to $u_{\infty}$ in $C^{\ell}_{\loc}( \Omega)\cap C^\frac{n-1}{2}(\bar\Omega)$  for any $\ell\in\mathbb{N}$ and $u_{\infty}\in C^\ell_{\loc}(\Omega)\cap C^\frac{n-1}{2}(\bar\Omega)\cap X(\Omega)$ solves
$$(-\Delta)^{\frac{n}{2}}u_\infty=\lambda_\infty u_\infty e^{\frac{n}{2}u_\infty^{2}} \quad \text{in }\Omega.$$

\item[(ii)]  $\lim_{k\to\infty} m_{k}=\infty$, $\Lambda\ge \Lambda_1$, with $\Lambda_1$ as in Theorem \ref{trmMT}, 
and setting $r_{k}$ such that
\begin{equation}\label{eq:rkdef}
\lambda_{k}m_{k}^{2}e^{\frac{n}{2} m_{k}^{2}}r_{k}^{n}=2^n(n-1)!,
\end{equation} 
and
\begin{equation}\label{defetak}
\eta_{k}(x):=m_{k}(u_{k}(x_{k}+r_{k}x)-m_{k}),\quad \eta_0(x):=\log\left(\frac{2}{1+|x|^{2}}\right),
\end{equation}
 one has $\eta_k +\log 2 \to\eta_0$ in $C^{\ell}_{\loc}(\R^n)$ for every $\ell\ge 0$.
\end{itemize}
\end{theorem}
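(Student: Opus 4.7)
I split along the dichotomy $\liminf_k m_k<\infty$ (case (i)) versus $m_k\to\infty$ (case (ii)) and pass to a subsequence accordingly. \emph{In case (i)}, I use that $f_k:=\lambda_k u_k e^{\frac n2 u_k^2}$ is uniformly bounded in $L^\infty(\Omega)$, and invoke the Dirichlet regularity for $(-\Delta)^{n/2}$ recalled in Remark~\ref{rmkreg} (from \cite{gru}) together with a bootstrap to obtain uniform $C^{(n-1)/2,\alpha}(\bar\Omega)\cap C^\ell_{\loc}(\Omega)$ estimates. Arzelà-Ascoli gives the convergent subsequence, and passing to the limit in the weak formulation \eqref{eqweak} with a fixed test function $\varphi$ identifies $u_\infty$ as a solution of the stated equation.

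\emph{Case (ii): rescaling setup.} Let $x_k\in\bar\Omega$ realise $m_k$. Since $\lambda_k$ is bounded and $m_k\to\infty$, the defining relation \eqref{eq:rkdef} gives $r_k\to 0$. Using the boundary bound $u_k(x)\le C\dist(x,\partial\Omega)^{n/2}$ implicit in Remark~\ref{rmkreg}, I verify $\dist(x_k,\partial\Omega)/r_k\to\infty$, so $\eta_k$ is defined on balls exhausting $\R^n$. A direct scaling of \eqref{eq0} yields, on $B_{\dist(x_k,\partial\Omega)/r_k}$,
\begin{equation*}
(-\Delta)^{n/2}\eta_k(x)=2^n(n-1)!\,\Bigl(1+\tfrac{\eta_k(x)}{m_k^2}\Bigr)\,e^{n\eta_k(x)+\frac{n\eta_k(x)^2}{2m_k^2}},
\end{equation*}
which is uniformly in $L^\infty_{\loc}(\R^n)$ since $\eta_k\le 0=\eta_k(0)$. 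To upgrade this to uniform $C^\ell_{\loc}(\R^n)$ bounds on $\eta_k$, I split $u_k$ into its restriction to $B_{Rr_k}(x_k)$ and the exterior piece: the interior part is governed by the local $L^\infty$-bound on the forcing, while the exterior tail, after rescaling, produces a smooth contribution controlled by a weighted $L^1$-integral of $u_k$ against $(1+|y|)^{-2n}$, itself bounded via the uniform estimate $\|u_k\|_X\le \sqrt{\Lambda}+o(1)$.

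\emph{Identification and energy.} Extracting $\eta_k\to\tilde\eta$ in $C^\ell_{\loc}(\R^n)$ and passing to the limit gives
\begin{equation*}
(-\Delta)^{n/2}\tilde\eta = 2^n(n-1)!\,e^{n\tilde\eta},\quad \tilde\eta\le 0=\tilde\eta(0),\quad \int_{\R^n}e^{n\tilde\eta}dx<\infty,
\end{equation*}
the last bound via Fatou from \eqref{energy} under the change of variables $y=x_k+r_k x$. The classification of Da Lio-Martinazzi-Rivière ($n=1$), Jin-Maalaoui-Martinazzi-Xiong ($n=3$) and Hyder ($n\ge 5$ odd) cited in the introduction then identifies $\tilde\eta=\eta_0-\log 2$, i.e.\ $\eta_k+\log 2\to\eta_0$. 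The same change of variables in \eqref{energy} together with Fatou and the identity $\int_{\R^n}e^{n\eta_0}dx=|S^n|$ (standard Beta-function computation) yields $\Lambda\ge 2^n(n-1)!\int_{\R^n}e^{n(\eta_0-\log 2)}dx=(n-1)!|S^n|=\Lambda_1$.

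\textbf{Main obstacle.} I expect the compactness step in case (ii) to be the crux: the nonlocality of $(-\Delta)^{n/2}$ precludes a purely local elliptic argument for the rescaled equation, and the macroscopic tail of $u_k$ must be controlled uniformly in $k$ to extract the bubble profile. The uniform $X(\Omega)$-bound supplies exactly the weighted $L^1$-control needed after rescaling, but making this quantitative—and dovetailing it with the boundary-avoidance step, which itself rests on a non-trivial uniform bound for $u_k/\dist(\cdot,\partial\Omega)^{n/2}$—is where the real work lies.
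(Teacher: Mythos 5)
Your skeleton (dichotomy, rescaling, classification, energy lower bound) matches the paper's, and case (i) is fine, but the two steps you yourself flag as the crux contain genuine gaps, and a third one hides in the identification step. First, the boundary-avoidance: Remark~\ref{rmkreg} gives $d^{-n/2}u_k\in C^\infty(\bar\Omega)$ for \emph{each} $k$, but with a constant controlled by $\|\lambda_k u_ke^{\frac n2 u_k^2}\|_{L^\infty}$, which is exactly what blows up; there is no uniform bound $u_k\le C\dist(\cdot,\partial\Omega)^{n/2}$ available, so your derivation of $\dist(x_k,\partial\Omega)/r_k\to\infty$ does not close. The paper instead proves Lemma~\ref{rk} by contradiction with the normalized functions $v_k=u_k(x_k+r_k\cdot)/m_k$, using $\|(-\lap)^{\frac n4}v_k\|_{L^2}=m_k^{-1}\|u_k\|_X\to0$ to force $v\equiv1$ and $v\equiv0$ simultaneously. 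Second, and more seriously, your tail control for the $C^\ell_{\loc}$ bounds on $\eta_k$ invokes a weighted $L^1$ bound on $u_k$ coming from $\|u_k\|_X\le\sqrt\Lambda+o(1)$, but the object you need to control is $\eta_k=m_k(u_k(x_k+r_k\cdot)-m_k)$, whose $X$-norm is $m_k\|u_k\|_X\to\infty$; the factor $m_k$ destroys any direct use of the energy bound, and the local estimate of Lemma~\ref{la:LinftyRHSest} then has a divergent right-hand side. Recovering this lost factor $1/m_k$ is the heart of the paper: since $u_k\ge m_k/2$ on $B_{Rr_k}(x_k)$, one bounds $\int_{B_{Rr_k}}|\laps{s}u_k|$ by $\frac2{m_k}\int_{B_{Rr_k}}|u_k\laps{s}u_k|$ and proves the scale-invariant estimate $\int_{B_\rho}|u_k\laps{s}u_k|\le C\rho^{n-s}$ (Propositions~\ref{la:ulapuest} and~\ref{gradest}) via Lorentz-space, borderline Riesz-potential and disjoint-support commutator estimates. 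Your proposal contains neither this idea nor a working substitute.

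Third, the identification of the limit is not a direct consequence of the cited classifications when $n\ge3$ is odd: as the paper stresses, equation \eqref{eqeta0} admits many non-spherical solutions (with $\eta_0=v+P$, $P$ a nonconstant polynomial of degree $\le n-1$ bounded from above), and the conditions $\tilde\eta\le0=\tilde\eta(0)$, $\int e^{n\tilde\eta}<\infty$ do not exclude them. One must additionally show that $\eta_k$, hence $\eta_0$, is uniformly bounded in $L_s(\R^n)$ for \emph{every} $s>0$ (Proposition~\ref{boundLe}), which is incompatible with a nonconstant polynomial part, and only then does Theorem~\ref{thmali} single out the spherical profile. The same $L_s$ bounds are also needed earlier than you use them: because $\laps{n}\varphi$ is not compactly supported, $C^\ell_{\loc}$ convergence of $\eta_k$ alone does not justify passing to the limit in the distributional equation $\int\eta_k\laps{n}\varphi\,dx\to\int\eta_0\laps{n}\varphi\,dx$; the tail $\int_{\R^n\setminus B_{\theta R}}|\eta_k||\laps{n}\varphi|$ must be controlled uniformly in $k$, which is again Proposition~\ref{boundLe}. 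So the limit equation you write down, and the identification $\tilde\eta=\eta_0-\log2$, both rest on estimates your argument does not supply.
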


Since Theorem \ref{main} was proven in \cite{AS}, \cite{dru}, \cite{RS} and \cite{mar4} when $n$ is even, we shall only consider the remaining case $n$ odd. In some proofs we will focus on the case $n\ge 3$, but simple modifications make every argument work for the case $n=1$. In fact the case $n=1$ is a slightly simpler, since comparison principles and in particular the Harnack inequality are available. 

The general strategy of the proof is similar to the one in the even-dimensional case, but some new difficulties arise due to the nonlocal nature of the operator $(-\Delta)^\frac{n}{2}$, as we shall now describe.

One would like to shows that in case of blow-up (Case (ii) in Theorem \ref{main}) the functions $\eta_k$ converge to a function $\eta_0\in L_n(\R^n)$ solving
\begin{equation}\label{eqeta0}
(-\Delta)^\frac n2 \eta_0=(n-1)! e^{n\eta_0}\quad \text{in }\R^n,\quad V:=\int_{\R^n} e^{n\eta_0}dx<\infty,
\end{equation}
and then prove that, among all solutions to \eqref{eqeta0}, $\eta_0$ has the special form given by \eqref{defetak}.

The first problem is that the local convergence of $\eta_k$ to a function $\eta_0$ rests on local gradient bounds for $\eta_k$ not depending on $k$ (when $n=1,2$ such bound are not necessary, thanks to the Harnack inequality). This is the content of Propositions \ref{la:ulapuest} and \ref{gradest}, one of the crucial parts of the paper.  In particular we will show that for $s < n$,
\begin{equation}\label{stimagu}
\int_{B_\rho(x_0)}|u_k(-\Delta)^\frac s2 u_k|\, dx\le C \rho^{n-s},\quad \text{for }B_{10\rho}(x_0)\subset\Omega.
\end{equation}
In the previous work \cite{mar4} an analogous estimate was obtained by noticing that $(-\Delta)^\frac{n}{2}(u_k^2)$ is uniformly bounded in $L^1(\Omega)$ when $n$ is even. Unfortunately this was based on an explicit expansion of $(-\Delta)^\frac{n}{2} (u_k^2)$ as sum of partial derivatives of $u_k$, which is of course not possible when $n$ is odd. Here instead we reduce \eqref{stimagu} to a the bound
$$\|u_k(-\Delta)^\frac s2 u_k\|_{L^{(\frac ns,\infty)}(B_\rho(x_0))}\le C,$$
which will be proven writing $u_k(-\Delta)^\frac s2 u_k$ in terms of the Riesz potential. The formal heuristic argument goes as follows. Write formally
\begin{equation}\label{formalid}
\begin{split}
u_k(-\Delta)^\frac s2 u_k &=u_k I_{n-s}(-\Delta)^\frac{n}{2} u_k\\
&=: (I_{\frac n2}(-\Delta)^\frac{n}{4}u_k)I_{n-s}(\theta(-\Delta)^\frac{n}{2} u_k) + I_{n-s}(u_k(-\Delta)^\frac{n}{2} u_k)+E\\
&=: A+B+E,
\end{split}
\end{equation}
where $\theta\in C^\infty_0(B_{2\rho}(x_0))$ is a cut-off functions, $I_t$ denotes the Riesz potential, and $E$ is an error term, which can be bounded using a commutator-type estimate.
Then one has to bound the term $A$ in $L^{(\frac{n}{s},\infty)}(B_\rho(x_0))$ using that $(-\Delta)^\frac{n}{4} u_k$ is bounded in $L^2(\R^n)$, while $(-\Delta)^\frac{n}{2} u_k$ is bounded in $L\log^\frac12 L(\Omega)$. These are borderline estimates, for instance because $I_{\frac{n}{2}}$ fails to send $L^2$ into $L^\infty$. Using elementary tricks we are able to circumvent this problem, obtaining Propositions \ref{borderline1}  and \ref{borderline2}. In order to bound $B$ one uses the PDE, and in particular that $u_k(-\Delta)^\frac n2 u_k$ is bounded in $L^1(\Omega)$.
Finally, to move from the formal argument to a rigorous one, and in particular to replace the first identity in \eqref{formalid} with a correct identity, we have to approximate $u_k$ with functions in $C^\infty_c(\R^n)$. The necessary technical results are contained in Section \ref{sec:borderline} and the appendix. 

The second problem, still related to the non-local nature of $(-\Delta)^\frac n2$, is that uniform estimates on the derivatives of the blown-up functions $\eta_k$ do indeed guarantee that $\eta_k\to\eta_0$ in $C^\ell_{\loc}(\R^n)$ (up to the additive constant $\log 2$ which we shall now ignore) for $\ell\le n-1$, but why should the convergence
\begin{equation}\label{convetak}
(-\Delta)^\frac{n}{2} \eta_k \to (-\Delta)^\frac{n}{2} \eta_0 \quad\text{in }\mathcal{S}'(\R^n) \text{ as }k\to\infty
\end{equation}
hold? Indeed \eqref{convetak} means that
\begin{equation}\label{convetak2}
\lim_{k\to\infty}\int_{\R^n} \eta_k \,(-\Delta)^\frac{n}{2} \varphi dx \to \int_{\R^n} \eta_0 \,(-\Delta)^\frac{n}{2} \varphi dx \quad \text{for every }\varphi \in \mathcal{S}(\R^n),
\end{equation}
and since even for $\varphi\in C^\infty_c(\R^n)$ we have that $(-\Delta)^\frac{n}{2}\varphi$ is not compactly supported, the local convergence of $\eta_k$ is not sufficient to guarantee \eqref{convetak2}. A priori it is not to be ruled out that while $\eta_k\to \eta_0$ very nicely in a compact set, ``at infinity'' $\eta_k$ has a wild behaviour. To rule this out we shall prove uniform bounds of $\eta_k$ in $L_s(\R^n)$ for any $s>0$, which is the content of Proposition \ref{boundLe}. Here we critically use that $\eta_k$ is uniformly upper bounded by construction, and the local bounds on the derivatives of $\eta_k$.

At this point it will be easy to conclude that $\eta_k$ locally converges to a function $\eta_0\in L_n(\R^n)$ solving \eqref{eqeta0}. Now we are faced with the problem of determining $\eta_0$. Indeed, similarly to what was shown in \cite{CC}, also in odd dimension $3$ or higher, Problem \eqref{eqeta0} has many solutions, as shown in \cite{JMMX} (when $n=3$) and \cite{Ali0} (for any $n\ge 3$ odd). Here we are able to use the following recent result of Ali Hyder, together with the previous bounds to show that among all solutions of \eqref{eqeta0} actually $\eta_0$ is a special one, precisely the one given in \eqref{defetak}.

\begin{theorem}[A. Hyder \cite{Ali}]\label{thmali}
Let $\eta_0\in L_n(\R^n)$ solve \eqref{eqeta0}. Then $\eta_0$ can be decomposed as $\eta_0= v+P$, where $P$ is a polynomial of degree at most $n-1$, and $v(x)=-\alpha \log(|x|)+o(\log|x|)$ as $|x|\to \infty$. Moreover $P$ is constant if and only if
\begin{equation}\label{spherical}
\eta_0(x)=\log\frac{2\lambda}{1+\lambda^2|x-x_0|^2},\quad \text{for some }\lambda>0,\quad x_0\in\R^n.
\end{equation}
\end{theorem}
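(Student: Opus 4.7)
}
The natural strategy is to convert the PDE into an integral equation via the fundamental solution of $(-\Delta)^{n/2}$ on $\R^n$ for odd $n$, which is the logarithmic kernel $\tfrac{1}{\gamma_n}\log\tfrac{1}{|x|}$ for an explicit constant $\gamma_n>0$. Motivated by this, I would set
\[
v(x):=\frac{(n-1)!}{\gamma_n}\int_{\R^n}\log\frac{|y|}{|x-y|}\,e^{n\eta_0(y)}\,dy,
\]
which converges absolutely for every $x\in\R^n$ because $e^{n\eta_0}\in L^1(\R^n)$ by hypothesis and the kernel has only logarithmic singularities both near $y=x$ and near $y=\infty$ (after using $|y|/|x-y|\sim 1$ for $|y|\gg |x|$).

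The first step is to verify that $v\in L_n(\R^n)$ and that $(-\Delta)^{n/2}v=(n-1)!\,e^{n\eta_0}$ in $\mathcal{S}'(\R^n)$. Then $P:=\eta_0-v$ satisfies $(-\Delta)^{n/2}P=0$ in $\mathcal{S}'(\R^n)$ and lies in $L_n(\R^n)$. A tempered-distribution Liouville argument (the Fourier transform of $P$ is supported at the origin, hence $P$ is a polynomial; combined with the $L_n$-integrability of $\eta_0$ and the integral bound on $v$ established in the next step, the degree is forced to be at most $n-1$). This yields the decomposition $\eta_0=v+P$.

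The second step is the asymptotic analysis of $v$. Splitting the integral into the four regions $|y|\le R$, $R\le|y|\le |x|/2$, $|x|/2\le |y|\le 2|x|$ and $|y|\ge 2|x|$, and using dominated convergence together with $V=\int e^{n\eta_0}\,dy<\infty$, one obtains
\[
v(x)=-\frac{(n-1)!\,V}{\gamma_n}\log|x|+o(\log|x|)\quad\text{as }|x|\to\infty,
\]
so $\alpha=(n-1)!V/\gamma_n$. This proves the first claim. The ``if'' direction of the second claim is a direct check: on the spherical solution one computes $\eta_0+2\log|x|$ is bounded, hence $P$ (being a polynomial with $|P|\lesssim 1+|\log|x||$ at infinity) must be constant.

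The real work is the ``only if'' direction: assuming $P\equiv c$, we must identify $\eta_0=v+c$ with one of the standard bubbles. My plan is to apply the method of moving planes in integral form (Chen--Li--Ou style) to the equation
\[
\eta_0(x)=\frac{(n-1)!}{\gamma_n}\int_{\R^n}\log\frac{|y|}{|x-y|}\,e^{n\eta_0(y)}\,dy+c,
\]
with the asymptotic $\eta_0(x)=-\alpha\log|x|+O(1)$ from the first part playing the role of an integrability/decay condition at infinity. Comparing $\eta_0$ with its reflection $\eta_0^\lambda$ across a hyperplane $\{x_1=\lambda\}$, the kernel $\log\frac{|y|}{|x-y|}$ has the right monotonicity property on a half-space (the reflected kernel is strictly smaller on the unreflected side), and the tail decay coming from the asymptotic lets one start the moving plane from $\lambda=+\infty$. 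Sliding $\lambda$ down until it stops yields radial symmetry and monotonicity of $\eta_0$ about some point $x_0$. A Pohozaev-type identity, derived by testing the equation against $(x-x_0)\cdot\nabla\eta_0$ (which is legitimate in view of the decomposition $\eta_0=v+c$ and the smoothness of $v$), pins down $\alpha=2$ and hence $V=\Lambda_1/(n-1)!=|S^n|$, which in turn forces the one-parameter (scaling) freedom in \eqref{spherical}. The expected main obstacle is the moving-planes step, because the logarithmic kernel changes sign — one has to work with the \emph{difference} $\log\tfrac{|y|}{|x-y|}-\log\tfrac{|y^\lambda|}{|x-y^\lambda|}$, whose sign on the half-space is what makes the argument go through, and carefully control the contribution from $|y|\to\infty$ using the established decay of $v$.
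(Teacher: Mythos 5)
This theorem is not proved in the paper at all: it is imported verbatim from Hyder's work \cite{Ali} and used as a black box (the text says explicitly ``we are able to use the following recent result of Ali Hyder''). So there is no in-paper proof to compare your attempt against; what you have written is a sketch of the external result. That said, your outline does follow what is essentially the standard (and, as far as the cited literature goes, the actual) route: represent the right-hand side of \eqref{eqeta0} through the renormalized logarithmic kernel $\log\frac{|y|}{|x-y|}$ (the renormalization is genuinely needed, since $e^{n\eta_0}\in L^1$ does not give $\log(2+|\cdot|)e^{n\eta_0}\in L^1$), deduce that $P=\eta_0-v$ is $(-\Delta)^{n/2}$-harmonic and polynomial, read off $\deg P\le n-1$ from $P\in L_n(\R^n)$, and classify the ``normal'' solutions by moving planes in integral form. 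The asymptotics $v=-\alpha\log|x|+o(\log|x|)$ with $\alpha=(n-1)!V/\gamma_n$ and the ``if'' direction are fine.

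Two points in your sketch are genuinely incomplete. First, the Liouville step: from $(-\Delta)^{n/2}P=0$ you want $\operatorname{supp}\hat P\subset\{0\}$, but for $n$ odd the symbol $|\xi|^{n}$ is not smooth at the origin, so ``$|\xi|^n\hat P=0$ hence $\hat P$ is supported at $0$'' cannot be run as a naive multiplication of distributions; one must first justify that $(-\Delta)^{n/2}P$ is well defined for $P\in L_n(\R^n)$ and argue via testing against Schwartz functions whose Fourier transform vanishes near $0$ (this is exactly the technical content of \cite[Proposition 2.2]{Ali} that the present paper also relies on). Second, and more seriously, the endgame of the ``only if'' direction is not a proof: radial symmetry about $x_0$ together with $\alpha=2$ (equivalently $V=|S^n|$) does not by itself force the closed form \eqref{spherical} --- for a nonlocal operator there is no ODE uniqueness to invoke. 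The known arguments either run the method of moving \emph{spheres} (which outputs the exact formula directly) or compare the radially symmetric normal solution with the standard bubble through its integral representation; one of these must replace your final sentence. With those two repairs the strategy is viable, but as written the classification step has a real gap.
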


Indeed, if $\eta_0$ is not of the form \eqref{spherical}, then $\eta_0$ at infinity behaves like a logarithm plus a polynomial, only the former belonging to $L_s(\R^n)$ for $s$ small. This is in contradiction to the fact that $\eta_0\in L_s(\R^n)$ for every $s>0$. This argument is different from the one used in the even dimensional case, first introduced in \cite{RS} and then also applied in \cite{mar4} and other works.

In the case $n=1$ Theorem \ref{thmali} is not necessary because Da~Lio-Martinazzi-Rivi\`ere \cite{DLMR} proved that every function  $\eta_0\in L_1(\R)$ solving \eqref{eqeta0} for $n=1$ has necessarily the form \eqref{spherical}.

\medskip

It has to be mentioned that in even dimension the analog of Theorem \ref{main} was complemented in \cite{dru}, \cite{MS2} and \cite{str2} by a quantization result, saying that in case of blow-up
$$\Lambda=\int_\Omega |\nabla^\frac n2 u_\infty|^2 dx + L\Lambda_1\quad \text{for an integer }L>0.$$
In other words the energy loss in the weak limit is an integer multiple of the fixed quantity $\Lambda_1$. Although it is natural to expect this to hold true also in our non-local case, we remark that in the local case the proofs make abundant use of ODE techniques, which are not available when dealing with fractional Laplacians. On the other hand in the case of half-harmonic maps, precise energy quantization was obtained in \cite{DaLioBubbles}.

\subsection*{Notation}
The space $C^\alpha(\Omega) \equiv C^{\alpha_0,\tilde{\alpha}}(\Omega)$,  for $\alpha = \alpha_0 + \tilde{\alpha}$ with $\tilde{\alpha} \in (0,1]$, $\alpha_0 \in \N_0$, is the space of $\alpha_0$-times differentiable functions with $\alpha_0$th derivative H\"older continous of order $\tilde{\alpha}$.
We define the semi-norm
\[
 [f]_{C^{\alpha}(\Omega)}= \sup_{x\neq y \in \Omega} \frac{|\nabla^{\alpha_0} f(x) - \nabla^{\alpha_0}f(y)|}{|x-y|^{\tilde{\alpha}}},
\]
and the norm
\[
 \|f\|_{C^{\alpha}(\Omega)} := \sum_{k=0}^{\alpha_0} \|\nabla^{k}  f\|_{L^{\infty}(\Omega)} + [f]_{C^{\alpha}(\Omega)}.
\]

\section{Proof of Theorem \ref{main}}




\begin{proposition} If $\sup_k m_k\le C$ then up to a subsequence $u_k\to u_\infty$ in $C^{\ell}_{\loc}(\Omega)\cap C^{\frac{n-1}2}(\bar \Omega)$ for every $\ell>0$, where $u_\infty$ solves \eqref{eq0}.
\end{proposition}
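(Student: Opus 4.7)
The plan is to exploit the assumed $L^\infty$-bound $\sup_k\|u_k\|_{L^\infty(\Omega)}\le C$ to turn the right-hand side of \eqref{eq0} into a uniformly bounded forcing term, and then to run fractional regularity theory, first up to the boundary and then in the interior, to extract the desired convergent subsequence.

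First I would set $f_k:=\lambda_k u_k e^{\frac n2 u_k^2}$ and observe that, since $0<\lambda_k\le\bar\lambda$ and $|u_k|\le C$ on $\Omega$ (with $u_k\equiv 0$ on $\Omega^c$), we have $\|f_k\|_{L^\infty(\R^n)}\le \bar\lambda\, C\, e^{\frac n2 C^2}=:K$. Then I would invoke the boundary regularity theory for $(-\Delta)^{n/2}$ on domains (Grubb, as cited in Remark \ref{rmkreg}), which gives that solutions of \eqref{eq} with $f\in L^\infty(\Omega)$ and zero exterior data satisfy a uniform estimate of the form $\|d^{-n/2}u_k\|_{C^{\alpha}(\bar\Omega)}\le C'(\Omega,n)\,\|f_k\|_{L^\infty(\Omega)}$ for some $\alpha\in(0,1)$, and in particular $\|u_k\|_{C^{(n-1)/2,\alpha}(\bar\Omega)}\le C''(\Omega,n,K)$. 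Arzel\`a--Ascoli then provides a subsequence (not relabelled) with $u_k\to u_\infty$ in $C^{(n-1)/2}(\bar\Omega)$, and $u_\infty$ automatically vanishes on $\partial\Omega$ (more precisely $d^{-n/2}u_\infty\in C^{\alpha}(\bar\Omega)$).

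Next I would upgrade the convergence to $C^{\ell}_{\loc}(\Omega)$ for every $\ell$. Since $f_k$ is uniformly bounded in $L^\infty(\Omega)$ and $u_k$ is uniformly bounded in $L^\infty(\R^n)$ (recall $u_k=0$ outside $\Omega$), interior Schauder-type estimates for the non-local operator $(-\Delta)^{n/2}$ give uniform $C^{n/2+\alpha}$ bounds on any compact $K\Subset\Omega$. Bootstrapping on the equation (each step gains $n/2$ derivatives on the left and the nonlinearity $u_ke^{\frac n2 u_k^2}$ inherits the regularity of $u_k$ by the chain rule) then yields uniform $C^{\ell}(K)$ bounds for every $\ell$, and another diagonal Arzel\`a--Ascoli argument upgrades the subsequence to converge in $C^{\ell}_{\loc}(\Omega)$.

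Finally I would pass to the limit in the weak formulation \eqref{eqweak}. The uniform bound $\|u_k\|_X\le\sqrt{\Lambda+o(1)}$ and the reflexivity of $X(\Omega)$ yield (for a further subsequence) $u_k\rightharpoonup u_\infty$ in $X(\Omega)$; in particular $u_\infty\in X(\Omega)$ and the left-hand side of \eqref{eqweak} converges for every test function $\varphi\in X(\Omega)$. For the right-hand side, the uniform convergence $u_k\to u_\infty$ on $\bar\Omega$ together with $|u_k|\le C$ implies $\lambda_k u_k e^{\frac n2 u_k^2}\to \lambda_\infty u_\infty e^{\frac n2 u_\infty^2}$ uniformly on $\Omega$, whence the limit equation $(-\Delta)^{n/2}u_\infty=\lambda_\infty u_\infty e^{\frac n2 u_\infty^2}$ holds in the weak (hence, by the interior regularity already obtained, classical) sense.

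The main obstacle is the first step: producing a \emph{uniform} boundary H\"older estimate for fractional Laplacians of order $n/2$ with $n$ odd on a bounded smooth domain, since this is where the non-local nature of the operator is most delicate. Once this uniform estimate is in hand, the rest is standard elliptic bootstrap combined with weak compactness.
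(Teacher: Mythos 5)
Your proposal is correct and follows essentially the same route as the paper: the uniform $L^\infty$ bound on $f_k=\lambda_k u_ke^{\frac n2 u_k^2}$ feeds into the interior estimates of Lemma~\ref{la:LinftyRHSest} and Grubb's boundary regularity, after which Arzel\`a--Ascoli and a standard bootstrap/limit-passage finish the argument. The paper's proof is just a one-line citation of exactly these ingredients, which your write-up fills in correctly.
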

\begin{proof}
This follows from Lemma~\ref{la:LinftyRHSest}, from the estimates in \cite{gru} (compare also to \cite{RS}), and the theorem of Arzel\`a-Ascoli.
\end{proof}

We shall now assume that, up to a subsequence, $m_k\to \infty$ as $k\to\infty$  and we consider  $x_{k} \in {\Omega}$ so that 
\begin{equation}\label{eq:xkdef}
m_k \equiv \sup_{ \Omega}u_{k}=u_{k}(x_{k}) \to\infty \quad \text{as }k\to\infty.
\end{equation}

\subsection{Rescaling and Convergence}

\begin{lemma}\label{rk}
Let $r_k$ and $x_k$ be defined by \eqref{eq:rkdef} and \eqref{eq:xkdef} respectively. Then we have
$$\lim_{k\to \infty} \frac{\dist(x_{k},\partial \Omega)}{r_{k}}=+\infty$$
\end{lemma}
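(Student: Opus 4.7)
My plan is to argue by contradiction. Suppose, along a subsequence, $d_k:=\dist(x_k,\partial\Omega)\le R\,r_k$ for some finite $R>0$. The idea is to exploit the natural rescaling around $x_k$ combined with the boundary decay in Remark \ref{rmkreg} to force $m_k$ to stay bounded, contradicting $m_k\to\infty$. Set $\tilde u_k(y):=u_k(x_k+r_k y)$ on $\Omega_k:=r_k^{-1}(\Omega-x_k)$, extended by zero outside. Then $\tilde u_k\le\tilde u_k(0)=m_k$, $|\tilde u_k|\le m_k$, and
\[
(-\Delta)^{n/2}\tilde u_k(y) \;=\; r_k^n\lambda_k\tilde u_k(y)\, e^{\frac n2 \tilde u_k(y)^2}.
\]
Since $|t|e^{\frac n2 t^2}$ is nondecreasing on $[0,\infty)$ (its derivative equals $(1+nt^2)e^{\frac n2 t^2}\ge 0$), the defining relation \eqref{eq:rkdef} of $r_k$ yields
\[
\|(-\Delta)^{n/2}\tilde u_k\|_{L^\infty(\Omega_k)} \;\le\; r_k^n\lambda_k m_k e^{\frac n2 m_k^2} \;=\; \frac{2^n(n-1)!}{m_k}\longrightarrow 0.
\]

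Next, by Remark \ref{rmkreg} and \cite{gru}, $u_k/d^{n/2}$ extends continuously to $\bar\Omega$, and undoing the scaling gives a corresponding statement for $\tilde u_k/\tilde d_k^{n/2}$ on $\bar\Omega_k$, where $\tilde d_k(y):=\dist(y,\partial\Omega_k)$. Since $\partial\Omega$ is smooth, the rescaled boundary $\partial\Omega_k$ becomes increasingly flat near its closest point to $0$ as $k\to\infty$ (principal curvatures scale as $r_k\to0$), so the relevant boundary constants can be taken uniform in $k$. The key quantitative estimate I would seek is
\[
\tilde u_k(0) \;\le\; C\,\tilde d_k(0)^{n/2}\Big(\|(-\Delta)^{n/2}\tilde u_k\|_{L^\infty(\Omega_k)} + T_k\Big),
\]
where $T_k$ is a tail term reflecting the nonlocality of $(-\Delta)^{n/2}$, controlled by the global bound $\|(-\Delta)^{n/2}\tilde u_k\|_{L^1(\Omega_k)}=\|f_k\|_{L^1(\Omega)}\le C$. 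This $L^1$ bound follows from the energy identity $\|u_k\|_X^2=\int_\Omega u_k f_k\,dx\le C$, the pointwise positivity $u_k f_k\ge 0$, and a splitting $\int|f_k|=\int_{|u_k|\le 1}|f_k|+\int_{|u_k|>1}|f_k|$, the second integral being dominated by $\int u_k f_k$. Under the standing assumption $\tilde d_k(0)=d_k/r_k\le R$, the displayed inequality together with the first paragraph forces $m_k\le C'<\infty$, contradicting $m_k\to\infty$.

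The main obstacle is establishing the above boundary estimate with a constant uniform in $k$, since $\Omega_k$ varies with $k$ and has diverging diameter. My approach would be to flatten $\partial\Omega_k$ near its closest point to $0$, which is possible smoothly because $\partial\Omega\in C^\infty$, reducing to a half-space model for $(-\Delta)^{n/2}$. On the half-space, the Green function has explicit decay, permitting a clean split of the representation integral for $\tilde u_k(0)$ into a local piece dominated by the $L^\infty$ smallness from Step 1 and a tail piece dominated by the $L^1$ control of $f_k$; the contribution of the thin layer where the straightening differs from the exact half-space is absorbed into the tail by the smallness of the boundary curvatures. Once the uniform boundary estimate is in place, the three-step argument closes.
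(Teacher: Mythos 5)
Your contradiction setup, the observation that $\|(-\Delta)^{n/2}\tilde u_k\|_{L^\infty(\Omega_k)}\le 2^n(n-1)!/m_k\to 0$, and the $L^1$ bound on $f_k$ are all correct, but the step you yourself flag as the ``main obstacle'' is a genuine gap, not a technicality, and the inequality you propose is not of the right form. First, the local term $C\,\tilde d_k(0)^{n/2}\|(-\Delta)^{n/2}\tilde u_k\|_{L^\infty(\Omega_k)}$ cannot control the near-field contribution: for the half-space model the Green function of $(-\Delta)^{n/2}$ behaves like $x_n^{n/2}y_n^{n/2}|x-y|^{-n}$ away from the diagonal (with a logarithmic singularity on it, since $2s=n$), and $\int_{\{y_n>0,\,|y|>1\}} y_n^{n/2}|y|^{-n}\,dy=\infty$; so pairing the kernel against a constant bound on $f$ over the expanding domains $\Omega_k$ diverges, and one is forced into a genuinely quantitative near/far splitting with two-sided pointwise Green function bounds. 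Second, and more seriously, for $s=n/2>1$ the restricted fractional Laplacian does not obey a maximum principle and its Green function on a general smooth domain need not be positive, so the representation-formula-plus-kernel-estimate route is not available off the shelf; moreover ``flattening the boundary'' does not conjugate a nonlocal operator into its half-space model up to errors ``absorbed by the smallness of the curvatures'' --- the error is itself a nonlocal operator of the same order. Note also that Remark \ref{rmkreg} is purely qualitative (no uniform norm control), and the quantitative version $|u_k(x)|\le C(\Omega)d(x)^{n/2}\|f_k\|_{L^\infty(\Omega)}$ applied directly at $x=x_k$ only yields $m_k\aleq e^{nm_k^2/4}$, which is vacuous; so everything hinges on estimates you have not supplied and that do not currently exist in the literature for this operator.

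The paper avoids boundary regularity entirely by normalizing: set $v_k:=u_k(x_k+r_k\cdot)/m_k$, so $|v_k|\le 1$, $v_k(0)=1$. Since $\|(-\Delta)^{n/4}v_k\|_{L^2}=m_k^{-1}\|(-\Delta)^{n/4}u_k\|_{L^2}\to 0$, Sobolev embedding gives $\|\nabla v_k\|_{L^n(\R^n)}\to 0$; interior estimates (Lemma \ref{la:LinftyRHSest}) plus the $L^\infty$-smallness of the right-hand side give equicontinuity on balls, so $v_k\to v$ locally uniformly with $v(0)=1$ and $\nabla v=0$, i.e.\ $v\equiv 1$. But if $\dist(x_k,\partial\Omega)/r_k$ stays bounded, then $v_k\equiv 0$ on a fixed set of positive measure in some $B_{R_1}$, which together with $\nabla v_k\to 0$ in $L^n$ forces $v_k\to 0$ in $L^n(B_{R_1})$, hence $v\equiv 0$ --- a contradiction. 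I would recommend you adopt this Poincar\'e-type argument (or an equivalent capacity argument): it uses only interior estimates and the vanishing of $v_k$ outside $\Omega_k$, and completely sidesteps the uniform boundary Green function estimates your sketch would require.
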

\begin{proof}
For the sake of contradiction, we assume that $$\lim_{k\to \infty} \frac{\dist(x_{k},\partial \Omega)}{r_{k}}<\infty.$$
Let us assume that 
\begin{equation}\label{limrk}
0 < \lim_{k\to \infty} \frac{\dist(x_{k},\partial \Omega)}{r_{k}} < \infty.
\end{equation}
If the above limit vanishes then the argument is similar.
We set $\Omega_{k}=\{r_{k}^{-1}(x-x_{k});x\in \Omega\}$. Then
$$v_{k}(x):=\frac{u_k(r_k x+x_k)}{m_k}$$
satisfies
\begin{equation}
\begin{cases}
(-\Delta)^{\frac{n}{2}}v_{k}= \frac{2^n(n-1)!}{m_{k}^{2}}v_{k}e^{\frac{n}{2}m_{k}^{2}(v_{k}^{2}-1)} \quad &\text{ in $\Omega_{k}$}\\
v_{k}\in X(\Omega_k). \quad &
\end{cases}
\end{equation}
Notice that,
\[
 \vrac{(-\Delta)^{\frac{n}{4}}v_{k}}_{L^2(\R^n)} = (m_k)^{-1} \vrac{(-\Delta)^{\frac{n}{4}}u_{k}}_{L^2(\R^n)} \xrightarrow{k \to \infty} 0.
\]
Then by the Sobolev embedding, Proposition~\ref{pr:Sobolev} using also \eqref{Rieszprop}, the boundedness of the Riesz transform, and that $\laps{1} = \lapms{\frac{n}{2}-1} (-\lap)^{\frac{n}{4}}$,
\begin{equation}\label{gradvk}
\begin{split}
 &\vrac{\nabla v_{k}}_{L^n(\R^n)}= c\vrac{\Rz \laps{1} v_{k}}_{L^n(\R^n)} \aleq \vrac{ \laps{1} v_{k}}_{L^n(\R^n)}\\
 \aleq & \vrac{\lapms{\frac{n}{2}-1} (-\lap)^{\frac{n}{4}} v_{k}}_{L^n(\R^n)} \leq  C\vrac{(-\Delta)^{\frac{n}{4}}v_{k}}_{L^2(\R^n)} \xrightarrow{k \to \infty} 0.
\end{split}
 \end{equation}
On the other hand, by \eqref{limrk} there exists some $R > 0$ so that $B_{4R}(0) \subset \Omega_k$ for all $k \in \N$. Then
\[
\vrac{(-\Delta)^{\frac{n}{2}}v_{k}}_{L^\infty(B_{3R}(0))} \xrightarrow{k \to \infty} 0.
\]
This implies that for any $\alpha \in (0,n)$, Lemma~\ref{la:LinftyRHSest},
\[
 [v_{k}]_{C^{\alpha}(B_{2R}(0))} \le  C.
\]
So recalling that $|v_k|\le 1$, by Arzel\`a-Ascoli (up to a subsequence) we have that $v_k\to v$ in $C^{n-1}(B_R)$ for some $v$. 
Since at the same time $\nabla v_k \to 0$ in $L^n(\R^n)$ and $v_k (0) = 1$, we know that $v \equiv 1$ in $B_R$.

On the other hand, take $R_1 > R$ so that $B_{\frac{R_1}{2}}(0) \cap \partial \Omega_k \neq 0$ for all but possibly finitely many $k \in \N$. Using \eqref{gradvk}, and noticing that $v_k \equiv 0$ on a fixed part of positive measure of $B_{R_1}$, we know that $v_k \to 0$ in $L^n(B_{R_1}(0))$, hence $v \equiv 0$ in $B_R$. This contradicts $v \equiv 1$.
\end{proof}


\begin{lemma}\label{diff} Let $m_k$ be as in \eqref{eq:xkdef}. Then we have
\begin{equation}
u_{k}(x_{k}+r_{k}x)-m_{k}\to 0 \text{ in } C^{n-1}_{\loc}(\mathbb{R}^{n}) \text{ as }k\to \infty.
\end{equation}
\end{lemma}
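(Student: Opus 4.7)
My plan is to reduce the statement to showing $v_k - 1 = o(1/m_k)$ in $C^{n-1}_{\loc}(\R^n)$, where $v_k(x) := u_k(x_k+r_kx)/m_k$, so that $w_k(x) := u_k(x_k+r_kx) - m_k = m_k(v_k-1)$.

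As a first step I would show $v_k \to 1$ in $C^{n-1}_{\loc}(\R^n)$. The PDE
\begin{equation*}
(-\Delta)^{n/2} v_k = \frac{2^n(n-1)!}{m_k^2}\, v_k\, e^{\frac{n}{2}m_k^2(v_k^2-1)},
\end{equation*}
combined with $|v_k|\le 1$ (which forces $v_k^2-1\le 0$ and so $e^{\frac n2 m_k^2(v_k^2-1)}\le 1$), gives the global estimate $\|(-\Delta)^{n/2} v_k\|_{L^\infty(\R^n)} \le 2^n(n-1)!/m_k^2 \to 0$. Together with $\|v_k\|_{L^\infty}\le 1$, Lemma~\ref{la:LinftyRHSest} yields $[v_k]_{C^\alpha(B_R)} \le C(R)$ for every $\alpha\in(0,n)$. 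Using in addition $\|\nabla v_k\|_{L^n(\R^n)} \le C\|u_k\|_X/m_k \to 0$ (as in the proof of Lemma~\ref{rk}) and Arzelà-Ascoli, a subsequence of $v_k$ converges in $C^{n-1}_{\loc}$ to a constant, which the normalization $v_k(0)=1$ identifies as $1$.

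The substantive content of Lemma~\ref{diff} is the upgrade from $v_k-1 \to 0$ to the sharper rate $v_k-1 = o(1/m_k)$ in $C^{n-1}_{\loc}$. For this I would use the improved PDE for $w_k$,
\begin{equation*}
(-\Delta)^{n/2} w_k = \frac{2^n(n-1)!\,v_k}{m_k}\,e^{\frac{n}{2}m_k^2(v_k^2-1)},\qquad \|(-\Delta)^{n/2} w_k\|_{L^\infty(\R^n)}\le \frac{2^n(n-1)!}{m_k}\to 0,
\end{equation*}
together with the structural data $w_k(0)=0$, $w_k\le 0$ on $\R^n$, and $\nabla w_k(0)=0$ (the origin is a smooth interior maximum). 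The natural route is to apply an interior Schauder-type bound (a version of Lemma~\ref{la:LinftyRHSest}) to $w_k$ on $B_{2R}$, and then use the pinning at the origin together with a Taylor expansion to convert a decaying bound on the top-order Hölder seminorm of $w_k$ into a decaying bound on the full $C^{n-1}(B_R)$ norm.

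The main obstacle is precisely this last implication: $w_k \equiv -m_k$ outside $\Omega_k$, so any global tail norm such as $\|w_k\|_{L_s(\R^n)}$ blows up linearly in $m_k$ and cannot be used as the control term in a naive interior elliptic estimate. A cleaner alternative that sidesteps this issue, and what I expect to be the route taken in the paper, is to invoke the uniform local $C^{n-1}$ bound on $\eta_k := m_k w_k$ that is the content of Propositions~\ref{la:ulapuest} and~\ref{gradest}; together with the identity $w_k = \eta_k/m_k$ and $m_k \to \infty$, this delivers $\|w_k\|_{C^{n-1}(B_R)} \to 0$ for every $R>0$ at once.
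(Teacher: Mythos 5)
Your first step ($v_k\to 1$ in $C^{n-1}_{\loc}$) is fine and mirrors the argument inside Lemma~\ref{rk}, but, as you yourself observe, the entire content of the lemma is the rate $v_k-1=o(1/m_k)$, and neither of the two routes you offer for that step closes. The ``cleaner alternative'' is circular: the uniform $C^{n-1}$ bounds on $\eta_k$ in Propositions~\ref{gradest} and~\ref{pr:boundedness} are obtained by applying Proposition~\ref{la:ulapuest} with $\rho=Rr_k$ and then using the inequality $m_k\le 2u_k$ on $B_{Rr_k}(x_k)$ to pass from $\int|u_k(-\Delta)^{s/2}u_k|$ to $m_k\int|(-\Delta)^{s/2}u_k|$ --- and that inequality is itself derived from Lemma~\ref{diff}, so those propositions are not available here. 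The Schauder/Taylor route also fails as you describe it: applying Lemma~\ref{la:LinftyRHSest} to $\tilde u_k:=u_k(x_k+r_k\cdot)$ gives $[\nabla^\ell\tilde u_k]_{C^{0,\alpha}(B_R)}\le C\|(-\Delta)^{n/4}\tilde u_k\|_{L^2(\R^n)}+C\|(-\Delta)^{n/2}\tilde u_k\|_{L^\infty(\Omega_k)}$; the inhomogeneous term is $O(1/m_k)$, but the homogeneous term is only bounded by $C(\Lambda)$ and does not decay. So you obtain uniform, not decaying, $C^{n-1,\alpha}$ bounds on $\tilde u_k-m_k$, and the pinning $w_k(0)=0$, $\nabla w_k(0)=0$ cannot by itself convert a bounded H\"older seminorm into a vanishing one.

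The missing ingredient, and the mechanism of the paper's proof, is a compactness/Liouville step showing that the gradient itself tends to zero locally. One shows that $g_k:=(-\Delta)^{1/2}\tilde u_k$ is bounded in $L^n(\R^n)$, that any weak limit $g$ satisfies $(-\Delta)^{\frac{n-1}{2}}g=0$ in $\R^n$ (because $\|(-\Delta)^{n/2}\tilde u_k\|_{L^\infty(\Omega_k)}\le C/m_k\to 0$ and $\Omega_k$ exhausts $\R^n$ by Lemma~\ref{rk}), and that an entire $(n-1)$-harmonic function in $L^n(\R^n)$ vanishes identically, by elliptic estimates and scaling. Hence $\nabla\tilde u_k=\Rz g_k\rightharpoonup 0$ in $L^n(\R^n)$, which upgrades by compact embedding to $\nabla\tilde u_k\to 0$ strongly in $L^2(B_R)$. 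Combining this with the uniform $C^{n-1,\alpha}(B_R)$ bounds and Arzel\`a--Ascoli identifies the $C^{n-1}(B_R)$ limit of $\tilde u_k-m_k$ as a function with vanishing gradient and value $0$ at the origin, i.e.\ the zero function. Without some argument of this kind showing that the limiting gradient vanishes, your proposal does not establish the lemma.
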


\begin{proof}
Let $\tilde{u}_k:=u_{k}(x_{k}+r_{k}x)$. Then $\tilde{u}_k \in C^0_c(\R^n)\cap X(\Omega_k)$ and
\[
 \sup_{x \in \R^n} |\tilde{u}_k(x)| = \tilde{u}_k(0) = m_k \in [0,\infty).
\]
As above by Sobolev embedding, $\tilde{u}_k \in W^{1,n}_0(\Omega_k)$
\[\begin{split}
 \limsup_{k\to\infty}\|\nabla  \tilde{u}_k\|_{L^n(\R^n)} &\le C \limsup_{k\to\infty} \|(-\lap)^\frac n4 \tilde{u}_k\|_{L^2(\R^n)} \\
&= C\limsup_{k\to\infty}\|(-\lap)^\frac n4 u_k\|_{L^2(\R^n)}\\
& \leq C(\Lambda).
\end{split}
\]
and from \eqref{Rieszprop-1}, \eqref{Rieszprop0} and \eqref{Rieszprop} below for $k$ large enough we get
\begin{equation}\label{gkbound}
\begin{split}
\|\laps{1} \tilde{u}_k\|_{L^n(\R^n)} &=  \sum_{i=1}^n\|\Rz_i\Rz_i \laps{1} \tilde{u}_k\|_{L^n(\R^n)}\\
&\le C \sum_{i=1}^n\|\Rz_i \laps{1} \tilde{u}_k\|_{L^n(\R^n)}  \\
&\le C \|\nabla  \tilde{u}_k\|_{L^n(\R^n)} \\
& \leq C(\Lambda).
\end{split}
\end{equation}
Notice that 
$$|(-\Delta)^\frac n2 \tilde{u}_k|\le \frac{C}{m_k}\quad \text{in }\Omega_k.$$
Finally, by Lemma~\ref{rk} for any $\varphi \in C_c^\infty(\R^3)$, for all sufficiently large $k$ depending on the size of the support of $\varphi$,
\begin{equation}\label{eq:pdeforwk}
 \bigg|\int_{\R^n} (-\lap)^{\frac{n}{4}} \tilde{u}_k\, (-\lap)^{\frac{n}{4}} \varphi\, dx\bigg| \leq C\ \frac{1}{m_k} \int_{\R^3} |\varphi|\,dx.
\end{equation}
Let $g_k := \laps{1} \tilde{u}_k$, bounded in $L^n(\R^n)$, according to \eqref{gkbound}.
There is a weakly convergent subsequence $g_k \rightharpoonup g$ in $L^n(\R^n)$. Moreover, we have for any $\varphi \in C_c^\infty(\R^n)$, by \eqref{eq:pdeforwk}
\[
  \int_{\R^n} g\, (-\lap)^\frac{n-1}{2} \varphi \, dx= \lim_{k \to \infty} \int_{\R^n} g_k\, (-\lap)^\frac{n-1}{2} \varphi \, dx=  \lim_{k \to \infty} \int_{\R^n} (-\lap)^\frac n4 \tilde{u}_k\, (-\lap)^\frac n4 \varphi\, dx \xrightarrow{k \to \infty} 0.
\]
Consequently, $g \in C^\infty(\R^n)\cap L^n(\R^n)$, and pointwise $(-\lap)^\frac{n-1}{2}g \equiv 0$. This implies that $g\equiv 0$. Indeed by elliptic estimates (see e.g. \cite[Proposition 4]{mar1}) and H\"older's inequality it follows that 
$$\|g\|_{L^\infty(B_1)}\le C\|g\|_{L^1(B_2)}\le \tilde C \|g\|_{L^n(B_2)},$$
which scaled gives
$$\|g\|_{L^\infty(B_R)} \le\tilde C R^{-1}\|g\|_{L^n(B_{2R})}\to 0\quad \text{as }R\to\infty.$$

So we have obtained, that $\laps{1} \tilde{u}_k \rightharpoonup 0$ in $L^n(\R^n)$. Then, using \eqref{Rieszprop} and \eqref{eq:RiezPI} we also have
$$\nabla \tilde{u}_k = \Rz \laps{1} \tilde{u}_k \rightharpoonup 0\quad \text{in }L^n(\R^n).$$
Since $\tilde{u}_k$ is uniformly bounded in $H^\frac n2(\R^n)$, since $n\geq 3$, we also have strong convergence in $W^{1,2}_{\loc}(\R^n)$. In particular up to choosing a subsequence, for any $R > 1$, 
\begin{equation}\label{grad0}
\nabla \tilde{u}_k \to 0 \quad \text{in }L^2(B_R).
\end{equation}
On the other hand, observe the following: For any $R > 1$, for all large $k \in \N$,  we have  $B_{2R}\subset \Omega_k$. From \eqref{eq:pdeforwk},  Lemma~\ref{la:LinftyRHSest} we obtain
\[
 \|\nabla \tilde{u}_k\|_{C^{n-2,\alpha}(B_R)} \leq C
\]
for a uniform constant $C$ and $\alpha\in (0,1)$.

Since $\tilde{u}_k(0) = m_k$, we have
\[
\|\tilde{u}_k-m_k\|_{L^\infty(B_R)} \leq  \|\nabla \tilde{u}_k\|_{L^\infty(B_R)} \leq C,
\]
and consequently we have shown that
\[
\|\tilde{u}_k-m_k\|_{C^{n-1,\alpha}(B_R)} \leq C
\]
Now Arzel\`a-Ascoli gives (up to a further subsequence) $C^{n-1}(B_R)$-convergence of $\tilde{u}_k-m_k$, and using \eqref{grad0} we have that $\tilde{u}_k - m_k \to 0$  in $C^{n-1}(B_R)$.  Since $R$ is arbitrary the proof is complete.
\end{proof}

\subsection{Gradient-type estimates}
Note that from \eqref{energy}
\[
 \limsup_{k \to \infty} \|u_k \laps{n} u_k\|_{L^1(\Omega)} \leq \Lambda.
\]
Moreover, as in \cite[Proof of Lemma 5]{mar4}, we know that for the Orlicz space $L \log^{\frac{1}{2}} L(\Omega)$,
\[
 \limsup_{k \to \infty} \|\laps{n} u_k \|_{L\log^{\frac{1}{2}} L(\Omega)} \leq C(\Lambda,\Omega).
\]
We will now need the following crucial estimate applied to $u=u_k$ and $\rho=Rr_k$ for a given $R>0$ and $k$ so large that $B_{10\rho} (x_k) \subset \Omega$ (compare to Lemma \ref{rk}).

\begin{proposition}\label{la:ulapuest}
Let $\Omega$ be a smoothly bounded domain, and consider $u \in X(\Omega)$ such that $\laps{n} u = f$ weakly in $\Omega$ for some $f \in L\log^{\frac{1}{2}} L(\Omega)\cap L^\infty(\Omega)$. Assume moreover that
\begin{equation}\label{eq:uniformbound}
 \vrac{u\, f}_{L^1(\Omega)} +  \|f\|_{L\log^{\frac{1}{2}} L(\Omega)} + \vrac{(-\lap)^{\frac{n}{4}} u}_{L^2(\R^n)} \leq C_1.
\end{equation}
Then for a constant depending $C_2$ depending only on $C_1$ and $s \in (0,n)$ we have 
\[
 \sup_{B_{4\rho}(x_0) \subset \Omega} \rho^{s-n} \int_{B_{\rho}(x_0)}|u\laps{s} u|\, dx \leq C_2
\]
\end{proposition}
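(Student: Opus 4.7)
The plan is to prove the scale-invariant weak-type estimate
$$\|u \laps{s} u\|_{L^{(n/s,\infty)}(B_\rho(x_0))} \leq C_2,$$
which, since $L^{(n/s,\infty)}(B_\rho)$ embeds continuously into $L^1(B_\rho)$ with embedding constant $C |B_\rho|^{1 - s/n} \sim \rho^{n-s}$, immediately yields the conclusion. The constant $C_2$ must not depend on $\rho$, so the whole argument must be intrinsically scale-invariant; this forces the use of Lorentz/Orlicz endpoint spaces rather than plain $L^p$.

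To produce this weak bound, I would first localize the right-hand side. Fix a cutoff $\theta \in C_c^\infty(B_{3\rho}(x_0))$ with $\theta \equiv 1$ on $B_{2\rho}(x_0)$, and split $f = \theta f + (1-\theta)f$. The tail $\lapms{n-s}((1-\theta)f)$ is smooth on $B_\rho$ and controlled there by $\|f\|_{L^1(\Omega)}$, which in turn is dominated by $\|f\|_{L\log^{1/2}L}$; this part contributes harmless terms. On the localized piece, the formal identity
$$u\,\laps{s} u \;=\; u\,\lapms{n-s}(\theta f) \;=\; \bigl(\lapms{n/2} \laps{n/4} u\bigr)\,\lapms{n-s}(\theta f) \;+\; \lapms{n-s}\bigl(u\,\theta f\bigr) \;+\; E$$
is the heart of the proof, with $E$ collecting the commutator between $\lapms{n-s}$ and multiplication by $u$ (which is where the decomposition ``moves'' $u$ inside the potential).

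Each of the three pieces is then estimated in $L^{(n/s,\infty)}(B_\rho)$. The middle term is the easiest: since $\|u\,\theta f\|_{L^1(\R^n)} \leq \|uf\|_{L^1(\Omega)} \leq C_1$, the weak Hardy--Littlewood--Sobolev inequality gives $\lapms{n-s}(u\,\theta f) \in L^{(n/s,\infty)}(\R^n)$ with the correct bound. The commutator $E$ is absorbed by the commutator machinery set up in Section~\ref{sec:borderline}, exploiting that $\laps{n/4} u \in L^2$ and $\theta f \in L \log^{1/2} L$. The first product is the genuinely borderline term: the factor $\lapms{n/2} \laps{n/4} u$ sits at the failing endpoint of the Sobolev embedding $L^2 \not\hookrightarrow L^\infty$, and $\lapms{n-s}$ alone does not improve $L\log^{1/2}L$ sufficiently. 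The point is to pair these two borderline quantities through the endpoint estimates of Propositions~\ref{borderline1} and~\ref{borderline2}, in which the logarithmic loss on one factor is exactly compensated by the $\log^{1/2}$ gain on the other.

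Two technical points remain. The formal identity $u = \lapms{n/2} \laps{n/4} u$ is not literally valid for a generic $u \in X(\Omega)$, and must be justified by approximating $u$ in $X(\Omega)$ by functions in $C_c^\infty(\R^n)$ and passing to the limit in each of $A$, $B$, $E$, relying on the density and convergence statements in the appendix; the auxiliary hypothesis $f \in L^\infty(\Omega)$ is used only to make this approximation clean and does not enter the final constant. I expect the main obstacle to be precisely the borderline bound on the first product: the whole argument hinges on choosing target Lorentz-type spaces so that the two endpoint failures (one from $\lapms{n/2}$ on $L^2$, one from $\lapms{n-s}$ on $L\log^{1/2}L$) cancel rather than compound upon multiplication, which is the structural reason why $L\log^{1/2}L$, and not merely $L^1$, is the right assumption on $f$.
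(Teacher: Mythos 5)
Your overall architecture --- reduction to a uniform bound in $L^{(\frac ns,\infty)}(B_\rho)$ via the Lorentz--H\"older pairing, the representation $u=\lapms{\frac n2}(-\lap)^{\frac n4}u$ paired with $\lapms{n-s}(\theta f)$ through the borderline Propositions~\ref{borderline1} and~\ref{borderline2}, weak Hardy--Littlewood--Sobolev on $\lapms{n-s}(uf)$ using $\|uf\|_{L^1}\le C_1$, and a smooth approximation in which $f\in L^\infty$ enters only qualitatively --- is the paper's. But the way you wire the pieces together has a genuine gap, concentrated in the far-field term. After splitting $f=\theta f+(1-\theta)f$ you declare $u\,\lapms{n-s}((1-\theta)f)$ harmless on $B_\rho$ because $\lapms{n-s}((1-\theta)f)$ is smooth there and controlled by $\|f\|_{L^1(\Omega)}$. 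The bound you actually get is $\|\lapms{n-s}((1-\theta)f)\|_{L^\infty(B_\rho)}\lesssim\rho^{s-n}\|f\|_{L^1(\Omega)}$, and since $u$ lies only in $H^{\frac n2}\hookrightarrow BMO$ and not in $L^\infty$ (uniformly in $C_1$), multiplying by $u$ and measuring in $L^{(\frac ns,\infty)}(B_\rho)$ costs at best $\|u\|_{L^{\frac ns}(\R^n)}$, leaving a bound of order $\rho^{s-n}$ that blows up for small balls; the estimate must be uniform over all $B_{4\rho}\subset\Omega$. This far part is exactly where the commutator must act: writing $u\,\lapms{n-s}((1-\theta)f)=[u,\lapms{n-s}]((1-\theta)f)+\lapms{n-s}(u(1-\theta)f)$, the second term is absorbed by the weak HLS bound on $\|uf\|_{L^1}$ (using the positivity of the Riesz kernel), while the first is the \emph{disjoint-support} commutator of Proposition~\ref{la:disjcommutator}, whose dyadic decomposition subtracts averages of $u$ on annuli and converts the $BMO$-type control of $u$ into summable geometric decay.

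By contrast, your displayed identity places the commutator on the localized piece and is degenerate as written: the first summand $(\lapms{\frac n2}(-\lap)^{\frac n4}u)\,\lapms{n-s}(\theta f)$ is literally $u\,\lapms{n-s}(\theta f)$, i.e.\ the left-hand side itself, so the identity forces $\lapms{n-s}(u\,\theta f)+E=0$ and carries no information. Moreover a commutator $[u,\lapms{n-s}](\theta f)$ on the near part has no disjoint supports, and at this endpoint ($u\in BMO$, $\theta f$ merely in $L^1$ or $L\log^{\frac12}L$, weak-$L^{\frac ns}$ target) the commutator estimate is false without disjoint supports (see the remark preceding Proposition~\ref{la:disjcommutator} and \cite{DLZ}). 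The near term is instead handled directly, with no commutator at all, by the two borderline propositions applied to the product $\lapms{\frac n2}((-\lap)^{\frac n4}u)\cdot\lapms{n-s}(\theta f)$. So the correct assembly is: borderline product estimates on the near part, disjoint-support commutator plus weak HLS on the tail. Your remaining points --- the mollification, the role of $f\in L^\infty$ through the boundary behaviour of $u$, and the pairing of the two endpoint failures --- are in order.
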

\begin{proof}
We will use the Lorentz spaces $L^{(p,q)}$, for which we refer the reader to the appendix. Using the H\"older-type inequality (see \cite{one})
$$\|gh\|_{L^1(\Omega)}\le \|g\|_{L^{(\frac{n}{n-s},1)}(\Omega)}\, \|h\|_{L^{(\frac{n}{s},\infty)}(\Omega)},$$
we get (for $B_\rho=B_\rho(x_0)$, to simplify the notation)
\[\begin{split}
\rho^{s-n} \int_{B_{\rho}}|u\laps{s} u| dx &\le \rho^{s-n} \|\chi_{B_\rho}\|_{L^{(\frac{n}{n-s},1)}(B_\rho)}  \vrac{u\laps{s} u}_{L^{(\frac{n}{s},\infty)}(B_\rho)}\\
&= C  \vrac{u\laps{s} u}_{L^{(\frac{n}{s},\infty)}(B_\rho)},
\end{split}\]
so that it remains to show the bound
\[
\sup_{B_{4\rho} \subset \Omega} \vrac{u\laps{s} u}_{L^{(\frac{n}{s},\infty)}(B_\rho)} \leq C_2.
\]
For $\eps > 0$ we denote with $u^\eps \in C_c^\infty(\R^n)$ the usual mollification.

Consider now a cut-off function $\theta_{B_1}\in C^\infty(B_2)$, $\theta_{B_1}\equiv 1$ in $B_1$ and $0\le \theta_{B_1}\le 1$ everywhere. Set $\theta_{B_{2\rho}}:=\theta_{B_1}(\cdot/2\rho)\in C^\infty_c(B_{4\rho})$.
Then since $u^\eps \in C^\infty_c(\R^n)$ we have for $s \in (0,n)$ pointwise in $B_\rho$:
\begin{align*}
 |u\laps{s} u^\eps| = &| u\lapms{n-s}\laps{n} u^\eps| \\
 \le & |u\lapms{n-s}(\theta_{B_{2\rho}}\laps{n} u^\eps)|+|u\lapms{n-s}((1-\theta_{B_{2\rho}})\laps{n} u^\eps)|\\ 
 \le&|u\lapms{n-s}(\theta_{B_{2\rho}}\laps{n} u^\eps)|\\
&+|u\lapms{n-s}((1-\theta_{B_{2\rho}})\laps{n} u^\eps)-\lapms{n-s}(u (1-\theta_{B_{2\rho}})\laps{n} u^\eps)|\\
&+|\lapms{n-s}(u \laps{n} u^\eps)|\\
=:& I + II + III.
\end{align*}
By Proposition~\ref{borderline1} and Proposition~\ref{borderline2}, using that $u = \lapms{\frac{n}{2}}(-\lap)^{\frac{n}{4}} u$ we infer
\begin{equation}\label{eq:estI}
 \vrac{I}_{L^{(\frac {n}{s},\infty)}(\R^n)} \aleq \vrac{(-\lap)^\frac n4 u}_{L^2(\R^n)} \vrac{\laps{n} u^\eps}_{L\log^{\frac{1}{2}} L(B_{4\rho)}} .
\end{equation}
From the disjoint-support commutator estimate, see Proposition~\ref{la:disjcommutator}, we have
\begin{equation}\label{eq:estII}
\vrac{II}_{L^{(\frac {n}{s},\infty)}(B_\rho)} \aleq \| (-\lap)^{\frac{n}{4}} u \|_{L^2(\R^n)}^2.
\end{equation}
Since the support of $u$ is contained in $\Omega$, by the Sobolev inequality
\begin{equation}\label{eq:estIII}
 \vrac{III}_{L^{(\frac{n}{s},\infty)}(\R^n)} \aleq \vrac{u \laps{n} u^\eps}_{L^1(\Omega)}
\end{equation}

Combining the estimates \eqref{eq:estIII}, \eqref{eq:estI}, \eqref{eq:estII} we arrive at
\[
\begin{split}
 \|u\laps{s} u^\eps\|_{L^{(\frac{n}{2},\infty)}(B_\rho)} &\aleq \|u \laps{n} u^\eps \|_{L^1(\Omega)} + \|(-\lap)^{\frac{n}{4}} u\|_{L^2(\R^n)}\ \|\laps{n} u^\eps\|_{L\log^{\frac{1}{2}} L(B_{4\rho})} \\
 &\quad + \| (-\lap)^{\frac{n}{4}} u\|_{L^2(\R^n)}^2\\
 &\overset{\eqref{eq:uniformbound}}{\leq} 
\|u \laps{n} u^\eps \|_{L^1(\Omega)} + C_1\ \|\laps{n} u^\eps\|_{L\log^{\frac{1}{2}} L(B_{4\rho})} + (C_1)^2.
 \end{split}
 \]
It remains to take $\eps \to 0$, but some care is needed, since $\laps{n} u$ is in general not a function, but a distribution.
 
Firstly, since $B_{4\rho} \subset \Omega$, for $\eps < \rho$ we have that
\[
 \laps{n} u^\eps = (\laps{n} u)^\eps \text{\quad in } B_{4\rho}.
\]
In particular, for $\eps < \rho$
\[
 \|\laps{n} u^\eps\|_{L\log^{\frac{1}{2}} L(B_{4\rho})} \aleq \|\laps{n} u\|_{L\log^{\frac{1}{2}} L(B_{4\rho})} \leq \|\laps{n} u\|_{L\log^{\frac{1}{2}} L(\Omega)} \leq C_1.
\]
For the remaining term $\|u \laps{n} u^\eps \|_{L^1(\Omega)}$, we need to argue as follows. Firstly, since $u^\eps$ is the usual mollification, we have
\begin{equation}\label{eqeps}
 \|\laps{n} u^\eps\|_{L^2(\R^n)} \leq \eps^{-\frac{n}{2}} \| (-\lap)^{\frac{n}{4}} u\|_{L^2(\R^n)}
\end{equation}
Moreover, since $\Omega$ is smooth and bounded and $u \in X(\Omega)$, the results by \cite{gru}, see also \cite[Theorem~1.2]{XR2}, using that 
\[
\|\laps{n} u\|_{L^\infty(\Omega)} =: C_3 < \infty,
\]
then if we set $\Omega_{-\eps} := \{x \in \Omega: \dist(x,\partial \Omega) > \eps\}$
\[
 \|u\|_{L^\infty(\Omega \backslash \Omega_{-\eps})} \aleq \eps^{\frac{n}{2}}\, C_3.
\]
In particular with \eqref{eqeps} we have 
\[\begin{split}
\|u\laps{n} u^\eps\|_{L^1(\Omega)} &\leq \|u\laps{n} u^\eps\|_{L^1(\Omega_{-\eps})} +  |\Omega \backslash \Omega_{-\eps}|^{\frac{1}{2}} \| (-\lap)^{\frac{n}{4}} u\|_{L^2(\R^n)}\\
&=  \|u\laps{n} u^\eps\|_{L^1(\Omega_{-\eps})} +  o(1) \text{\quad as $\eps \to 0$.}
\end{split}
\]
Now note again that
\[
 \laps{n} u^\eps = (\laps{n} u)^\eps \quad \mbox{pointwise in $\Omega_{-\eps}$}.
\]
Consequently, 
\[
 \|u\laps{n} u^\eps\|_{L^1(\Omega)} \leq \|u(\laps{n} u)^\eps\|_{L^1(\Omega_{-\eps})} +o(1)
\xrightarrow{\eps \to 0} \|u\ \laps{n} u\|_{L^1(\Omega)}.
\]
This concludes the proof of Lemma~\ref{la:ulapuest}.
\end{proof}

\subsection{Convergence of \texorpdfstring{$\eta_k$}{etak}}
Let $\eta_k$ be as in \eqref{defetak}.
From Proposition~\ref{la:ulapuest} we now infer:

\begin{proposition}\label{gradest}
For every $s\in (0,n)$ there exists $C>0$ such that for every $R>0$ and $k$ large enough (depending on $R$ and $s$) we have
\begin{equation}\label{eq:lapsest}\int_{B_{R}}|(-\Delta)^\frac{s}{2} \eta_{k}| dx \leq C R^{n-s}.\end{equation}
\end{proposition}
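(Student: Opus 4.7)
The plan is to reduce the claim for $\eta_k$ to the already-proved bound of Proposition \ref{la:ulapuest} applied to the sequence $u_k$ on a small ball around the concentration point $x_k$, by using scaling plus the fact that $u_k$ is essentially equal to $m_k$ on $B_{Rr_k}(x_k)$.

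First I would use the scaling covariance of $(-\Delta)^{s/2}$. If $v_k(x):=u_k(x_k+r_k x)$, then $\eta_k=m_k v_k-m_k^2$ and the constant $m_k^2$ is annihilated by the fractional Laplacian, so
\[
(-\Delta)^{s/2}\eta_k(x)=m_k r_k^{s}\,((-\Delta)^{s/2}u_k)(x_k+r_k x).
\]
Changing variables $y=x_k+r_k x$ therefore gives
\[
\int_{B_R}\!\!|(-\Delta)^{s/2}\eta_k|\,dx=m_k\,r_k^{s-n}\!\int_{B_{Rr_k}(x_k)}\!|(-\Delta)^{s/2}u_k(y)|\,dy.
\]

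Next I would insert a factor of $u_k$ inside the right-hand integral and match with Proposition \ref{la:ulapuest}. By Lemma \ref{diff}, $u_k(x_k+r_k\,\cdot)-m_k\to 0$ in $C^{n-1}(B_R)$, so for all sufficiently large $k$ (depending on $R$) one has $u_k(y)\ge m_k/2$ for every $y\in B_{Rr_k}(x_k)$. Consequently
\[
\int_{B_{Rr_k}(x_k)}|(-\Delta)^{s/2}u_k(y)|\,dy\le\frac{2}{m_k}\int_{B_{Rr_k}(x_k)}|u_k\,(-\Delta)^{s/2}u_k|\,dy.
\]
To apply Proposition \ref{la:ulapuest} with $\rho=Rr_k$ and $x_0=x_k$, I have to check two things:
(a) $B_{4\rho}(x_k)\subset\Omega$, which holds for large $k$ by Lemma \ref{rk} since $\dist(x_k,\partial\Omega)/r_k\to\infty$;
(b) the uniform bound \eqref{eq:uniformbound} holds with a constant $C_1$ independent of $k$ for $f_k:=\lambda_k u_k e^{\frac{n}{2}u_k^2}$. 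The term $\|u_k f_k\|_{L^1(\Omega)}=\|(-\Delta)^{n/4}u_k\|_{L^2}^2$ is controlled by $\Lambda$ via the energy identity \eqref{energy}; the $L\log^{1/2}L$-bound on $f_k$ was recalled just before the statement of Proposition \ref{la:ulapuest} (arguing as in \cite[Proof of Lemma 5]{mar4}); and $\|(-\Delta)^{n/4}u_k\|_{L^2}^2=\|u_k\|_X^2\le\Lambda+o(1)$ by assumption. Moreover $f_k\in L^\infty(\Omega)$ by Remark \ref{rmkreg}. Proposition \ref{la:ulapuest} therefore yields
\[
\int_{B_{Rr_k}(x_k)}|u_k\,(-\Delta)^{s/2}u_k|\,dy\le C_2\,(Rr_k)^{n-s}.
\]

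Combining the three displayed estimates gives
\[
\int_{B_R}|(-\Delta)^{s/2}\eta_k|\,dx\le m_k\,r_k^{s-n}\cdot\frac{2}{m_k}\cdot C_2\,(Rr_k)^{n-s}=2C_2\,R^{n-s},
\]
which is the desired bound with a constant $C=2C_2$ independent of $R$ and (large) $k$.

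There is no serious obstacle here; the whole point is that the proposition is an essentially routine consequence of the hard estimate of Proposition \ref{la:ulapuest} once one exploits scaling and the fact that the concentration profile forces $u_k\approx m_k$ on the relevant ball (so that the extra factor of $u_k$ appearing in Proposition \ref{la:ulapuest} is harmless and exactly cancels the factor of $m_k$ produced by the definition of $\eta_k$). The only point that requires attention is the uniform verification of \eqref{eq:uniformbound}, which is addressed in the second paragraph above.
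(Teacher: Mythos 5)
Your proof is correct and follows essentially the same route as the paper: rescale via $(-\Delta)^{s/2}\eta_k(x)=m_k r_k^{s}((-\Delta)^{s/2}u_k)(x_k+r_kx)$ (noting the constant $m_k^2$ is annihilated), use Lemma~\ref{diff} to get $u_k\ge m_k/2$ on $B_{Rr_k}(x_k)$ so as to insert the factor $u_k$, and then invoke Proposition~\ref{la:ulapuest} with $\rho=Rr_k$. Your explicit verification of the hypotheses \eqref{eq:uniformbound} (via \eqref{energy}, the $L\log^{1/2}L$ bound, and Lemma~\ref{rk} for $B_{4\rho}(x_k)\subset\Omega$) is exactly the setup the paper carries out just before stating Proposition~\ref{la:ulapuest}.
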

\begin{proof}
According to Lemma \ref{diff} we have
$$m_k\le 2u_k\quad \text{on } B_{Rr_k}(x_k)\text{ for }k\text{ large enough,}$$
hence with Proposition~\ref{la:ulapuest} applied with $u=u_k$ and $\rho=Rr_k$ we obtain (note that $\laps{s} (m_k^2) = 0$)
\[
\begin{split}
\int_{B_{R}}|(-\Delta)^\frac{s}{2} \eta_{k}| dx&= \frac{m_k}{r_{k}^{n-s}} \int_{B_{Rr_{k}}(x_k)}|(-\Delta)^\frac s2 u_{k}| dx \\
&\le \frac{2}{r_{k}^{n-s}} \int_{B_{Rr_{k}}(x_k)}|u_k (-\Delta)^\frac s2 u_{k}| dx\\
& \leq C R^{n-s},
\end{split}
\]
as claimed.
\end{proof}

%
%

\begin{proposition}\label{pr:boundedness}
For every $B_R \subset \R^n$ and any $\alpha \in [0,1)$ there exists a constant $C_{R,\alpha}$ so that
\[
 \| \eta_k \|_{C^{n-1+\alpha}(B_R)} \leq C_{R,\alpha}.
\]
for $k$ large enough.
\end{proposition}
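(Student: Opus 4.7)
The plan is to combine the PDE satisfied by $\eta_k$ with the integral bounds of Proposition~\ref{gradest} and to invoke the fractional Schauder-type estimate Lemma~\ref{la:LinftyRHSest} already used throughout the paper.

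First I would derive the equation governing $\eta_k$. Using \eqref{eq0}, the scaling \eqref{defetak}, the normalization \eqref{eq:rkdef}, and the identity
$$\tfrac{n}{2}u_k^2(x_k+r_kx)-\tfrac{n}{2}m_k^2 \;=\; n\eta_k+\tfrac{n}{2}\tfrac{\eta_k^2}{m_k^2},$$
I obtain, on $\Omega_k:=r_k^{-1}(\Omega-x_k)$,
$$(-\Delta)^{n/2}\eta_k(x) \;=\; 2^n(n-1)!\left(1+\tfrac{\eta_k(x)}{m_k^2}\right)\exp\!\Bigl(\tfrac{n}{2}\bigl(u_k^2(x_k+r_kx)-m_k^2\bigr)\Bigr).$$
Since $m_k=\sup_\Omega|u_k|$ forces $|u_k(x_k+r_kx)|\le m_k$, both factors can be estimated: $|1+\eta_k/m_k^2|=|u_k(x_k+r_kx)|/m_k\le 1$ and $u_k^2(x_k+r_kx)-m_k^2\le 0$, so the exponential is $\le 1$. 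Hence $|(-\Delta)^{n/2}\eta_k|\le 2^n(n-1)!$ pointwise on $\Omega_k$; by Lemma~\ref{rk}, $B_{2R}\subset\Omega_k$ for $k$ large, so this uniform $L^\infty$ bound holds on $B_{2R}$.

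Next I would combine this uniform $L^\infty$ bound on $(-\Delta)^{n/2}\eta_k$ with the integral estimates of Proposition~\ref{gradest},
$$\int_{B_\rho}|(-\Delta)^{s/2}\eta_k|\,dx\le C_s\rho^{n-s}\qquad \text{for every } s\in(0,n),\ \rho>0,$$
together with the anchor $\eta_k(0)=0$. Feeding this data into Lemma~\ref{la:LinftyRHSest}, the fractional regularity tool invoked elsewhere in the paper to promote $L^\infty$ control of the fractional Laplacian (plus tail control) into pointwise Hölder bounds, yields $\|\eta_k\|_{C^{n-1+\alpha}(B_R)}\le C_{R,\alpha}$ as claimed.

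The main obstacle is the nonlocal character of $(-\Delta)^{n/2}$: since $\eta_k\equiv -m_k^2$ outside $\Omega_k$ and $m_k\to\infty$, the sequence $\eta_k$ is \emph{not} uniformly bounded on $\R^n$, and a naive Schauder argument based on global $L^\infty$ control would fail. The Morrey-type integral bounds of Proposition~\ref{gradest} are precisely the ingredient designed to control the nonlocal tails, and are what makes Lemma~\ref{la:LinftyRHSest} applicable in this setting.
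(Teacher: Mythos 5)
Your derivation of the pointwise bound $|(-\Delta)^{\frac n2}\eta_k|\le 2^n(n-1)!$ on $\Omega_k$ is correct and is indeed the starting point of the paper's argument. The gap is in the final step. Lemma~\ref{la:LinftyRHSest} cannot be ``fed'' the data you have: its conclusion reads $[\nabla^\ell h]_{C^{0,\alpha}(K)}\le C\,\|(-\Delta)^{\frac n4}h\|_{L^2(\R^n)}+C\,\|f\|_{L^\infty(\Omega)}$, and for $h=\eta_k$ the first term equals $m_k\|(-\Delta)^{\frac n4}u_k\|_{L^2(\R^n)}\sim m_k\sqrt{\Lambda}\to\infty$. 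Its proof moreover controls the tail of the Riesz-transform representation of $\nabla h$ by the global norm $\|(-\Delta)^{\frac12}h\|_{L^n(\R^n)}$, which also diverges for $\eta_k$. You correctly identify that the global unboundedness of $\eta_k$ is the obstruction, but asserting that Proposition~\ref{gradest} ``makes Lemma~\ref{la:LinftyRHSest} applicable'' is not a proof: the lemma has no hypothesis into which a Morrey-type $L^1$ bound on $(-\Delta)^{\frac s2}\eta_k$ can be inserted, and producing the variant that does accept such input is exactly the content of the proposition you are asked to prove.

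The paper's actual mechanism is to localize \emph{before} applying elliptic theory. Rewriting the weak inequality $|(-\Delta)^{\frac n2}\eta_k|\le C_R$ in $B_R$ as $|(-\Delta)^{\frac{n-1}{2}}\psi_k|\le C_R$ for $\psi_k:=(-\Delta)^{\frac12}\eta_k$ turns it into a genuinely local, integer-order elliptic inequality (here the parity of $n$ is essential). Proposition~\ref{gradest} with $s=1$ supplies the only other input needed, namely $\|\psi_k\|_{L^1(B_R)}\le CR^{n-1}$, and standard local elliptic estimates then give $\|\psi_k\|_{C^{n-2,\alpha}(B_{R/2})}\le C_{R,\alpha}$ with no global norms entering anywhere. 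There remains a step your proposal omits entirely: passing from uniform bounds on $(-\Delta)^{\frac12}\eta_k$ back to bounds on $\eta_k$ itself and its derivatives up to order $n-1$; the paper does this via a Harnack-type inequality for the half-Laplacian from \cite{IMM}, exploiting that $\eta_k\le 0=\eta_k(0)$. Without the reduction to $\psi_k$ and this last recovery step, the proposed argument does not close.
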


\begin{proof} We have that $|(-\Delta)^\frac{n}{2} \eta_k|\le C(R)$ in $B_R$, in the sense that
\[\bigg|\int_{\R^n}(-\Delta)^\frac{n}{4} \eta_k (-\Delta)^\frac{n}{4} \varphi dx\bigg| \le C\|\varphi\|_{L^1(B_R)},\quad \text{for }\varphi\in C^\infty_c(B_R).\]
This can be rewritten as
\begin{equation}\label{eqinfty}
\bigg|\int_{\R^n}(-\Delta)^\frac{1}{2} \eta_k (-\Delta)^\frac{n-1}{2} \varphi dx\bigg| \le C\|\varphi\|_{L^1(B_R)},\quad \text{for }\varphi\in C^\infty_c(B_R),
\end{equation}
which means that the function $\psi_k:=(-\Delta)^\frac12 \eta_k$ satisfies
$$|(-\Delta)^\frac{n-1}{2}\psi_k| \le C_R\quad \text{in }B_R$$
in the sense of distributions (notice that $(-\Delta)^\frac{n-1}{2}$ is an integer power of $-\Delta$ since $n$ is odd).
This, together with the estimate
$$\|\psi_k\|_{L^1(B_R)}\le C R^{n-1}$$
given by Proposition \ref{gradest}, and standard elliptic estimates (see e.g. Proposition 4 and Lemma 20 in \cite{mar1}) implies that
$$\|\psi_k\|_{C^{n-2,\alpha}(B_{R/2})}\le C_{R,\alpha}\quad \text{for }0\le \alpha<1,$$
as claimed. Together with Harnack's inequality (see \cite{IMM}) we get
$$\|\eta_k\|_{C^{n-1,\alpha}(B_{R/4})}\le C_{R,\alpha}\quad \text{for }0\le \alpha<1,$$
and replacing $R$ with $4R$ we conclude.
\end{proof}

\begin{proposition}\label{boundLe}
The sequence $(\eta_k)$ is uniformly bounded in $L_s(\R^n)$ for any $s > 0$.
\end{proposition}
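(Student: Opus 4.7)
The plan is to exploit the fact that $\eta_k(0)=m_k(u_k(x_k)-m_k)=0$ and $\eta_k\le 0$ on $\R^n$ (since $u_k\le m_k$), together with the gradient estimate from Proposition~\ref{gradest} with $s=1$, in order to show that the spherical averages of $|\eta_k|$ grow at most logarithmically. Weighting by $(1+|x|^{n+s})^{-1}$ and integrating on dyadic annuli will then yield a uniform $L_s$ bound.

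First I would write $|\eta_k(x)|=-\eta_k(x)=\eta_k(0)-\eta_k(x)$ and apply the fundamental theorem of calculus along the segment from $0$ to $x$ to get $|\eta_k(x)|\le |x|\int_0^1 |\nabla\eta_k(tx)|\,dt$. Integrating this over $\partial B_r$, swapping the order of integration, and changing variables $y=tx$ yields, for the spherical average,
\[
\frac{1}{|\partial B_r|}\int_{\partial B_r}|\eta_k|\,d\sigma\le \frac{1}{|S^{n-1}|}\int_0^r \rho^{-(n-1)}F_k(\rho)\,d\rho,\quad F_k(\rho):=\int_{\partial B_\rho}|\nabla\eta_k|\,d\sigma.
\]
Proposition~\ref{gradest} with $s=1$ gives $\int_0^\rho F_k(t)\,dt=\int_{B_\rho}|\nabla\eta_k|\,dx\le C\rho^{n-1}$. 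Splitting the $\rho$-integral at $\rho=1$ and using the $C^{n-1,\alpha}$ bound from Proposition~\ref{pr:boundedness} on $B_1$ for the contribution of $\rho\in(0,1)$, an integration by parts in $\rho$ then gives $\int_0^r \rho^{-(n-1)}F_k(\rho)\,d\rho\le C(1+\log r)$, hence $|\partial B_r|^{-1}\int_{\partial B_r}|\eta_k|\,d\sigma\le C(1+\log r)$ on the range of $r$ on which Proposition~\ref{gradest} applies.

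Next I would integrate over the dyadic annuli $A_j:=B_{2^{j+1}}\setminus B_{2^j}$: this yields $\int_{A_j}|\eta_k|\,dx\le C\,2^{jn}(1+j)$. Weighting by $(1+|x|^{n+s})^{-1}$ and summing gives the convergent geometric-type series
\[
\sum_{j\ge 0}2^{-j(n+s)}\int_{A_j}|\eta_k|\,dx\le C\sum_{j\ge 0}2^{-js}(1+j)\le C_s,
\]
uniformly in $k$, for any $s>0$. This controls the contribution from $|x|\lesssim R_k$, where $R_k:=\dist(x_k,\partial\Omega)/r_k$ is the scale up to which the gradient estimate is in force.

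The remaining far-field $|x|\gtrsim R_k$ is handled using the pointwise bound $|\eta_k|\le 2m_k^2$ (coming from $-2m_k^2\le\eta_k\le 0$), which contributes at most $Cm_k^2/R_k^s$. The hard part of the plan is to show that this tail is uniformly bounded: this will exploit the rapid decay $r_k\sim e^{-m_k^2/2}/m_k^{2/n}$ from~\eqref{eq:rkdef} together with $R_k\to\infty$ from Lemma~\ref{rk}, so that the exponential gain coming from $r_k\to 0$ dominates the $m_k^2$ factor from the pointwise bound on $|\eta_k|$.
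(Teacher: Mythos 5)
Your strategy for the region $|x|\lesssim R_k$ is plausible (and the sign information $\eta_k\le 0$, $\eta_k(0)=0$ is indeed essential), but the tail estimate — which you yourself flag as ``the hard part'' — is a genuine gap, and the proposed fix does not close it. The quantity you need to bound is $m_k^2 R_k^{-s}$ with $R_k=\dist(x_k,\partial\Omega)/r_k$, and while $r_k\sim (\lambda_k m_k^2)^{-1/n}e^{-m_k^2/2}$ decays super-exponentially, you have no lower bound on $\dist(x_k,\partial\Omega)$: Lemma~\ref{rk} only gives the qualitative statement $R_k\to\infty$, with no rate, and nothing in the paper up to this point excludes, say, $\dist(x_k,\partial\Omega)\sim m_k r_k$, for which $m_k^2R_k^{-s}=m_k^{2-s}\to\infty$ when $s<2$. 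In fact the bound $m_k^2R_k^{-s}\le C$ is essentially part of the \emph{conclusion} of the proposition (since $|\eta_k|\equiv m_k^2$ on $\Omega_k^c$, which can occupy a set of measure $\gtrsim R_k^n$ at distance $\sim R_k$ from the origin when $x_k$ approaches $\partial\Omega$), so deriving it beforehand from \eqref{eq:rkdef} and Lemma~\ref{rk} alone is not possible. A secondary issue: even on $|x|\le R_k$, converting Proposition~\ref{la:ulapuest} into $\int_{B_R}|\nabla\eta_k|\,dx\le CR^{n-1}$ requires $u_k\ge m_k/2$ on $B_{Rr_k}(x_k)$, which Lemma~\ref{diff} provides only for each \emph{fixed} $R$ and $k$ large, not uniformly up to the scale $R_k$ (where $u_k$ vanishes near $\partial\Omega$); so the range on which your spherical-average bound is actually in force is an unquantified $o(R_k)$, making the tail problem worse.

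The paper's proof sidesteps all of this by exploiting the nonlocality of $\laps{s}$ for $s\in(0,1)$ rather than the local gradient: it only uses the estimate $\int_{B_1}|\laps{s}\eta_k|\,dx\le C$ from Proposition~\ref{gradest} on the \emph{unit ball}, where everything is under control for $k$ large. Writing $\laps{s}\eta_k(x)$ for $x\in B_1$ as the singular integral \eqref{lapshwk}, the kernel $|x-y|^{-n-s}$ reaches out to all of $\R^n$; testing against a nonnegative $\varphi\in C^\infty_c(B_1)$ with $\varphi\equiv 1$ on $B_{1/2}$, splitting into $y\in B_2$ (controlled by the local $C^{0,1}$ and $L^\infty$ bounds of Proposition~\ref{pr:boundedness}) and $y\in B_2^c$, and using that $-\eta_k=|\eta_k|$ makes the far-field term nonnegative, one reads off $\int_{|y|>2}|\eta_k(y)|(1+|y|^{n+s})^{-1}dy\le C$ directly. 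If you want to salvage an argument in your spirit, the key missing idea is precisely this: replace the gradient ($s=1$, purely local) by a fractional derivative of order $s<1$, whose integral bound on a fixed ball already encodes the behaviour of $\eta_k$ at infinity.
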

\begin{proof}
Since by Proposition \ref{pr:boundedness} the sequence $(\eta_k)$ is bounded in $L^\infty(B_1)$, it is easy to see that boundedness of $(\eta_k)$ in $L_s(\R^n)$ for some $s>0$ implies boundedness in $L_{s'}(\R^n)$ for every $s'>s$. Therefore without loss of generality we can assume that $s<1$. We then have
\begin{equation}\label{lapshwk}
\laps{s} \eta_k(x) = C_{n,s} \int_{\R^n} \frac{\eta_k(y)+\eta_k(x)}{|x-y|^{n+s}}\, dy.
\end{equation}
Consequently,  for an arbitrary $\varphi\in C^\infty_c(B_1)$ using \eqref{eq:lapsest}
\[
\begin{split}
C\|\varphi\|_{L^\infty(B_1)}&\overset{\eqref{eq:lapsest}}{\ge}
\bigg|\int_{B_1} \laps{s} \eta_k\, \varphi\, dx\bigg|\\
&\overset{\eqref{lapshwk}}{\geq}
\int_{B_1} \int_{B_{2}}\frac{\eta_k(x)-\eta_k(y)}{|x-y|^{n+s}}dy\, \varphi(x)\, dx\\
&\quad +\int_{B_1} \int_{B_{2}^c}\frac{1}{|x-y|^{n+s}}dy\, \eta_k(x)\,  \varphi(x)\, dx \\
&\quad +\int_{B_1} \int_{B_{2}^c}\frac{-\eta_k(y)}{|x-y|^{n+s}}dy\, \varphi(x)\, dx\\
%
&=:I+II +III.
\end{split}
\]
Since by Proposition~\ref{pr:boundedness},
\[
 |\eta_k(x) - \eta_k(y)| \aleq |x-y| \quad \forall x,y \in B_2,
\]
we have that
\[
 |I| \aleq \int_{B_1} |\varphi(x)| \int_{|x-y| \leq 3} |x-y|^{-n+1-s}\ dy\ dx \aleq \|\varphi\|_{1}. 
\]
Since we also have $|\eta_k(x)| \leq C$ for all $x \in B_2$,
\[
 |II| \aleq \int_{B_1} |\varphi(x)| \int_{|x-y| > 1} |x-y|^{-n-s}\ dy\ dx \aleq \|\varphi\|_{1}. 
\]
Finally, since $-\eta_k(y) = |\eta_k(y)|$, we arrive at
\[
 \int_{B_1} \int_{B_{2}^c}\frac{|\eta_k(y)|}{|x-y|^{n+s}}dy\ \varphi(x)\, dx \leq C,
\]
for a constant depending on $\varphi$ and $s$, but independent of $k$. Taking $\varphi(x)$ to be non-negative and so that $\varphi \equiv 1$ on $B_{1/2}$, we arrive at
\[
 \int_{|y|> 2}\frac{|\eta_k(y)|}{1+|y|^{n+s}}dy \leq C.
\]
Since again by Proposition~\ref{pr:boundedness} for a $C$ uniform in $k$,
\[
 \int_{|y|< 2}\frac{|\eta_k(y)|}{1+|y|^{n+s}}dy \leq C.
\]
we have shown that
\[
 \sup_{k \in \N} \|\eta_k\|_{L_s(\R^n)} \leq C.
\]
\end{proof}

\begin{proposition}
Up to a subsequence,  $\eta_{k} +\log 2\to \eta_0 =\log(\frac{2}{1+|\cdot |^{2}})$ in $C^{\ell}_{\loc}(\mathbb{R}^n)$ for every $\ell\ge 0$, and
\begin{equation}\label{eneretak}
\lim_{R\to\infty}\lim_{k\to\infty} \int_{B_{Rr_k}(x_k)}\lambda_k u_k^2 e^{\frac n2 u_k^2}dx =(n-1)!\int_{\R^n} e^{n\eta_0}dx =\Lambda_1.
\end{equation}
\end{proposition}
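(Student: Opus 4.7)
The plan is to (i) extract a locally smooth subsequential limit $\eta_\infty$ of $(\eta_k)$, (ii) show $\eta_0:=\eta_\infty+\log 2$ solves \eqref{eqeta0}, (iii) use Hyder's Theorem~\ref{thmali} together with the $L_s$ bounds from Proposition~\ref{boundLe} to force $\eta_0$ into the form \eqref{spherical}, and (iv) pass to the limit in the rescaled energy.

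For (i), Proposition~\ref{pr:boundedness} and Arzelà-Ascoli give $\eta_k\to\eta_\infty$ in $C^{n-1}_{\loc}(\R^n)$ up to subsequence, with $\eta_\infty(0)=0$ and $\eta_\infty\le 0$ (the latter since $m_k=\sup u_k$ forces $\eta_k\le 0$). A direct computation using $u_k(x_k+r_k x)=m_k+\eta_k(x)/m_k$ together with \eqref{eq:rkdef} yields, on $\Omega_k=r_k^{-1}(\Omega-x_k)$,
\[
(-\lap)^{\frac n2}\eta_k=2^n(n-1)!\Bigl(1+\tfrac{\eta_k}{m_k^2}\Bigr)e^{n\eta_k}e^{\frac{n\eta_k^2}{2m_k^2}},
\]
whose right-hand side converges locally uniformly to $2^n(n-1)!e^{n\eta_\infty}$.

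The main obstacle is step (ii): the nonlocality of $(-\lap)^{n/2}$ means that $(-\lap)^{n/2}\varphi$ is not compactly supported when $\varphi\in C^\infty_c(\R^n)$, so the local convergence of $\eta_k$ does not automatically yield $\int \eta_k\,(-\lap)^{n/2}\varphi\,dx\to\int\eta_\infty\,(-\lap)^{n/2}\varphi\,dx$. I would split this integral over $B_R$ and $B_R^c$: local uniform convergence handles the former, while the decay estimate $|(-\lap)^{n/2}\varphi(x)|\le C(\varphi)(1+|x|^{2n})^{-1}$ combined with the uniform bound $\|\eta_k\|_{L_s(\R^n)}\le C$ from Proposition~\ref{boundLe} (for any fixed $s\in(0,1)$) controls the latter by $O(R^{s-n})$. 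Sending $k\to\infty$ then $R\to\infty$ gives step (ii); finiteness of $\int e^{n\eta_0}\,dx$ follows from Fatou applied to the rescaled version of \eqref{energy}, so Theorem~\ref{thmali} applies and yields $\eta_0=v+P$ with $\deg P\le n-1$ and $v(x)=-\alpha\log|x|+o(\log|x|)$ as $|x|\to\infty$.

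For step (iii), by lower semicontinuity the $L_s$ bound descends to $\eta_0$ for every $s>0$. If $P$ had degree $j\ge 1$, its leading homogeneous part would force $|\eta_0(x)|\ge c|x|^j$ on an open cone at infinity, so $\eta_0\notin L_s$ for any $s<1\le j$, a contradiction. Hence $P$ is constant and $\eta_0$ has the form \eqref{spherical}; the constraints $\eta_0(0)=\log 2$ and $\max\eta_0\le\log 2$ force $\log(2\lambda)=\log 2$ at the unique maximum $x_0$, yielding $\lambda=1$ and $x_0=0$, so $\eta_0(x)=\log(2/(1+|x|^2))$. The upgrade from $C^{n-1}_{\loc}$ to $C^\ell_{\loc}$ for every $\ell\ge 0$ is then a standard elliptic bootstrap on the identified limit equation, using the uniform $L_s$ bounds to control the nonlocal tails. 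For (iv), the change of variable $x=x_k+r_k y$ combined with the same algebraic identity gives
\[
\int_{B_{Rr_k}(x_k)}\lambda_k u_k^2 e^{\frac n2 u_k^2}\,dx=2^n(n-1)!\int_{B_R}\Bigl(1+\tfrac{\eta_k}{m_k^2}\Bigr)^{\!2}e^{n\eta_k}e^{\frac{n\eta_k^2}{2m_k^2}}\,dy,
\]
whose integrand converges uniformly on $B_R$ to $2^n(n-1)!e^{n\eta_\infty}=(n-1)!e^{n\eta_0}$; sending $R\to\infty$ and using the stereographic identity $\int_{\R^n} 2^n(1+|x|^2)^{-n}\,dx=|S^n|$ yields $\Lambda_1$, completing the proof.
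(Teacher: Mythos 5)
Your proposal is correct and follows essentially the same route as the paper: local convergence via Proposition~\ref{pr:boundedness} and Arzel\`a--Ascoli, passage to the limit in the equation by splitting the test integral into $B_{\theta R}$ and its complement and controlling the tail with the decay of $\laps{n}\varphi$ together with the uniform $L_s$ bounds of Proposition~\ref{boundLe}, Hyder's Theorem~\ref{thmali} plus the polynomial-growth versus $L_s$ contradiction to identify $\eta_0$ as the spherical solution with $\lambda=1$, $x_0=0$, and the same change of variables for the energy identity \eqref{eneretak}. Your explicit Fatou argument for $\int_{\R^n}e^{n\eta_0}\,dx<\infty$ (needed so that Theorem~\ref{thmali} applies) is a small point the paper leaves implicit.
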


\begin{proof}
Let $\eta_0$ be the pointwise limit of $\eta_k +\log 2$, which exists up to a subsequence, by Proposition \ref{pr:boundedness} and Arzel\`a-Ascoli's theorem. In fact the limit is in $C^\ell_{\loc}(\R^n)$ for every $\ell\ge 0$ since with Proposition \ref{boundLe} one can bootstrap regularity for the operator $(-\Delta)^\frac n2$, see e.g. \cite[Corollary 24]{JMMX}. It follows from Proposition \ref{boundLe} that 
\begin{equation}\label{eta0s}
\eta_0 \in L_s(\R^n)\quad \text{for every }s>0.
\end{equation}
We then have 
$$\lim_{k\to\infty}\int_{\R^n}e^{n(\eta_k+\log 2)} \varphi dx =\int_{\R^n}e^{n\eta_0}\varphi dx \quad  \text{for every }\varphi \in {C^\infty_c(\R^n)}.$$
We will show that moreover
\begin{equation}\label{eq:convergencePDEgoal}
  \lim_{k \to \infty} \int_{\R^n} \eta_k\ \laps{n} \varphi dx =  \int_{\R^n} \eta_0 \laps{n} \varphi dx\quad \text{for every }\varphi \in {C^\infty_c(\R^n)}.
\end{equation}
Then $\eta_0$ satisfies $(-\Delta)^\frac n2 \eta_0=(n-1)!e^{n\eta_0}$ as a distribution, and in fact also as tempered distribution. Then from to Theorem \ref{thmali} we infer that $\eta_0=v+P$ where $|v|\le C(1+\log(1+|\cdot|))\in L_{s}(\R^n)$ for every $s>0$, and $P$ is polynomial bounded from above. It is easy to see that if $P$ is not constant, then $P\not\in L_s(\R^n)$ for any $s>0$, which contadicts \eqref{eta0s}. Therefore $P$ is constant and $\eta_0$ is as in \eqref{spherical}. It remains to determine $\lambda$ and $x_0$ in \eqref{spherical}, but this is easy since $\eta_k(0)=0=\max_{\R^n}\eta_k$, so that $\eta_0(0)=\log 2=\max_{\R^n}\eta_0$, i.e. $x_0=0$, $\lambda=1$ and
$\eta_0(x)=\log\frac{2}{1+|x|^2}$.

In order to obtain \eqref{eq:convergencePDEgoal}, Assume that for some $R > 0$, $\operatorname{supp} \varphi \subset B_R$ and let $\theta > 1$. Then,
\[
\begin{split}
\int \eta_0 \laps{n} \varphi\, dx  - \int \eta_k \laps{n} \varphi\, dx
=&\int_{B_{\theta R}} (\eta_0 \laps{n} \varphi  - \eta_k\ \laps{n} \varphi)\, dx\\
&+\int_{\R^n \backslash B_{\theta R}} (\eta_0 \laps{n} \varphi  - \eta_k\ \laps{n} \varphi)\, dx\\
=:& I + II.
\end{split}
 \]
Notice that by the disjoint support $\varphi$, see Lemma~\ref{la:nonloc},
\[
\begin{split}
|II| &\aleq \|\varphi\|_{L^1(\R^n)}\, \int_{\R^n \backslash B_{\theta R}} \frac{|\eta_0(x)|+|\eta_k(x)|}{|x|^{2n}}\, dx\\
&
\aleq \|\varphi\|_{L^1(\R^n)}\, \theta^{s-n} \int_{\R^n \backslash B_{\theta R}} \frac{|\eta_0(x)|+|\eta_k(x)|}{1+|x|^{n+s}}\, dx
\end{split}
\]
and the uniform bound of $\eta_0$ and $\eta_k$ in $L_{s}(\R^n)$ implies that
\[
 |II| \aleq \theta^{s -n},
\]
for a constant independent of $k$. On the other hand, $\eta_0 - \eta_k \to 0$ uniformly in $B_{\theta R}$, which implies that $\lim_{k \to \infty} I = 0$. Consequently,
\[
 \lim_{k\to \infty}\bigg|\int \eta_0 \laps{n} \varphi\, dx - \int \eta_k\ \laps{n} \varphi\, dx\bigg| \aleq  \theta^{s -n},
\]
for any $\theta > 1$, and letting $\theta \to \infty$ we conclude the proof of \eqref{eq:convergencePDEgoal}.

Finally, using Lemma \ref{diff} and the definition of $r_k$, we obtain
\[\begin{split}
\int_{B_{Rr_k}(x_k)}\lambda_k u_k^2 e^{\frac n2 u_k^2}dx&=\int_{B_R}r_k^n \lambda_k u_k^2(r_k\cdot)e^{\frac n2 m_k^2}e^{\frac n2(u_k(r_k\cdot)-m_k)^2}e^{n\eta_k}dx\\
&=\int_{B_R}r_k^n \lambda_k m_k^2(1+o(1))e^{\frac n2 m_k^2}e^{o(1)}e^{n\eta_k}dx\\
&=2^n(n-1)!\int_{B_R}(1+o(1))e^{n\eta_k}dx\\
&=(n-1)!\int_{B_R}e^{n\eta_0}dx+o(1)
\end{split}\]
with $o(1)\to 0$ as $k\to\infty$. Now letting also $R\to\infty$ and noticing that
$$\int_{\R^n}e^{n\eta_0}dx=|S^n|,$$
we infer \eqref{eneretak}.
\end{proof}

\section{Borderline and commutator estimates}\label{sec:borderline}
We have the following two borderline estimates:
\begin{proposition}\label{borderline1}
Let $g \in L^2(\R^n)$, $f \in L\log^{1/2}L(\R^n)$, $s \in (0,n)$. Then 
\[
\| \lapms{n-s} (f\, \lapms{\frac{n}{2}}g)\|_{(\frac{n}{s},\infty)} \aleq  \vrac{g}_{L^2} \vrac{f}_{L\log^{1/2}L} 
\]
\end{proposition}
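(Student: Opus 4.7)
The plan is to factor the map $g\mapsto \lapms{n-s}(f\,\lapms{n/2}g)$ as a composition of three classical borderline estimates and chain them together. Setting $h:=\lapms{n/2}g$, the first ingredient is the weak-type endpoint of the Hardy--Littlewood--Sobolev inequality, which gives the bounded map $\lapms{n-s}:L^1(\R^n)\to L^{(n/s,\infty)}(\R^n)$ and hence reduces matters to bounding $\|fh\|_{L^1}$. The second ingredient is Orlicz--H\"older duality: since the Young functions $\Phi(t)=t\log^{1/2}(e+t)$ and $\Psi(t)=e^{t^2}-1$ form (up to equivalence) a complementary pair, one has
\[
\|fh\|_{L^1(\R^n)} \aleq \|f\|_{L\log^{1/2}L}\,\|h\|_{\exp L^2}.
\]
The third ingredient is the Moser--Trudinger/Adams inequality in its Riesz-potential form, $\int_B \exp(c|\lapms{n/2}g|^2/\|g\|_{L^2}^2)\,dx \le C|B|$ for compactly supported $g\in L^2$, which translates into $\|\lapms{n/2}g\|_{\exp L^2} \aleq \|g\|_{L^2}$. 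Concatenating the three inequalities yields the proposition.

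The main obstacle, and where I expect to spend most of the care, is step three in a genuinely global setting. For general $g\in L^2(\R^n)$ the Riesz potential $\lapms{n/2}g$ is defined only modulo additive constants, so its global $\exp L^2$ norm need not be finite and Adams' estimate is inherently local. In every application of this proposition, however, $f$ has bounded support in some ball $B$, so only the values of $h$ on $B$ contribute to $\|fh\|_{L^1}$. I would therefore split $g=g\chi_{2B}+g\chi_{(2B)^c}$: the near part is compactly supported in $2B$, and Adams applies directly to it; the far part, by Cauchy--Schwarz applied to $\int_{(2B)^c}|g(y)||x-y|^{-n/2}\,dy$ for $x\in B$, produces a pointwise-bounded function on $B$ (with at worst a logarithmic loss) that is harmless in the $\exp L^2$ norm after absorbing the additive constant that fixes the representative of $\lapms{n/2}g$. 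Once step three is secured in this localized form, the other two steps are standard, and the three estimates chain together to give the claimed bound.
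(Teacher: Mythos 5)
Your steps 1 and 2 are fine (weak-type Hardy--Littlewood--Sobolev, and the Orlicz--H\"older pairing of $L\log^{1/2}L$ with $\exp L^2$), but step 3 contains a genuine gap that your localization does not repair. For a general $g\in L^2(\R^n)$ the far-field contribution $\int_{(2B)^c}|g(y)|\,|x-y|^{-n/2}\,dy$ is \emph{not} controlled by Cauchy--Schwarz, because $|x-\cdot|^{-n/2}\notin L^2((2B)^c)$: the integral $\int_{|y|>R}|y|^{-n}\,dy$ diverges logarithmically at infinity. One can in fact choose $g\in L^2(\R^n)$ (e.g.\ $g(y)=|y|^{-n/2}\log^{-1}|y|$ for large $|y|$) for which this far-field integral is $+\infty$ for \emph{every} $x\in B$; subtracting an additive constant to pick a BMO representative cannot cure a divergent integral, and if you instead renormalize the kernel you change the left-hand side of the proposition by a term $\|I_{n-s}(c_g f)\|_{(\frac ns,\infty)}$ with a $g$-dependent constant $c_g$ that is not bounded by $\|g\|_{L^2}\|f\|_{L\log^{1/2}L}$. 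There is no global Adams inequality for $I_{n/2}$ on all of $L^2(\R^n)$ precisely for this reason (the known $\R^n$-versions require the full $H^{n/2}$ norm). Note also that the statement assumes only $f\in L\log^{1/2}L(\R^n)$, not compactly supported $f$, so the reduction to a ball is itself an extra hypothesis.

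The way out --- and the route the paper takes --- is to never estimate $I_{\frac n2}g$ at all: since the kernel is positive, Tonelli lets you move the Riesz potential onto $f$,
\[
\int_{\R^n}|f|\; I_{\frac n2}|g|\;dx \;=\;\int_{\R^n}|g|\; I_{\frac n2}|f|\;dx\;\le\;\|g\|_{L^2}\,\bigl\|I_{\frac n2}|f|\bigr\|_{L^2},
\]
and then one uses the Orlicz--Sobolev embedding $I_{\frac n2}:L\log^{1/2}L\to L^{(2,2)}=L^2$ (Proposition~\ref{pr:orliczsobolev}) instead of Adams' exponential inequality. This not only gives the bound but also shows a posteriori that $I_{\frac n2}|g|(y)<\infty$ for a.e.\ $y$ in $\{f\neq 0\}$, i.e.\ it is exactly the hypothesis $f\in L\log^{1/2}L$ that makes the product $f\,I_{\frac n2}g$ well defined --- the very point at which your argument breaks down. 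So the duality in your step 2 is the right instinct, but it must be applied to the bilinear form $\int |g|\,I_{\frac n2}|f|$ rather than to the ill-defined function $I_{\frac n2}g$.
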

\begin{proof}
By Fubini's theorem and Proposition~\ref{pr:orliczsobolev},
\[\begin{split}
\|f\lapms{\frac{n}{2}} g\|_{L^1}&\aleq  \int_{\R^n}\int_{\R^n}\frac{|g(z)|}{|z-y|^{n/2}}dz\ |f(y)|dy\\
&= \int_{\R^n}\int_{\R^n} \frac{|f(y)|}{|z-y|^{n/2}}dy\ |g(z)| dz\\
&=\||g|\ I_\frac{n}{2}|f|\|_{L^1}\\
&\le \|g\|_{L^2}\|I_\frac n2|f|\|_{L^2}\\
&\aleq\|g\|_{L^2}\|f\|_{L(\log^{1/2} L)},
\end{split}
\]
so that by Proposition~\ref{pr:Sobolev},
$$\|I_{n-s}(f\ I_{\frac n2}(g))\|_{(\frac ns,\infty)}\aleq \|f\ \lapms{\frac{n}{2}} g\|_{L^1}\aleq  \|g\|_{L^2}\|f\|_{L(\log^{1/2} L)}.$$
\end{proof}

\begin{proposition}\label{borderline2}
Let $g \in L^2(\R^n)$, $f \in L\log^{1/2}L(\R^n)$, $s \in (0,n)$. Then 
\[
\| \lapms{\frac{n}{2}} g\, \lapms{n-s}f\|_{(\frac{n}{s},\infty)} \aleq  \vrac{g}_{L^2} \vrac{f}_{L\log^{1/2}L} 
\]
\end{proposition}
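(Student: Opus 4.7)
The plan is to mirror the two-step strategy behind Proposition~\ref{borderline1}. Since the product $\lapms{\frac n2}g\,\lapms{n-s}f$ is not itself of the form ``$\lapms{n-s}$ applied to a product'', I will first pass through Lorentz duality in order to put myself back in that setting. Concretely, by the duality $(L^{(n/s,\infty)})'=L^{(n/(n-s),1)}$, it suffices to prove
\[
\Big|\int h\,\lapms{\frac n2}g\,\lapms{n-s}f\,dx\Big|\aleq \vrac{g}_{L^2}\vrac{f}_{L\log^{1/2}L}\vrac{h}_{L^{(n/(n-s),1)}}
\]
for arbitrary test functions $h$. Using the self-adjointness of $\lapms{\frac n2}$, the integral on the left equals $\int g\,\lapms{\frac n2}(h\,\lapms{n-s}f)\,dx$, so Cauchy-Schwarz in $L^2$ and Proposition~\ref{pr:orliczsobolev} reduce the task to the H\"older-type bound
\[
\vrac{h\,\lapms{n-s}f}_{L\log^{1/2}L}\aleq \vrac{h}_{L^{(n/(n-s),1)}}\vrac{f}_{L\log^{1/2}L}.
\]

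To establish this last inequality I would dualize one more time, now in the Orlicz pairing $L\log^{1/2}L$--$(L\log^{1/2}L)^*$: testing $h\,\lapms{n-s}f$ against a $\phi$ in the dual Orlicz space and using the self-adjointness of $\lapms{n-s}$ converts the claim into a bound on $\int f\,\lapms{n-s}(h\phi)\,dx$, which one can then close by repeating the Fubini + Cauchy-Schwarz scheme of Proposition~\ref{borderline1}, this time invoking the dual embedding $\lapms{\frac n2}:L^2\to(L\log^{1/2}L)^*$ (which follows from Proposition~\ref{pr:orliczsobolev} by duality) together with Proposition~\ref{pr:Sobolev}.

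The main obstacle is the borderline nature of every ingredient: at each step the Lorentz, Orlicz and Sobolev inequalities are invoked with no slack, and the only margin is provided by the $\log^{1/2}$-correction carried by $f$. In particular the dual Orlicz function of $L\log^{1/2}L$ grows like $e^{t^2}$, which is extremely restrictive on $\R^n$, so some care will be needed either to localize the argument using the structure of the intended application (where the $f$ one feeds into the proposition has compact support, as is the case for the error term $\theta_{B_{2\rho}}\laps{n}u^\eps$ arising in Proposition~\ref{la:ulapuest}) or to track the estimates in an appropriate truncated or weighted form.
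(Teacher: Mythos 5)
Your opening reduction is fine: by the duality $\bigl(L^{(\frac{n}{n-s},1)}\bigr)^* = L^{(\frac{n}{s},\infty)}$, self-adjointness of $\lapms{\frac n2}$, Cauchy--Schwarz and Proposition~\ref{pr:orliczsobolev}, the proposition would indeed follow from the inequality
$\vrac{h\,\lapms{n-s}f}_{L\log^{1/2}L}\aleq \vrac{h}_{L^{(\frac{n}{n-s},1)}}\vrac{f}_{L\log^{1/2}L}$.
But this is precisely where the proof stops being a proof. That inequality cannot be obtained by composing the mapping property $\lapms{n-s}\colon L\log^{1/2}L\to L^{(\frac ns,2)}$ with a H\"older-type product estimate, because the product estimate $L^{(\frac{n}{n-s},1)}\times L^{(\frac ns,2)}\to L\log^{1/2}L$ is \emph{false}: taking $h=\delta^{-\frac{n-s}{n}}\chi_E$ and $G=\delta^{-\frac sn}\chi_E$ with $|E|=\delta$, both factors have norm $O(1)$ uniformly in $\delta$, while $hG=\delta^{-1}\chi_E$ has $\vrac{hG}_{L\log^{1/2}L}\approx\log^{1/2}(1/\delta)\to\infty$. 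So any proof of your intermediate inequality must exploit the specific structure of $\lapms{n-s}f$, and your proposed route for doing so --- a second dualization against $\phi\in (L\log^{1/2}L)^*\simeq \exp(L^2)$, reducing to a bound on $\lapms{n-s}(h\phi)$ --- runs into the same borderline wall: $h\phi$ is not in $L^{(\frac{n}{n-s},1)}$ (where $\lapms{n-s}$ would map into $L^\infty$) because $\phi$ is only exponentially integrable, and you acknowledge this without resolving it. As written, the argument has an unproven core claim and a sketch for it that does not close; this is a genuine gap, not a technicality.

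The paper avoids all of these endpoint pairings by a pointwise splitting of the product kernel. Writing
\[
\lapms{\tfrac n2}g\,\lapms{n-s}f(x)=\int\!\!\int \frac{g(z)\,f(y)}{|x-z|^{\frac n2}|x-y|^{s}}\,dz\,dy,
\]
one observes that for any $\eps\in(0,s)$ the kernel is bounded by $|x-z|^{-\frac n2-\eps}|x-y|^{\eps-s}+C|y-z|^{-\frac n2}|x-y|^{-s}$ (distinguish $|y-z|\ge 2|x-z|$, where $|x-z|\le|x-y|$, from its complement). This gives the pointwise bound
\[
|\lapms{\tfrac n2}g\,\lapms{n-s}f|\le \lapms{\frac n2-\eps}(|g|)\,\lapms{n-s+\eps}(|f|)+C\,\lapms{n-s}\bigl(|f|\,\lapms{\frac n2}(|g|)\bigr).
\]
The first term has slack $\eps$ and is handled by ordinary Sobolev and Lorentz--H\"older estimates using only $\vrac{f}_{L^1}$; the second term is exactly the object of Proposition~\ref{borderline1}, which is where the single genuinely borderline step (via Proposition~\ref{pr:orliczsobolev}) is confined. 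I would encourage you to adopt this decomposition rather than trying to repair the double-duality scheme.
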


\begin{proof} We write
$$\lapms{\frac{n}{2}} g\, \lapms{n-s}f(x)=\int_{\R^n}\int_{\R^n} k(x,y,z) g(z) f(y) dz dy,\quad k(x,y,z):=\frac{1}{|x-z|^\frac n2 |x-y|^s}.$$
For $\eps\in (0,s)$ we can now bound (cf. \cite{Sfracenergy})
$$k(x,y,z) \le |x-z|^{-\frac n2-\eps}|x-y|^{\eps-s}+ C|y-z|^{-\frac n2}|x-y|^{-s}=:II+III.$$
Indeed if $|y-z|\ge 2|x-z|$, then $|x-z|\le |x-y|$ (by the triangular inequality), hence $I\le II$. If $|y-z|\le 2|x-z|$, then $I\le C\,III.$
Therefore we have
$$|\lapms{\frac{n}{2}} g\, \lapms{n-s}f|\le I_{\frac n2-\eps} (|g|)\, I_{n-s+\eps}(|f|)+ CI_{n-s}(|f|\,I_{\frac n2}(|g|)).$$
The first term on the right-hand side can be bounded as
\[
\|  \lapms{\frac{n}{2}-\eps} g\, \lapms{n+\eps - s} f
\|_{(\frac{n}{s},\infty)} \aleq \|\lapms{\frac{n}{2}-\eps} (|g|)\|_{L^\frac{n}{\eps}}\, \|\lapms{n+\eps - s} (|f|)\|_{(\frac{n}{s-\eps},\infty)}
 \aleq \|g\|_{L^2}\,\|f\|_{L^1},
 \]
while the second term can be bounded by Proposition~\ref{borderline1}.
\end{proof}

\subsection{Disjoint-support estimates}

When $\operatorname{supp} \varphi \subset K$ for a compact set $K$ then in general we have no information on the support of $\laps{s} \varphi$, since $\laps{s}$ is a non-local operator. In particular $\laps{s} \varphi(x) \neq 0$ also for $x$ far away from $K$. However, there is a decay of $|\laps{s} \varphi(x)|$ as $\dist(x,K) \to \infty$. We shall call this pseudo-local behavior of $\laps{s}$.
It has been used allover the literature, for statements in the following form see \cite{BRS}.

\begin{definition}[Cut-off functions]\label{def:cutoff} With $\theta_{B_1}$ we will denote a fixed smooth function $\theta_{B_1}\in C^\infty_c(B_2)$ with $\theta_{B_1}\equiv 1$ in $B_1$ and $0\le \theta_{B_1}\le 1$ everywhere. Define
$$\theta_{B_\rho}(x):= \theta_{B_1}(x/\rho)\in C^\infty_c(B_{2\rho}),\quad \theta_{A_\rho}:=\theta_{B_\rho}-\theta_{B_\frac\rho2}\in C^\infty_c(B_{2\rho}\setminus B_{\frac \rho2}).$$
\end{definition}

In the proof of Proposition \ref{la:ulapuest} we used the following ``disjoint-support'' commutator estimate. 
Compared to the usual commutator estimates \cite{CRW,Seps} the estimates here are simpler, due to the disjoint support. Note that going through the proof, one may obtain a BMO-estimate, which is false for the commutator without disjoint support, see \cite{DLZ}.

\begin{proposition}\label{la:disjcommutator}
Define the commutator $[u,\lapms{t}](v) := u\lapms{t} v - \lapms{t} (uv)$.
Then for any $u, v \in C_c^\infty(\R^n)$,
\[
\vrac{ [u,\lapms{n-s}]((1-\theta_{B_{2\rho}})(-\Delta)^\frac n4 f)}_{L^{(\frac {n}{s},\infty)}(B_\rho)} \aleq \| (-\lap)^{\frac{n}{4}} u \|_{L^2(\R^n)}  \| f\|_{L^2(\R^n)}.
\]
\end{proposition}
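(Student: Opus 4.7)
The proof exploits the disjoint support between $x \in B_\rho$ and $\operatorname{supp}((1-\theta_{B_{2\rho}})(-\Delta)^{n/4} f) \subset B_{2\rho}^c$. Writing $\eta := 1-\theta_{B_{2\rho}}$ and $g := (-\Delta)^{n/4}f$, the commutator admits the pointwise integral representation
\[
 [u, I_{n-s}](\eta g)(x) = c_{n,s}\int_{|x-y|\ge \rho} \frac{(u(x) - u(y))\,\eta(y)\,g(y)}{|x-y|^s}\, dy,\qquad x\in B_\rho,
\]
so its singular-integral character is lost: the kernel is everywhere bounded by $\rho^{-s}$ on $B_\rho$, and combined with the scaling $\|1\|_{L^{(n/s,\infty)}(B_\rho)} \asymp \rho^s$ a scale-invariant dual bound would suffice.

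The plan is to dualize and transfer derivatives. For $\varphi \in C_c^\infty(B_\rho)$, Fubini together with the identity $\int \varphi(x)(u(x)-u(y))|x-y|^{-s}\,dx = -[u,I_{n-s}]\varphi(y)$ and self-adjointness of $(-\Delta)^{n/4}$ yield
\[
 \int \varphi\, [u, I_{n-s}](\eta g)\, dx = -c\int f\;(-\Delta)^{n/4}\bigl(\eta\,[u,I_{n-s}]\varphi\bigr)\, dy.
\]
Cauchy--Schwarz and duality between $L^{(n/s,\infty)}(B_\rho)$ and $L^{(n/(n-s),1)}(B_\rho)$ then reduce the claim to the $L^2$-estimate
\[
 \bigl\|(-\Delta)^{n/4}\bigl(\eta\,[u, I_{n-s}]\varphi\bigr)\bigr\|_{L^2(\R^n)} \aleq \|(-\Delta)^{n/4}u\|_{L^2}\;\|\varphi\|_{L^{(n/(n-s),1)}(B_\rho)}.
\]

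To obtain this dual bound I would expand $[u, I_{n-s}]\varphi = u\,I_{n-s}\varphi - I_{n-s}(u\varphi)$, insert the Riesz representation $u = I_{n/2}((-\Delta)^{n/4}u)$, and apply a fractional Leibniz rule to distribute $(-\Delta)^{n/4}$ between the smooth cutoff $\eta$ (for which $(-\Delta)^{n/4}(\eta-1) = -(-\Delta)^{n/4}\theta_{B_{2\rho}}$ has scale-invariant $L^2$-norm) and the interior commutator (which is smooth on $\operatorname{supp}\eta$ thanks to the disjoint support from $\operatorname{supp}\varphi$). The resulting iterated Riesz-potential products of $(-\Delta)^{n/4}u \in L^2$ and $\varphi \in L^{(n/(n-s),1)}$ fall precisely within the scope of the borderline Lorentz estimates of Propositions \ref{borderline1}--\ref{borderline2}.

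The principal difficulty is that $u \in H^{n/2}$ is not in $L^\infty$, so a naive Kato--Ponce inequality cannot control $(-\Delta)^{n/4}(u\cdot I_{n-s}\varphi)$ in $L^2$; one loses a logarithm at the endpoint. The borderline $L\log^{1/2}L \leftrightarrow L^{(n/s,\infty)}$ estimates of Propositions \ref{borderline1}--\ref{borderline2} are tailored exactly to absorb this logarithmic loss. A secondary subtlety is the non-locality of $(-\Delta)^{n/4}$: even though $\eta$ and $\varphi$ have disjoint supports, the fractional derivative couples them, and one must carefully expand $(-\Delta)^{n/4}$ of the product and bound all resulting cross-terms, using that the coupling kernel decays in the separation $\rho$.
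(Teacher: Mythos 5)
Your dualization is algebraically sound, and you correctly identify that the disjoint supports remove the singularity of the kernel at $x=y$. But the difficulty in this proposition is not the local singularity; it is the \emph{far field}, and your proposal never actually confronts it. After your pointwise representation the integration runs over all of $\{|x-y|\ge\rho\}$. With only $(-\lap)^{\frac n4}u\in L^2$ at your disposal (i.e.\ $u\in {\rm BMO}$, not $L^\infty$ -- you note this yourself), the difference $u(x)-u(y)$ grows as $y\to\infty$, and with only $(-\lap)^{\frac n4}f\in L^2$ the tail $\int_{|x-y|\ge\rho}|x-y|^{-s}|(-\lap)^{\frac n4}f(y)|\,dy$ need not even converge when $s\le \frac n2$. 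The paper's proof lives or dies on the resolution of exactly this competition: it decomposes $1-\theta_{B_{2\rho}}=\sum_{\ell\ge2}\theta_{A_{2^\ell\rho}}$, replaces $u$ on each annulus by the mean-subtracted localization $u_\ell=\theta_{B_{2^{\ell+2}}}(u-(u)_{B_{2^{\ell+2}}})$ so that the BMO bound yields the controlled growth $\|u_\ell\|_{L^p}\aleq 2^{\ell n/p}\|(-\lap)^{\frac n4}u\|_{L^2}$, and beats this with the decay $\|\lapms{n-s}(\theta_{A_{2^\ell\rho}}(-\lap)^{\frac n4}f)\|_{L^\infty(B_\rho)}\aleq 2^{-\ell s}\|f\|_{L^2}$ from Lemmas~\ref{la:freakshowestimate1} and~\ref{la:freakshowestimate2}, choosing $n/p<s$ so the geometric series converges. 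Nothing in your outline plays the role of this decomposition or of the mean subtraction, and without them the argument does not close.

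Two further concrete problems. First, the tools you invoke are the wrong ones: Propositions~\ref{borderline1} and~\ref{borderline2} take a datum in $L\log^{1/2}L$ and produce bounds in $L^{(\frac ns,\infty)}$, whereas the dual estimate you reduced to is an $L^2$ bound of $(-\lap)^{\frac n4}$ of a product, with no $L\log^{1/2}L$ quantity anywhere in the hypotheses of the present proposition; they simply do not apply. Second, any fractional Leibniz / three-term-commutator expansion of $(-\lap)^{\frac n4}\bigl(\eta\,u\,\lapms{n-s}\varphi\bigr)$ (cf.\ Theorem~\ref{th:bicomest}) produces a term of the form $\|u\|_{L^q}\cdot\|(-\lap)^{\frac n4}(\cdots)\|_{L^{n/t}}$, and $\|u\|_{L^q}$ is \emph{not} controlled by $\|(-\lap)^{\frac n4}u\|_{L^2}$ alone: any such control carries a constant depending on ${\rm supp}\,u$, which destroys the scale invariance in $\rho$ that the application requires ($\rho=Rr_k\to0$ in Proposition~\ref{la:ulapuest}). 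This is precisely the obstruction that the paper's mean-subtracted $u_\ell$ are designed to remove. To repair your proof you would need to import both the dyadic annular decomposition and the local mean subtraction, at which point you would essentially be reproducing the paper's argument.
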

\begin{proof}
By scaling we can assume that $\rho = 1$. With $\theta_\ell:=\theta_{A_{2^\ell}}\in C^\infty_c (B_{2^{\ell+1}} \backslash B_{2^{\ell-1}})$ as in Definition \ref{def:cutoff} we have pointwise in $\R^n$
\[
 1-\theta_{B_2} = \sum_{l=2}^\infty \theta_{\ell}.
\]
Moreover for $t \in (0,\infty)$ and $p \in [1,\infty]$ we have
\[
 \| \laps{t} \theta_{\ell} \|_{L^p(\R^n)} \leq C_{t,p}\ 2^{\ell (\frac{n}{p}-t)}.
\]
Since $u, v \in C_c^\infty(\R^n)$, we have then
\begin{equation}\label{eqcom}
 |[u,\lapms{n-s}]((1-\theta_{B_2})(-\Delta)^\frac n4 f)| \leq \sum_{\ell=2}^\infty |u\lapms{n-s}(\theta_{\ell}(-\Delta)^\frac n4 f)-\lapms{n-s}(u \theta_{\ell}(-\Delta)^\frac n4 f)|.
\end{equation}
Now set
\[
 u_{\ell} := \theta_{B_{2^{\ell+2}}} (u-(u)_{B_{2^{\ell+2}}}).
\]
Since $\theta_{B_{2^{\ell+2}}} \equiv 1$ in $B_{2^{\ell+2}}\supset B_1 \cup\operatorname{supp} \theta_{\ell}$, and the constant $(u)_{B_{2^{\ell+2}}}$ commutes with $\lapms{n-s}$, multiplying each term on the right-hand side of \eqref{eqcom} by $\theta_{B_{2^{\ell+2}}}$ and summing and subtracting the term
$$\theta_{B_{2^{\ell+1}}}(u)_{B_{2^{\ell+1}}} I_{n-s}(\theta_\ell (-\Delta)^\frac n4 f),$$ we find
\[\begin{split}
 |[u,\lapms{n-s}]((1-\theta_{B_2})(-\Delta)^\frac n4 f)|& \leq \sum_{\ell=2}^\infty |u_\ell\lapms{n-s}(\theta_{\ell}(-\Delta)^\frac n4 f)| + \sum_{\ell=2}^\infty |\lapms{n-s}(u_\ell \theta_{\ell}(-\Delta)^\frac n4 f)| \\
 &=: \sum_{\ell=2}^\infty ((I)_{\ell} + (II)_{\ell})\quad \text{in }B_1.
 \end{split}
\]
Now, by H\"older inequality and Lemma~\ref{la:freakshowestimate1}
\[
\begin{split}
 \vrac{u_\ell\lapms{n-s}(\theta_{\ell}(-\Delta)^\frac n4 f)}_{L^{(\frac{n}{s},\infty)}(B_1)} & \aleq \vrac{u_\ell}_{L^\frac{n}{s}(B_1)} \|\lapms{n-s}(\theta_{\ell}(-\Delta)^\frac n4 f)\|_{L^\infty(B_1)} \\
 &\aleq \vrac{u_\ell}_{L^\frac{n}{s}(B_1)}\ 2^{-\ell s} \vrac{f}_{L^2(\R^n)}\\
 \end{split}
\]
Note that for any $p < \infty$,
\[
 \vrac{u_\ell}_{p,\R^n} \aleq C_p 2^{\ell\frac{n}{p}} [u]_{BMO} \aleq 2^{\ell \frac{n}{p}} \vrac{(-\lap)^\frac n4 u}_{2,\R^n}.
\]
Taking $p>\frac{n}{s}$ and $\delta=\frac {n}{p}$
\[
 \vrac{u_\ell}_{\frac{n}{s},B_1}\aleq \vrac{u_\ell}_{p,B_1}\aleq 2^{\ell \delta}\ \vrac{(-\lap)^\frac n4 u}_{2,\R^n}.
\]
Together, we arrive at
\[
 \vrac{u_\ell\lapms{n-s}(\theta_{\ell}(-\Delta)^\frac n4 f)}_{(\frac{n}{s},\infty),B_1}
\aleq 2^{\ell (\delta-s)}\ \vrac{(-\lap)^\frac n4 u}_{2,\R^n}\ \vrac{f}_{2,\R^n},
\]
and for $\delta < s$, this ensures
\begin{equation}\label{eq:sumII1l}
 \sum_{\ell =2}^\infty (I)_{\ell} \aleq \vrac{(-\lap)^\frac n4 u}_{2,\R^n}\ \vrac{f}_{2,\R^n}.
\end{equation}
It remains to treat $(II)_{\ell}$, and we do that with Lemma~\ref{la:freakshowestimate2}:
\begin{align*}
 \vrac{\lapms{n-s}(u_\ell\ \theta_{\ell}(-\Delta)^\frac n4 f)}_{(\frac{n}{s},\infty),B_1}
 \aleq &\vrac{\lapms{n-s}(u_\ell\ \theta_{\ell}(-\Delta)^\frac n4 f)}_{\frac{n}{s},B_1}\\
 \aleq &\max_{t \in [0,\frac{n}{2}]} \vrac{\laps{t}u_\ell}_2\ (2^\ell )^{t-\frac{n}{2} -s}\ \vrac{f}_{2,\R^n}
\end{align*}
Now for any $t \in [0,\frac{n}{2}]$, by the construction of $u_\ell$ and Poincar\'e and Sobolev-embeddings,
\[
 \vrac{\laps{t}u_\ell}_2 \aleq (2^\ell )^{\frac{n}{2}-t} \vrac{(-\lap)^\frac n4 u}_{L^2(\R^n)}.
\]
This leads to
\[
 \vrac{\lapms{n-s}(u_\ell \theta_{\ell}(-\Delta)^\frac n4 f)}_{(\frac{n}{2},\infty),B_1}
 \aleq 2^{-s\ell}\ \vrac{(-\lap)^\frac n4 u}_{2,\R^n}\ \vrac{f}_{2,\R^n}. 
\]
Again, this ensures
\begin{equation}\label{eq:estsumII2}
 \sum_{\ell =2}^\infty (II)_{\ell} \aleq \vrac{(-\lap)^\frac n4 u}_{2,\R^n}\ \vrac{f}_{2,\R^n}.
\end{equation}
\end{proof}

\begin{lemma}\label{la:nonloc}
Let $\varphi \in C_c^\infty(K)$ for some compact set $K$ and let $\Omega \subset \R^n$ be an open set containing $K$ with $\dist(\partial\Omega,K) \ge d$ for some $d > 0$. Then for any $p \in [1,\infty]$ and $s \in (0,\infty)$ we have
\[
 \vrac{\laps{s} \varphi}_{L^p(\R^n \backslash \Omega)} \le C_{n,s,p}d^{n-(n+s)p} \vrac{\varphi}_{L^1(K)},
\]
and for any $s \in (0,n)$ and $p > \frac{n}{n-s}$ we have
\[
 \vrac{\lapms{s} \varphi}_{L^p(\R^n\setminus\Omega)} \le C_{n,s,p} d^{-(\frac{n}{p'}-s)}\ \vrac{\varphi}_{L^1(K)}.
\]
\end{lemma}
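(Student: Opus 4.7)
\emph{Plan.} The plan is to obtain both estimates from a pointwise bound on $\R^n\setminus\Omega$, followed by an $L^p$-integration in the distance function. The key observation is that for $x\in\R^n\setminus\Omega$ and $y\in K$ one has $|x-y|\ge\dist(x,K)\ge d$, so the non-local operators collapse to an explicit, well-behaved integral kernel acting on a compactly supported function.

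For the Riesz potential estimate, I would write
\[
\lapms{s}\varphi(x)=c_{n,s}\int_K\frac{\varphi(y)}{|x-y|^{n-s}}\,dy\quad\text{for }s\in(0,n),
\]
which yields immediately $|\lapms{s}\varphi(x)|\aleq\dist(x,K)^{-(n-s)}\vrac{\varphi}_{L^1(K)}$. Taking $L^p(\R^n\setminus\Omega)$ reduces matters to estimating $\int_{\{\dist(x,K)\ge d\}}\dist(x,K)^{-(n-s)p}\,dx$. A simple layer-cake/radial computation, using that the $r$-neighbourhood of $K$ has polynomial volume growth $\aleq r^n$ at scales $r$ comparable to or larger than the size of $K$, bounds this by a constant times $d^{n-(n-s)p}$, which is finite precisely when $p>\frac{n}{n-s}$. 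Taking the $p$-th root recovers the exponent $-(n/p'-s)$ in the statement.

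For the fractional Laplacian estimate the argument splits by the size of $s$. When $s\in(0,2)$, the singular-integral formula together with $\varphi(x)=0$ for $x\notin K$ gives
\[
\laps{s}\varphi(x)=-c_{n,s}\int_K \frac{\varphi(y)}{|x-y|^{n+s}}\,dy,
\]
and the same pointwise and $L^p$ scheme applies with $-(n-s)$ replaced by $-(n+s)$. For general $s>0$ I would write $s=2k+s'$ with $k\in\N$ and $s'\in(0,2)$, note that $(-\Delta)^k\varphi\in C^\infty_c(K)$, and apply the $s'\in(0,2)$ representation to this compactly supported function. Since $x\notin K$, the kernel $y\mapsto|x-y|^{-(n+s')}$ is smooth on a neighbourhood of $K$, so integrating by parts $k$ times (no boundary terms thanks to $\varphi\in C^\infty_c(K)$) transfers $(-\Delta)^k_y$ onto the kernel, producing a finite sum of terms controlled by $C_{n,s}|x-y|^{-(n+s)}$ by homogeneity. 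This yields the pointwise bound $|\laps{s}\varphi(x)|\aleq\dist(x,K)^{-(n+s)}\vrac{\varphi}_{L^1(K)}$ and the $L^p$-integration proceeds exactly as before, giving (up to a $1/p$ in the exponent) the stated decay.

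The main technical point is the integration-by-parts reduction to $s'\in(0,2)$: one has to check that every term arising from $(-\Delta)^k_y\bigl(|x-y|^{-(n+s')}\bigr)$ is indeed dominated by $C_{n,s}|x-y|^{-(n+s)}$, which is a routine homogeneity/scaling verification. Beyond this, both estimates collapse to the elementary radial integral of $r^{-\alpha}$ on $\{r\ge d\}$ and are essentially immediate; no commutator or cut-off machinery is needed here because the disjointness of the supports completely regularises the kernel.
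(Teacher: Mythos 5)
Your overall strategy is the same as the paper's: away from $\operatorname{supp}\varphi$ the operators $\laps{s}$ and $\lapms{s}$ act by convolution with an explicit kernel of size $|x-y|^{-n-s}$ resp.\ $|x-y|^{s-n}$, and one then integrates in $L^p$ over the region $\{\dist(\cdot,K)\ge d\}$. Your reduction of general $s>0$ to $s'\in(0,2)$ by writing $s=2k+s'$ and integrating $(-\Delta)^k$ by parts onto the kernel is a legitimate, slightly more hands-on alternative to the paper's one-line appeal to $(|\cdot|^{s})^\wedge=c\,|\cdot|^{-s-n}$ away from the origin; the homogeneity check you defer is indeed routine ($\Delta_y|x-y|^{-\alpha}=\alpha(\alpha+2-n)|x-y|^{-\alpha-2}$, iterated $k$ times). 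You are also right that the first displayed exponent in the statement should read $d^{\frac{n}{p}-(n+s)}$ rather than $d^{\,n-(n+s)p}$ (the latter is its $p$-th power); your computation recovers the correct one.

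The one genuine defect is in the $L^p$ step for finite $p$. By replacing $\int_K|\varphi(y)|\,|x-y|^{-(n+s)}dy$ with the crude bound $\dist(x,K)^{-(n+s)}\|\varphi\|_{L^1(K)}$ and then integrating $\dist(x,K)^{-(n+s)p}$ by a layer-cake argument, you pick up the volume growth of the $r$-neighbourhoods of $K$; for $d\ll\operatorname{diam}K$ this yields an extra factor of order $(\operatorname{diam}K/d)^{n/p}$, so your constant depends on $K$, whereas the lemma asserts $C_{n,s,p}$ only (and this uniformity is what makes the scaling arguments in Lemmas~\ref{la:freakshowestimate1} and \ref{la:freakshowestimate2} clean). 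The fix is to not discard the convolution structure: since $|x-y|\ge d$ for $x\in\R^n\setminus\Omega$, $y\in K$, one has the pointwise domination $|\chi_{\R^n\setminus\Omega}\laps{s}\varphi|\le\bigl(|\cdot|^{-n-s}\chi_{\{|\cdot|\ge d\}}\bigr)\ast|\varphi|$, and Young's inequality gives $\||\cdot|^{-n-s}\chi_{\{|\cdot|\ge d\}}\|_{L^p}\|\varphi\|_{L^1}$ with the kernel norm computed exactly and independently of $K$. This is precisely what the paper does; your argument as written proves the lemma only with a constant depending on $\operatorname{diam}K/d$ (for $p=\infty$ the two routes coincide and your bound is already the stated one).
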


\begin{proof}
Since convolution and multiplication are transformed into each other under Fourier transform and $(|\cdot|^s)^\wedge = c |\cdot|^{-s-n}$, for $x$ away from the support of $\varphi$ we have
$$\laps{s} \varphi(x) = c_{n,s}|\cdot|^{-n-s}\ast \varphi (x).$$
In particular
\[
 |\chi_{\R^n \backslash \Omega} \laps{s} \varphi| \leq \big(|\cdot|^{-n-s}\chi_{|\cdot| \geq \frac{d}{2}} \big)\ast \varphi.
\]
Now the first claim follows by Young's inequality:
\[
  \vrac{\chi_{\R^n \backslash \Omega} \laps{s} \varphi}_{L^p(\R^n)} \aleq \vrac{|\cdot|^{-n-s}\chi_{|\cdot| \geq \frac{d}{2}}}_{L^p(\R^n)}\ \vrac{\varphi}_{L^1(\R^n)}.
\]
The proof of the second claim is very similar.
\end{proof}

\begin{lemma}\label{disjointoperator}
Consider two functions $\theta_1,\theta_2 \in C_c^\infty(\R^n)$. Suppose that $\theta_1$ and $\theta_2$ have disjoint support, i.e. for some $d > 0$,
\begin{equation}\label{eq:do:disjointK}
 \dist (\operatorname{supp} \theta_1, \operatorname{supp} \theta_2 ) \ge d.
\end{equation}
For $s \in (0,n)$ let the operator $T$ be defined via 
\[
 Tf := \theta_1\, \lapms{s} (\theta_2 f), \quad f \in \Sw(\R^n).
\]
Then for any $t > 0$ the operator $ T \laps{t}$, originally defined on $\mathcal{S}(\R^n)$, extends to a linear bounded operator from $L^p(\R^n)$ to $L^q(\R^n)$ whenever $1+\frac{1}{q}- \frac{1}{p} \in [0,1]$,  with the estimate
\[
 \|T \laps{t}f \|_{L^q(\R^n)} \le C_{\theta_1,\theta_2,p,q,t} \|f\|_{L^p(\R^n)}
\]
\end{lemma}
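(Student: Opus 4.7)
The plan is to realise $T\laps{t}$ as an integral operator whose kernel is pointwise bounded and decays in $y$ fast enough to compensate for the lack of compact support in the second variable. Thanks to the disjoint support \eqref{eq:do:disjointK}, this kernel will be smooth.

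Concretely, for $f \in \mathcal{S}(\R^n)$ and each $x \in \operatorname{supp} \theta_1$ define
\[
\psi_x(y) := \frac{c_{n,s}\, \theta_2(y)}{|x-y|^{n-s}}.
\]
By \eqref{eq:do:disjointK} we have $\psi_x \in C_c^\infty(\operatorname{supp}\theta_2)$ with $C^k$-norms bounded uniformly in $x \in \operatorname{supp}\theta_1$ by constants depending only on $d$, $\theta_2$, $s$ and $k$. Since $f$ and $\psi_x$ are both Schwartz, the self-adjointness of $\laps{t}$ and Fubini give
\[
T\laps{t} f(x) = \theta_1(x) \int_{\R^n} \psi_x(y)\, \laps{t} f(y)\, dy = \int_{\R^n} L(x,y)\, f(y)\, dy,
\]
where $L(x,y) := \theta_1(x)\, \laps{t}\psi_x(y)$. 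This is the only step that requires a little attention, and it is immediate because $\psi_x$ has uniformly bounded Schwartz seminorms, which also ensures that $L$ is jointly measurable.

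Next I would bound the kernel. On any fixed compact set $L$ is smooth and uniformly bounded in $x \in \operatorname{supp}\theta_1$. For $|y|$ large, pick $R$ so that $\operatorname{supp}\theta_2 \Subset B_{R/2}$ and apply Lemma~\ref{la:nonloc} with $\varphi = \psi_x$ to obtain the decay $|\laps{t}\psi_x(y)| \leq C(1+|y|)^{-n-t}$ uniformly in $x \in \operatorname{supp}\theta_1$. Combining, with $U := \operatorname{supp}\theta_1$,
\[
|L(x,y)| \leq C\, \chi_U(x)\, (1+|y|)^{-n-t}.
\]

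Since $t > 0$, the function $(1+|y|)^{-n-t}$ lies in $L^{p'}(\R^n)$ for every $p' \in [1,\infty]$, with norm controlled uniformly. H\"older's inequality in $y$ gives the pointwise bound $|T\laps{t} f(x)| \leq C\, \chi_U(x)\, \|f\|_{L^p(\R^n)}$, and then, taking the $L^q$ norm in $x$ and using $|U| < \infty$,
\[
\|T\laps{t} f\|_{L^q(\R^n)} \leq C\,|U|^{1/q}\, \|f\|_{L^p(\R^n)}
\]
for every $p, q \in [1,\infty]$ with $p \leq q$, which is precisely the range $1 + 1/q - 1/p \in [0,1]$. Density of $\mathcal{S}(\R^n)$ in $L^p(\R^n)$ for $p < \infty$ produces the desired bounded extension, and for $p = \infty$ the integral kernel representation furnishes the extension directly. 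There is no real obstacle: the disjoint-support hypothesis turns $\laps{t}$ into a smoothing operator on $\psi_x$ via Lemma~\ref{la:nonloc}, and everything else is H\"older and density.
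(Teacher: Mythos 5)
Your proof is correct and takes essentially the same route as the paper's: both move $\laps{t}$ onto the kernel $\theta_1(x)\,\theta_2(y)\,|x-y|^{s-n}$ by self-adjointness and then exploit the pseudo-local decay $|\laps{t}\psi_x(y)|\le C(1+|y|)^{-n-t}$, uniform in $x\in\operatorname{supp}\theta_1$, coming from the disjoint supports. The only (harmless) difference is the final step, where the paper invokes a Young/Schur-type inequality with $1+\frac1q=\frac1p+\frac1r$, while you use the product structure $|L(x,y)|\le C\chi_U(x)(1+|y|)^{-n-t}$ and H\"older, which in fact gives the bound for all $p,q\in[1,\infty]$.
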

\begin{proof}
First set
\[
 k(x,y) := |x-y|^{s-n}\, \theta_1(x)\, \theta_2(y).
\]
Notice that based on $\theta_1$ and $\theta_2$ and the disjoint support of the two functions \eqref{eq:do:disjointK} we can find $\theta_3 \in C^\infty_c(\R^n)$, $\theta_3 \equiv 0$ in the ball $B_{d/2}$ so that
\[
 k(x,y) = \theta_3(x-y) |x-y|^{s-n}\ \theta_1(x)\ \theta_2(y)\ 
\]
Note that 
\[
 \theta_3(\cdot) |\cdot|^{s-n} \in C^\infty(\R^n)
\]
In particular, $k(\cdot,y) \in C_c^\infty(\R^n)$ for any $y \in \R^n$ and $k(x,\cdot) \in C_c^\infty(\R^n)$ for any $x \in \R^n$. Morever, $\laps{t}_x k(x,\cdot) \in C_c^\infty(\R^n)$ for any $x \in \R^n$. Then for $f \in \Sw(\R^n)$
\[
\begin{split}
 T \laps{t} f(x) =&  \int_{\R^n} k(x,y) \laps{t}_y f(y)\, dy\\
=&\int_{\R^n} \laps{t}_y k(x,y)\,  f(y)\, dy,
\end{split}
\]
where we integrated by parts.

Setting
\[
 \tilde{k}(x,y) := \laps{t}_y k(x,y),
\]
and using that $(-\Delta)^\frac t2 \varphi(y)$ decays like $|y|^{-n-t}$ at infinity if $\varphi$ is compactly supported, we bound
$$\sup_{x \in \R^n} \|\tilde{k}(x,\cdot)\|_{L^{r}(\R^n)}<\infty,\quad \sup_{y \in \R^n}\|\tilde{k}(\cdot,y)\|_{L^{r}(\R^n)}<\infty$$
for every $r\in [1,\infty]$.
Then, by a straightforward adaption of Young's convolution inequality, if $1+\frac{1}{q} = \frac{1}{p} + \frac{1}{r}$ we get
\[\begin{split}
 \vrac{T \laps{t} f}_{L^q(\R^n)} &\leq \left(\sup_{x \in \R^n} \|\tilde{k}(x,\cdot)\|_{L^{r}(\R^n)}+\sup_{y \in \R^n}\|\tilde{k}(\cdot,y)\|_{L^{r}(\R^n)}\right)\, \vrac{f}_{L^p(\R^n)}\\
 & = C_{\theta_1,\theta_2,t,p,q}\, \vrac{f}_{L^p(\R^n)}. 
\end{split}
\]
\end{proof}

In some special cases we need to compute the constant in the Lemma above.
\begin{lemma}\label{la:freakshowestimate1}
For any $p \in [1,\infty]$, $q \in [1,\infty)$, any $\rho > 0$, $k \geq 2$, $s \in (0,n)$, we have the following estimate for any $f \in \Sw(\R^n)$,
\begin{equation}\label{eq:freakshowestimate1est1}
 \vrac{\lapms{s} (\theta_{A_{2^k\rho}} \laps{t} f)}_{L^p(B_\rho)} \aleq (2^k \rho)^{s-t-\frac{n}{q}}\ \rho^{\frac{n}{p}}\, \vrac{f}_{L^q(\R^n)}.
\end{equation}
Similarly, for any $g \in C_c^\infty(B_\rho)$,
\[
 \vrac{\laps{t} (\theta_{A_{2^k\rho}} \lapms{s} g)}_{L^{q'}(\R^n)}  \aleq (2^k \rho)^{s-t-\frac{n}{q}}\ \rho^{\frac{n}{p}} \vrac{g}_{L^{p'}(B_\rho)}.
\]

\end{lemma}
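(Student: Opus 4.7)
The plan is to reduce by scaling to the case $\rho=1$ and then exploit the disjoint-support structure: since $k\ge 2$, the annulus $B_{2^{k+1}}\setminus B_{2^{k-1}}$ containing $\operatorname{supp}\theta_{A_{2^k}}$ is disjoint from $B_1$ at distance $\ge 2^{k-1}-1$, so the kernel $|x-y|^{s-n}\theta_{A_{2^k}}(y)$ is smooth and compactly supported in $y$ for $x\in B_1$. The scaling reduction is routine: setting $\tilde f(y):=f(\rho y)$ and using the homogeneities $\laps{t}[f(\rho\cdot)](x)=\rho^t(\laps{t}f)(\rho x)$, $\lapms{s}[h(\rho\cdot)](x)=\rho^{-s}(\lapms{s}h)(\rho x)$, and $\theta_{A_{2^k\rho}}(\cdot)=\theta_{A_{2^k}}(\cdot/\rho)$, the full estimate rearranges into the $\rho=1$ estimate applied to $\tilde f$ multiplied by exactly $(2^k\rho)^{s-t-n/q}\rho^{n/p}$.

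For the $\rho=1$ estimate, fix $x\in B_1$ and set $\Phi_x(y):=c_{n,s}|x-y|^{s-n}\theta_{A_{2^k}}(y)\in C^\infty_c(B_{2^{k+1}}\setminus B_{2^{k-1}})$. Since $f\in\Sw(\R^n)$, integration by parts gives
\[
 \lapms{s}(\theta_{A_{2^k}}\laps{t}f)(x) = \int_{\R^n}\Phi_x(y)\,\laps{t}f(y)\,dy = \int_{\R^n}\laps{t}\Phi_x(y)\,f(y)\,dy,
\]
so by H\"older it suffices to bound $\|\laps{t}\Phi_x\|_{L^{q'}(\R^n)}$ by $2^{k(s-t-n/q)}$ uniformly in $x\in B_1$. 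To see this I rescale by $2^k$: with $\bar x:=2^{-k}x\in B_{1/4}$ and $\Psi_{\bar x}(z):=|\bar x-z|^{s-n}\theta_{A_1}(z)$, the identity $\theta_{A_{2^k}}(2^k\cdot)=\theta_{A_1}$ yields $\Phi_x(y)=c_{n,s}2^{k(s-n)}\Psi_{\bar x}(y/2^k)$, and the scaling rule for $\laps{t}$ gives
\[
 \laps{t}\Phi_x(y)=c_{n,s}\,2^{k(s-n-t)}(\laps{t}\Psi_{\bar x})(y/2^k).
\]
Since $|\bar x-z|\in [1/4,9/4]$ on $\operatorname{supp}\theta_{A_1}\subset B_2\setminus B_{1/2}$, the family $\{\Psi_{\bar x}\}_{\bar x\in B_{1/4}}$ is uniformly bounded in $C^\infty_c(\R^n)$, so $\laps{t}\Psi_{\bar x}$ is smooth and decays like $|z|^{-n-t}$, producing $\sup_{\bar x\in B_{1/4}}\|\laps{t}\Psi_{\bar x}\|_{L^{q'}(\R^n)}\le C_{t,q}$ for every $q'\in[1,\infty]$. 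Rescaling back yields $\|\laps{t}\Phi_x\|_{L^{q'}(\R^n)}\aleq 2^{k(s-t-n+n/q')}=2^{k(s-t-n/q)}$, and the trivial embedding $L^\infty(B_1)\hookrightarrow L^p(B_1)$ with constant $|B_1|^{1/p}\aleq 1$ handles every $p\in[1,\infty]$.

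The dual statement follows from the first by duality: for $g\in C^\infty_c(B_\rho)$ and any $\phi\in C^\infty_c(\R^n)$ with $\|\phi\|_{L^q(\R^n)}\le 1$, the disjoint supports of $g$ and $\theta_{A_{2^k\rho}}$ legitimize moving $\laps{t}$ and $\lapms{s}$ across the pairing to produce
\[
 \int_{\R^n}\laps{t}(\theta_{A_{2^k\rho}}\lapms{s}g)\,\phi\,dx = \int_{B_\rho}g\,\lapms{s}(\theta_{A_{2^k\rho}}\laps{t}\phi)\,dx,
\]
and the first estimate applied to $\phi$ combined with H\"older bounds the right-hand side by $(2^k\rho)^{s-t-n/q}\rho^{n/p}\|g\|_{L^{p'}(B_\rho)}$; taking the supremum over $\phi$ closes the case. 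The main obstacle here is not conceptual but bookkeeping: tracking the powers of $2^k$ and $\rho$ through the two nested rescalings (first by $\rho$, then by $2^k$) and verifying that the $L^{q'}(\R^n)$-bound on $\laps{t}\Psi_{\bar x}$ is uniform in $\bar x\in B_{1/4}$, which ultimately rests on the smooth dependence of $\Psi_{\bar x}$ on $\bar x$ away from the singularity of $|\bar x-z|^{s-n}$.
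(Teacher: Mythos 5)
Your proof is correct and follows essentially the same route as the paper's: reduce to $\rho=1$ by scaling, use the disjoint supports of $B_1$ and $\theta_{A_{2^k}}$ to move $\laps{t}$ onto the smooth, compactly supported kernel $|x-y|^{s-n}\theta_{A_{2^k}}(y)$, and obtain the second estimate by duality. The only difference is organizational: the paper factors the kernel argument through Lemma~\ref{disjointoperator} at $k=2$ and recovers the $2^k$-dependence by a second scaling, whereas you inline the kernel computation with the $2^k$-rescaling built in.
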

\begin{proof}
The second estimate follows from the first one by duality. Indeed
\[
 \begin{split}
\vrac{\laps{t} (\theta_{A_{2^k\rho}} \lapms{s} g)}_{L^{q'}(\R^n)}
& \aleq \sup_{f\in \Sw(\R^n),\|f\|_{L^q(\R^n)\le 1}}  \int_{\R^n} f\, \laps{t} (\theta_{A_{2^k\rho}} \lapms{s} g)\, dx \\
& =  \sup_{f\in \Sw(\R^n),\|f\|_{L^q(\R^n)}\le 1} \int_{\R^n} \lapms{s}(\theta_{A_{2^k\rho}} \laps{t}f)\, g\, dx\\
&\aleq  (2^k \rho)^{s-t-\frac{n}{q}}\ \rho^{\frac{n}{p}}\ \vrac{g}_{L^{p'}(\R^n)},
 \end{split}
\]
where we used integration by parts twice, cf. \eqref{eq:fracPI}, and \eqref{eq:freakshowestimate1est1}.

The estimate \eqref{eq:freakshowestimate1est1} for $p\in [1,\infty)$ follows via H\"older's inequality from the case $p=\infty$ which we shall now prove. Up to scaling we can take $\rho=1$ and then \eqref{eq:freakshowestimate1est1} reduces to 
\begin{equation}\label{eq:freakshowestimate1est1b}
\vrac{\lapms{s} (\theta_{A_{2^k}} \laps{t} f)}_{L^\infty(B_1)} \aleq (2^k)^{s-t-\frac{n}{q}} \vrac{f}_{L^q(\R^n)}.
\end{equation}
For $k=2$ \eqref{eq:freakshowestimate1est1b} follows from Lemma~\ref{disjointoperator}:
\[
\vrac{\lapms{s} (\theta_{A_4} \laps{t} f)}_{L^\infty(B_1)} \le C_1 \vrac{f}_{L^q(\R^n)},
\]
with $C_1$ depending on $s,t,n,q$ and the chosen cut-off function $\theta_{B_1}$ (fixed in Definition \ref{def:cutoff}).
The case $k> 2$ follows from the case $k=2$ by scaling:
\[\begin{split}
\vrac{\lapms{s} (\theta_{A_{2^{k+2}}} \laps{t} f)}_{L^\infty(B_1)}&\le \vrac{\lapms{s} (\theta_{A_{2^{k+2}}} \laps{t} f)}_{L^\infty(B_{2^{k}})}\\
 &=(2^k)^{s-t}\vrac{\lapms{s} (\theta_{A_4} \laps{t} f(2^k\cdot))}_{L^\infty(B_1)}\\
 &\le C_1 (2^k)^{s-t}\|f(2^k\cdot)\|_{L^q(\R^n)}\\
 &= C_1 (2^k)^{s-t-\frac nq}\|f\|_{L^q(\R^n)}.
\end{split}\]
\end{proof}

Considering above $\theta_{A_{2^k \rho}} g$ instead of $\theta_{A_{2^k \rho}}$ we also have the following:
\begin{lemma}\label{la:freakshowestimate2}
For any $\rho > 0$, $p \in (1,\infty)$
\[
 \vrac{\lapms{s} (\theta_{A_{2^k\rho}} g (-\lap)^\frac n4 f)}_{L^p(B_\rho)} \aleq \max_{t \in [0,\frac{n}{2}]} \vrac{\laps{t}g}_{L^2(\R^n)} (2^k \rho)^{t-\frac{n}{2} + s - n}\rho^{\frac{n}{p}}\ \vrac{f}_{L^2(\R^n)},
\]
for any $f,g\in \Sw(\R^n)$.
\end{lemma}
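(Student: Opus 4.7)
The plan is to reduce the estimate to a Kato--Ponce-type fractional Leibniz inequality combined with the second part of Lemma~\ref{la:freakshowestimate1}. By $L^p$--$L^{p'}$ duality, it suffices to bound, uniformly in test functions $h\in C_c^\infty(B_\rho)$ with $\|h\|_{L^{p'}(B_\rho)}\le 1$, the pairing
\[
\left|\int_{\R^n} h\cdot \lapms{s}(\theta_{A_{2^k\rho}}\, g\, (-\Delta)^{n/4} f)\, dx\right|.
\]
Moving $\lapms{s}$ onto $h$ and then $(-\Delta)^{n/4}$ onto the other factor---both integrations by parts being legitimate since $f,g\in\Sw(\R^n)$ and $\theta_{A_{2^k\rho}}\, g\cdot\lapms{s} h\in C_c^\infty(\R^n)$ (the factor $\lapms{s} h$ is smooth on the support of $\theta_{A_{2^k\rho}}$, which is disjoint from $\operatorname{supp} h\subset B_\rho$)---the pairing becomes
\[
\int_{\R^n} f\cdot (-\Delta)^{n/4}\!\big(g\cdot \theta_{A_{2^k\rho}}\,\lapms{s} h\big)\, dx,
\]
and Cauchy--Schwarz in $L^2$ extracts the factor $\vrac{f}_{L^2(\R^n)}$.

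It remains to estimate $\|(-\Delta)^{n/4}(g\cdot \theta_{A_{2^k\rho}}\lapms{s} h)\|_{L^2(\R^n)}$. For this I invoke a fractional Leibniz rule that distributes the $n/4$-fractional derivative between the two factors:
\[
\|(-\Delta)^{n/4}(g\,\Phi)\|_{L^2(\R^n)} \aleq \max_{t\in[0,\,n/2]} \|\laps{t} g\|_{L^2(\R^n)}\,\|\laps{n/2-t}\Phi\|_{L^\infty(\R^n)},
\]
applied with $\Phi:=\theta_{A_{2^k\rho}}\lapms{s} h$. The remaining $L^\infty$ factor is precisely the quantity controlled by the second assertion of Lemma~\ref{la:freakshowestimate1}, applied with its parameter ``$t$'' equal to $n/2-t$, $q=1$ (so $q'=\infty$), and the same $p$, giving
\[
\|\laps{n/2-t}(\theta_{A_{2^k\rho}}\lapms{s} h)\|_{L^\infty(\R^n)} \aleq (2^k\rho)^{s-(n/2-t)-n}\,\rho^{n/p}\,\|h\|_{L^{p'}(B_\rho)} = (2^k\rho)^{t-n/2+s-n}\,\rho^{n/p}\,\|h\|_{L^{p'}(B_\rho)}.
\]
Reassembling and taking the supremum over $h$ yields the claimed bound.

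The main obstacle is verifying the fractional Leibniz inequality in the above ``distributed'' form, since standard Kato--Ponce endpoints yield only the two extreme terms $\|\laps{n/2}g\|_{L^2}\|\Phi\|_{L^\infty}+\|g\|_{L^\infty}\|\laps{n/2}\Phi\|_{L^2}$, and crucially the second of these requires an $L^\infty$ bound on $g$ which is not available. The intermediate splits can be recovered via a Bony/Littlewood--Paley paraproduct decomposition of $g\Phi$ (low-high paraproduct bounded by pairing a Sobolev norm of $g$ against $L^\infty$ regularity of $\Phi$, the symmetric piece by pairing the reverse, and the resonant piece absorbed by either endpoint), or equivalently by bilinear complex interpolation between the endpoints; this is consistent since the scaling $\|\laps{t}g\|_{L^2}\|\laps{n/2-t}\Phi\|_{L^\infty}$ is $t$-independent in homogeneity.
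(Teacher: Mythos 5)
Your overall architecture coincides with the paper's: dualize against $h\in C_c^\infty(B_\rho)$, move $\lapms{s}$ and then $(-\lap)^{\frac n4}$ across the pairing, pull out $\vrac{f}_{L^2}$ by Cauchy--Schwarz, and reduce to an $L^2$ bound on $(-\lap)^{\frac n4}\bigl(g\,\theta_{A_{2^k\rho}}\lapms{s}h\bigr)$, with Lemma~\ref{la:freakshowestimate1} supplying the decay in $2^k\rho$. The gap is in the one step you yourself flag as the main obstacle: the ``distributed'' Leibniz rule $\|\laps{n/2}(g\Phi)\|_{L^2}\aleq\max_{t}\|\laps{t}g\|_{L^2}\|\laps{n/2-t}\Phi\|_{L^\infty}$ is not established by either of your two justifications. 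The paraproduct route fails at the low--high piece $T_g\Phi=\sum_j S_{j-3}g\,\Delta_j\Phi$: almost-orthogonality gives $\|\laps{n/2}T_g\Phi\|_{L^2}^2\approx\sum_j 2^{jn}\|S_{j-3}g\,\Delta_j\Phi\|_{L^2}^2$, and since $\|S_{j-3}g\|_{L^2}$ does not decay in $j$ while $\|\Delta_j\Phi\|_{L^\infty}\aleq 2^{-jn/2}\|\laps{n/2}\Phi\|_{L^\infty}$ is exactly critical, the term-by-term bound produces the divergent sum $\sum_j\|g\|_{L^2}^2\|\laps{n/2}\Phi\|_{L^\infty}^2$; choosing any other split $t>0$ only makes the $j$-growth worse. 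This is the familiar failure of $B^{n/2}_{\infty,\infty}$-type control of $\Phi$ (which is all that $\laps{n/2}\Phi\in L^\infty$ gives blockwise) to act on $H^{n/2}$ through the naive paraproduct bound. The alternative of ``bilinear complex interpolation between the endpoints'' is circular, because one of the two endpoints, $\|g\|_{L^\infty}\|\laps{n/2}\Phi\|_{L^2}$, is precisely the one you concede is unavailable ($H^{n/2}\not\hookrightarrow L^\infty$); you cannot interpolate from a missing estimate, and scale-invariance of the intermediate quantities is necessary but nowhere near sufficient.

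Two genuine ways to close this. The paper's way: do not attempt a two-term Leibniz rule at all, but write $\laps{n/2}(gb)=b\,\laps{n/2}g+g\,\laps{n/2}b+H_{n/2}(g,b)$ with $b=\theta_{A_{2^k\rho}}\lapms{s}\varphi$, bound $\|g\,\laps{n/2}b\|_{L^2}\le\|g\|_{L^{\frac{2n}{n-2t}}}\|\laps{n/2}b\|_{L^{n/t}}\aleq\|\laps{t}g\|_{L^2}\|\laps{n/2}b\|_{L^{n/t}}$ for a small $t>0$, and control the three-term commutator by Theorem~\ref{th:bicomest} as $\|H_{n/2}(g,b)\|_{L^2}\aleq\|(-\lap)^{\frac n4}g\|_{L^2}\|(-\lap)^{\frac n4}b\|_{L^2}$; the resulting norms $\|\laps{n/2}b\|_{L^{n/t}}$ and $\|\laps{n/2}b\|_{L^2}$ are \emph{finite-exponent} instances of Lemma~\ref{la:freakshowestimate1}, which you never use (you only invoke the $q'=\infty$ case). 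The whole point of the three-term commutator estimate is that $H_{n/2}$ is strictly better behaved than the individual Leibniz terms at this borderline. Alternatively, the two-endpoint inequality $\|\laps{n/2}(g\Phi)\|_{L^2}\aleq\|\laps{n/2}g\|_{L^2}\|\Phi\|_{L^\infty}+\|g\|_{L^2}\|\laps{n/2}\Phi\|_{L^\infty}$ (which is all your bookkeeping actually needs, at $t=n/2$ and $t=0$) is a valid instance of the \emph{endpoint} Kato--Ponce inequality of Grafakos--Oh and Bourgain--Li, where the two H\"older splittings are chosen independently and $L^\infty$ is allowed on the non-differentiated factor of either term; but that is a nontrivial theorem that must be cited as such --- it is not the ``standard'' Kato--Ponce you quote, and it is not recovered by the paraproduct computation you sketch.
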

\begin{proof}
By duality, the claim follows if we show for any $\varphi \in C_c^\infty(B_\rho)$
\begin{equation}\label{eq:freak2claim2}
  \vrac{(-\lap)^\frac{n}{4} (\theta_{A_{2^k\rho}} g(\lapms{s} \varphi))}_{L^2(\R^n)} \aleq \max_{t \in [0,\frac{n}{2}]} \vrac{\laps{t}g}_{L^2(\R^n)} (2^k \rho)^{t-\frac{n}{2} + s - m} \rho^{\frac{n}{p}}\ \vrac{\varphi}_{L^{p'}(\R^n)}.
\end{equation}
By the definition of the three-term-commutator $H_{\frac{n}{2}}$, H\"older inequality for a small $t > 0$, and the related estimates, see Theorem~\ref{th:bicomest}, 
\[
\begin{split}
\vrac{(-\lap)^\frac n4 (\theta_{A_{2^k\rho}} g(\lapms{s} \varphi))}_{L^2(\R^n)} 
 \aleq &\vrac{(-\lap)^\frac n4 g}_{L^2(\R^n)}\, \vrac{\theta_{A_{2^k\rho}} \lapms{s} \varphi}_{L^\infty(\R^n)} \\
 &+ \vrac{g}_{L^\frac{2n}{n-2t}(\R^n)}\, \vrac{(-\lap)^\frac n4(\theta_{A_{2^k\rho}} \lapms{s} \varphi)}_{L^\frac{n}{t}(\R^n)}\\ 
 &+ \| H_{\frac{n}{2}} (g,\theta_{A_{2^k\rho}} (\lapms{s} \varphi)) \|_{L^2(\R^n)}\\
\aleq &\vrac{(-\lap)^\frac n4 g}_{L^2(\R^n)}\ \vrac{\theta_{A_{2^k\rho}} \lapms{s} \varphi}_{L^\infty(\R^n)} \\
&+ \vrac{g}_{L^\frac{2n}{n-2t}(\R^n)}\, \vrac{(-\lap)^\frac n4(\theta_{A_{2^k\rho}} \lapms{s} \varphi)}_{L^\frac nt(\R^n)}\\ 
 &+ \| (-\lap)^\frac n4 g\|_{L^2(\R^n)}\ \vrac{(-\lap)^\frac n4(\theta_{A_{2^k\rho}} \lapms{s} \varphi)}_{L^2(\R^n)}\\
  \end{split}
 \]
By the Sobolev inequality,
\[
 \vrac{g}_{L^\frac{2n}{n-2t}(\R^n)} \aleq \vrac{\laps{t} g}_{L^2(\R^n)},
\]
and from Lemma~\ref{la:nonloc} with $\operatorname{supp} \varphi \subset B_\rho$
\[
 \vrac{\theta_{A_{2^k\rho}} \lapms{s} \varphi}_{L^\infty(\R^n)}  \aleq (2^k \rho)^{s-n} \vrac{\varphi}_{L^1(\R^n)} \aleq (2^k \rho)^{s-n} \rho^{\frac{n}{p'}}\ \| \varphi\|_{L^{p'}(\R^n)}.
\]
The remaining terms can be estimated with Lemma~\ref{la:freakshowestimate1}, and \eqref{eq:freak2claim2} follows.
\end{proof}

Let for $t > 0$ the three term commutator given as
\[
 H_t(a,b) := \laps{t}(ab) - b\laps{t}a -a\laps{t}b.
\]
A version similar to $H$ was first was introduced in \cite{DRSphere}. For subsequent similar results and extended arguments see also \cite{DRManifold,Sfracenergy},\cite[Lemma A.5]{BRS},\cite{DSpalphSphere}.
\begin{theorem}\label{th:bicomest}
Given $p \in (1,\infty)$, $t\ge 0$, $p_1, p_2 \in (1,\frac{n}{t}]$ satisfying
\[
 \frac{1}{p} = \frac{1}{p_1} + \frac{1}{p_2} - \frac{t}{n},
\]
it holds
\[
 \vrac{H_t(a,b)}_{L^p(\R^n)} \aleq \vrac{\laps{t} a}_{L^{p_1}(\R^n)}\ \vrac{\laps{t} b}_{L^{p_2}(\R^n)},\quad \text{for }a,b\in \mathcal{S}(\R^n).
\]
\end{theorem}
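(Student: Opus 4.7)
The three-term commutator $H_t(a,b) = \laps{t}(ab) - a\laps{t} b - b\laps{t}a$ was introduced by Da Lio-Rivi\`ere precisely to capture the resonant high-high frequency interaction that is \emph{not} controlled by the naive fractional ``Leibniz rule''. The proof I would give uses the Littlewood-Paley paraproduct calculus, which is the standard route for such Coifman-Meyer type bilinear estimates, and reduces the claim to (i) a paraproduct commutator bound and (ii) a bilinear Fefferman-Stein square function bound for the resonant block.

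Fix a dyadic Littlewood-Paley partition $1 = \sum_j \varphi_j(\xi)$ with $\varphi_j$ supported in $\{|\xi|\sim 2^j\}$, and decompose $a=\sum_j a_j$, $b=\sum_k b_k$ correspondingly, writing $S_N := \sum_{j\le N-1} \Delta_j$ for the associated low-frequency cut-offs. Decompose the product via Bony's paraproduct
\[
 ab = T_a b + T_b a + R(a,b), \qquad T_a b := \sum_k (S_{k-2}a)\, b_k, \quad R(a,b) := \sum_{|j-k|\le 1} a_j b_k.
\]
An analogous decomposition of $a\laps{t}b$ and $b\laps{t}a$, combined with frequency-support bookkeeping, lets one rewrite
\[
 H_t(a,b) = \mathcal C_1 + \mathcal C_2 + \mathcal R,
\]
where $\mathcal C_1 := \laps{t} T_a b - T_a(\laps{t} b)$ and $\mathcal C_2 := \laps{t} T_b a - T_b(\laps{t} a)$ are off-diagonal paraproduct commutators, and $\mathcal R$ collects the resonant contribution $\laps{t} R(a,b)$ together with the high-low and resonant pieces of $a\laps{t}b$ and $b\laps{t}a$.

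For the commutators the key input is the Mihlin-type pointwise bound
\[
 \bigl|[\laps{t}, S_{k-2} a]\, b_k\bigr|(x) \aleq 2^{-k}\, M(\nabla S_{k-2} a)(x)\, M(\laps{t} b_k)(x),
\]
valid because the symbol $|\xi+\eta|^t - |\eta|^t$ (for $|\xi|\ll |\eta|\sim 2^k$) is a classical symbol of order $t-1$ in $\eta$. Summing over $k$, applying the Fefferman-Stein vector-valued maximal inequality, and using $\vrac{\nabla S_{k-2}a}_{L^{p_1}} \aleq 2^{k(1-t)} \vrac{\laps{t} a}_{L^{p_1}}$ together with the scaling $\tfrac{1}{p} = \tfrac{1}{p_1} + \tfrac{1}{p_2} - \tfrac{t}{n}$ yields the desired bound on $\mathcal C_1, \mathcal C_2$. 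The main obstacle is the resonant block $\mathcal R$, where no frequency gap is available to exploit: here I would use the pointwise bound $|\laps{t}(a_j b_k)|\aleq 2^{tk}|a_j|\,|b_k|$ for $|j-k|\le 1$ (since $\operatorname{supp} \widehat{a_j b_k}\subset \{|\xi|\le 2^{k+2}\}$), then apply a bilinear Fefferman-Stein estimate on $\bigl\|(\sum_k 2^{2tk}|a_k b_k|^2)^{1/2}\bigr\|_{L^p}$ in conjunction with the Triebel-Lizorkin identification $\vrac{\laps{t} f}_{L^q} \simeq \bigl\|(\sum_j 2^{2tj}|f_j|^2)^{1/2}\bigr\|_{L^q}$. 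The hypothesis $p_1, p_2\le n/t$ is exactly what guarantees that the resulting square functions are locally integrable at the right level. Equivalently, one can quote the Kato-Ponce-Grafakos-Oh fractional Leibniz inequality, which packages all three estimates into a single known bound.
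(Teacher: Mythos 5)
First, note that the paper does not actually prove Theorem \ref{th:bicomest}: it is quoted from the literature on three-term commutators (Da Lio--Rivi\`ere and the follow-up works cited just above the statement), so there is no in-paper argument to compare against. Your Littlewood--Paley outline follows the same strategy as those references -- Bony decomposition, paraproduct commutators for the unbalanced frequency interactions, square functions for the resonant block -- so the architecture is the right one.

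There is, however, a genuine problem, concentrated exactly where the theorem has content. The closing claim that one may ``equivalently quote'' the Kato--Ponce/fractional Leibniz inequality is false at the endpoint $p_i=n/t$. For $p_1,p_2<n/t$ strictly, each of the three terms $\laps{t}(ab)$, $a\laps{t}b$, $b\laps{t}a$ separately obeys the stated bound, by Kato--Ponce (resp.\ H\"older) combined with the Sobolev embedding $\|a\|_{L^q}\aleq\|\laps{t}a\|_{L^{p_1}}$, $\tfrac1q=\tfrac1{p_1}-\tfrac tn$; no commutator structure is needed there. At $p_1=n/t$ that embedding degenerates to $q=\infty$, Kato--Ponce would require $\|a\|_{L^\infty}$ (not controlled by $\|\laps{t}a\|_{L^{n/t}}$, only the BMO norm is), and it is precisely the cancellation inside $H_t$ that rescues the estimate. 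This endpoint is the only case the paper uses: in Lemma~\ref{la:freakshowestimate2} one takes $t=\tfrac n2$, $p_1=p_2=2=\tfrac nt$, the double endpoint. A proof that does not survive $p_i=n/t$ proves nothing beyond the trivial range. Two further steps need repair for the same reason. The estimate $\|\nabla S_{k-2}a\|_{L^{p_1}}\aleq 2^{k(1-t)}\|\laps{t}a\|_{L^{p_1}}$ comes from summing Bernstein over $j\le k-2$, and the geometric series $\sum_{j\le k}2^{j(1-t)}$ is comparable to $2^{k(1-t)}$ only when $t<1$; for $t\ge 1$ (the relevant case, $t=\tfrac n2$) it diverges, and one must instead keep the full derivative on $a$ globally via $\|\nabla a\|_{L^{r_1}}\aleq\|\laps{t}a\|_{L^{p_1}}$ with $\tfrac1{r_1}=\tfrac1{p_1}-\tfrac{t-1}{n}$ and rebalance the H\"older exponents. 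Also $|\laps{t}(a_jb_k)|\aleq 2^{tk}|a_j|\,|b_k|$ is false pointwise; the kernel of $\laps{t}$ composed with the projection onto frequencies $\aleq 2^k$ is integrable but not a Dirac mass, so the correct bound is $2^{tk}M(a_jb_k)$, and in the resonant block one should split the derivative, $2^{tk}=2^{\theta tk}2^{(1-\theta)tk}$ with $\theta\in(0,1)$, so that both resulting square functions land in finite Lebesgue exponents even when $p_i=n/t$. With these corrections the sketch can be completed; as written it only covers the non-endpoint, $t<1$ regime.
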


\appendix
\section{Appendix}
\subsection{The Riesz transform and Riesz potential}
We define the Riesz potential of $u$ for $s \in (0,n)$ and $u \in \Sw(\R^n)$ 
\[
\lapms{s} u  := |\cdot|^{s-n} \ast u,
\]
By the density of the Schwartz class $\Sw(\R^n)$ in $L^p(\R^n)$, the Riesz potential $\lapms{s}$ can be extended to an operator mapping $L^p(\R^n)$ into $L^{\frac{np}{n-s}}(\R^n)$ whenever $p, \frac{np}{n-s} \in (1,\infty)$.
Up to a constant, the Riesz potential $\lapms{s}$ is the inverse of the fractional laplacian $\laps{s}$, in the sense that for a constant $c_{n,s} \in \R$
\[
\laps{s} \lapms{s} f =\lapms{s} \laps{s} f = c_{n,s} f \quad \forall f \in \Sw(\R^n).
\]
The Riesz transform $\Rz = (\Rz_1,\dots ,\Rz_n)$ is defined as 
\[
  \Rz u(x) := \int_{\R^n} \frac{x-y}{|x-y|^{n+1}}\, u(y)\, dy,\quad u\in \mathcal{S}(\R^n),
\]
and by density can be extended to a continuous operator from $L^p(\R^n)$ into itself:
\begin{equation}\label{Rieszprop-1}
\|\Rz u\|_{L^p(\R^n)}\le c_{p,n}\|u\|_{L^p(\R^n)}\quad \text{for }u\in L^p(\R^n).
\end{equation}
One crucial properties of the Riesz transform is the that
\begin{equation}\label{Rieszprop0}
\sum_{i=1}^n \Rz_i \Rz_i = c_n\, Id,
\end{equation}
and
\begin{equation}\label{Rieszprop}
 \Rz\laps{1} f = c_n\, \nabla f,\quad u\in \mathcal{S}(\R^n).
\end{equation}

We also recall the following property:

\begin{lemma}[``Integration by parts'']\label{la:pi}
For any $f \in L^p(\R^n)$, $g \in L^{p'}(\R^n)$, $p \in [1,\infty]$ so that $\Rz f \in L^p(\R^n)$, $\Rz g \in L^{p'}(\R^n)$ it holds
\begin{equation}\label{eq:RiezPI}
 \int_{\R^n} \Rz f\, g\, dx = -\int_{\R^n} f\, \Rz g\, dx. 
\end{equation}
For any $f \in L^p(\R^n)$, $g \in L^{p'}(\R^n)$, $p \in (1,\infty)$ so that $\laps{s} f \in L^{p}(\R^n)$, $\laps{s} g \in L^{p'}(\R^n)$,
\begin{equation}\label{eq:fracPI}
 \int_{\R^n} \laps{s}f\, g\, dx = \int_{\R^n} f\, \laps{s} g\, dx. 
\end{equation}
 \end{lemma}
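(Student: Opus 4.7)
\textbf{Proof plan for Lemma~\ref{la:pi}.}
The plan is to check both identities first on the Schwartz class via Plancherel, and then extend by approximation to the stated $L^p$ hypotheses; the only genuinely delicate case is the endpoint $p\in\{1,\infty\}$ of the Riesz identity.

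For $f,g\in\Sw(\R^n)$ the Riesz transform $\Rz$ is a Fourier multiplier with componentwise symbol $-i\xi_j/|\xi|$, which is \emph{odd} in $\xi$, while $\laps{s}$ has the real, \emph{even} symbol $|\xi|^s$. Parseval together with the oddness of the Riesz symbol immediately yields
\[
\int_{\R^n}\Rz f\,g\,dx=-\int_{\R^n}f\,\Rz g\,dx,
\]
and the same computation with the even symbol $|\xi|^s$ produces the symmetric identity for $\laps{s}$. Equivalently, in physical space $\Rz$ is convolution with the odd kernel $c\,x/|x|^{n+1}$, so its formal adjoint is $-\Rz$, whereas $\laps{s}$ has an even kernel and is self-adjoint.

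For $p\in(1,\infty)$ the extension to arbitrary $L^p$ functions is routine: pick $f_j,g_j\in\Sw(\R^n)$ with $f_j\to f$ in $L^p$ and $g_j\to g$ in $L^{p'}$; by~\eqref{Rieszprop-1} also $\Rz f_j\to\Rz f$ in $L^p$ and $\Rz g_j\to\Rz g$ in $L^{p'}$, and H\"older lets us pass to the limit. For~\eqref{eq:fracPI} one similarly approximates $f$ by $\chi_R(\rho_\eps\ast f)\in\Sw(\R^n)$ where $\chi_R$ is a smooth spatial cutoff and $\rho_\eps$ a mollifier; since $\laps{s}$ commutes with convolution and the cutoff error is controlled by H\"older over $\{|x|\ge R\}$, the Schwartz identity passes to the limit.

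The step where I expect the real technical difficulty is the endpoint $p\in\{1,\infty\}$ of~\eqref{eq:RiezPI}, since $\Rz$ is not $L^p$-bounded there and naive density fails. Here the \emph{joint} hypothesis $\Rz f\in L^p$, $\Rz g\in L^{p'}$ becomes essential: one mollifies $f_\eps:=\rho_\eps\ast f$ and exploits $\Rz f_\eps=\rho_\eps\ast\Rz f$, so that $f_\eps\to f$ and $\Rz f_\eps\to\Rz f$ strongly in $L^p$ when $p<\infty$ and weakly-$\ast$ when $p=\infty$, and similarly for $g$. A further spatial cutoff brings the mollified functions into $\Sw(\R^n)$; the resulting cutoff error is a pairing of $f\,\Rz g$ and $\Rz f\,g$ over $\{|x|\ge R\}$, which vanishes as $R\to\infty$ by dominated convergence because $f\,\Rz g,\,\Rz f\,g\in L^1(\R^n)$ by H\"older. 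This is the only point at which the joint integrability hypothesis is used, and where the bulk of the bookkeeping lives.
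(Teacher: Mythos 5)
The paper does not actually prove Lemma~\ref{la:pi}: it is recalled in the appendix as a standard fact, with no argument given, so there is nothing to compare your proposal against line by line. Your core plan is the standard and correct one: on $\Sw(\R^n)$ the identities follow from Plancherel together with the oddness of the Riesz symbol (equivalently, the odd kernel $c\,x/|x|^{n+1}$) and the evenness of $|\xi|^s$, and for $p\in(1,\infty)$ the $L^p$-boundedness of $\Rz$ makes the density step for \eqref{eq:RiezPI} routine. You also correctly isolate the two genuinely delicate points, namely the density step for \eqref{eq:fracPI} and the endpoint $p\in\{1,\infty\}$ of \eqref{eq:RiezPI}.

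There is, however, one concrete gap in how you propose to close those delicate steps. You assert that after mollifying, the error introduced by the spatial cutoff $\chi_R$ ``is a pairing of $f\,\Rz g$ and $\Rz f\,g$ over $\{|x|\ge R\}$'' and hence vanishes by dominated convergence. This is not literally true: $\Rz$ and $\laps{s}$ are nonlocal and do not commute with multiplication by $\chi_R$, so the error contains terms like $\Rz\bigl((1-\chi_R)f_\eps\bigr)$ and $\laps{s}\bigl((1-\chi_R)f_\eps\bigr)$, which are supported on all of $\R^n$ (in particular near the annulus where $\chi_R$ transitions) and are not controlled by a H\"older bound on a tail region. To make this rigorous you need a disjoint-support/kernel-decay estimate of exactly the type the paper proves in Lemma~\ref{la:nonloc}, or equivalently the known density of $\Sw(\R^n)$ in the Bessel-potential space $\{g\in L^{q}: \laps{s}g\in L^{q}\}$ for $1<q<\infty$. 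A second, smaller issue at the endpoint: for $g\in L^\infty(\R^n)$ the kernel of $\Rz$ is not integrable at infinity, so $\Rz g$ is a priori defined only modulo constants (as a BMO function); the hypothesis ``$\Rz g\in L^{p'}$'' with $p'=\infty$ presupposes a choice of representative, and your mollification scheme should be checked to be consistent with that choice (note $\rho_\eps\ast\Rz g$ and $\Rz(\rho_\eps\ast g)$ then agree only up to a constant, which does pair nontrivially against $f\in L^1$ unless $\int f=0$ or the constant is shown to vanish). Neither issue is fatal — and for every application in the paper one has $p\in(1,\infty)$, where your argument is complete — but as written the endpoint paragraph does not yet constitute a proof.
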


Note that together \eqref{eq:RiezPI} and \eqref{eq:fracPI} imply the usual integration by parts formula
\[
 \int_{\R^n} \nabla f\, g\, dx = c\int_{\R^n} \Rz \laps{1} f\, g\, dx = -c\int_{\R^n}  f\, \Rz \laps{1}g\, dx = -\int_{\R^n}  f\, \nabla g
 \,dx.
\]

\subsection{Lorentz spaces and Sobolev inequality}
\begin{definition}
For $1< p <\infty$ and $1\leq q \leq \infty$, we define the Lorentz space $L^{(p,q)}(\mathbb{R}^{n})$ as the space of measurable functions $f$ for which
$$\|f\|_{L^{(p,q)}}:=p^{1/q} \|\lambda |\{|f|>\lambda\}|^{1/p}\|_{L^{q}(\frac{d\lambda}{\lambda})}<\infty$$
\end{definition}
It is important to notice that $L^{(p,p)}=L^{p}$ and $L^{(p,q)}\subset L^{(p,r)}$ if $r\geq q$.

\begin{proposition}[Sobolev inequality]\label{pr:Sobolev}
Let $1<p<\frac{n}{\alpha}$ and $1\leq r \leq\infty$. If $f\in L^{(p,q)}(\mathbb{R}^{n})$, then $I_{\alpha}f \in L^{(q,r)}(\mathbb{R}^{n})$ for $q=\frac{np}{n-\alpha p}$. Moreover, there exists $C> $ such that
$$\|\lapms{\alpha}f\|_{L^{(p,r)}(\R^n)}\leq C\|f\|_{L^{(q,r)}(\R^n)}.$$
For $p=1$, $I_{\alpha}$ maps $L^1(\R^n)$ into $L^{(q,\infty)}(\R^n)$ for $q=\frac{n}{n-\alpha}$. 
For $p = \frac{n}{\alpha}$, $I_{\alpha}$ is bounded from $L^{(p,1)}$ into $L^\infty(\R^n)$.
\end{proposition}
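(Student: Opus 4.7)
The plan is to derive this Lorentz-space Sobolev inequality from the O'Neil convolution inequality applied to the representation $\lapms{\alpha}f = c_{n,\alpha}\,|\cdot|^{\alpha-n}\ast f$. First I would compute the Lorentz norm of the Riesz kernel $K(x):=|x|^{\alpha-n}$: its distribution function is $|\{K>\lambda\}|=\omega_n\lambda^{-n/(n-\alpha)}$, so a direct calculation gives
\[
\|K\|_{L^{(\frac{n}{n-\alpha},\infty)}(\R^n)} = c(n,\alpha) < \infty,
\]
while $K\notin L^{(\frac{n}{n-\alpha},s)}$ for any finite $s$. This places the kernel exactly on the borderline Lorentz space that drives the Sobolev scaling.

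Next I would invoke O'Neil's convolution inequality in Lorentz spaces (see \cite{one}): if $1<p_1,p_2<\infty$ satisfy $\frac{1}{p_1}+\frac{1}{p_2}>1$, $\frac{1}{r}:=\frac{1}{p_1}+\frac{1}{p_2}-1$, and $1\le q_1,q_2\le\infty$ with $\frac{1}{s}\le \frac{1}{q_1}+\frac{1}{q_2}$, then
\[
\|f\ast g\|_{L^{(r,s)}(\R^n)} \aleq \|f\|_{L^{(p_1,q_1)}(\R^n)}\,\|g\|_{L^{(p_2,q_2)}(\R^n)}.
\]
Applying this with $g=K$, $(p_2,q_2)=(\tfrac{n}{n-\alpha},\infty)$ and $(p_1,q_1)=(p,r)$ for $1<p<\tfrac{n}{\alpha}$ gives $\tfrac{1}{r_{\mathrm{out}}}=\tfrac{1}{p}-\tfrac{\alpha}{n}$, i.e.\ the Sobolev exponent $q=\tfrac{np}{n-\alpha p}$, and yields the claimed estimate
\[
\|\lapms{\alpha}f\|_{L^{(q,r)}(\R^n)} \aleq \|f\|_{L^{(p,r)}(\R^n)}.
\]

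For the endpoints I would argue separately. The weak-type endpoint $p=1$ is the weak Young inequality: $\|K\ast f\|_{L^{(n/(n-\alpha),\infty)}}\aleq \|K\|_{L^{(n/(n-\alpha),\infty)}}\|f\|_{L^1}$, which is the straightforward half of O'Neil and follows from Chebyshev applied to $K\ast f$. For the $L^\infty$ endpoint $p=\tfrac{n}{\alpha}$, I would use duality and the integration-by-parts formula \eqref{eq:fracPI}: for $\varphi\in C_c^\infty(\R^n)$,
\[
\Bigl|\int_{\R^n}\lapms{\alpha}f\cdot\varphi\,dx\Bigr| = \Bigl|\int_{\R^n}f\cdot\lapms{\alpha}\varphi\,dx\Bigr| \le \|f\|_{L^{(n/\alpha,1)}}\,\|\lapms{\alpha}\varphi\|_{L^{(n/(n-\alpha),\infty)}} \aleq \|f\|_{L^{(n/\alpha,1)}}\|\varphi\|_{L^1},
\]
where the middle step uses the Lorentz H\"older inequality of \cite{one} (so the $L^{(n/\alpha,1)}$ summability is exactly dual to $L^{(n/(n-\alpha),\infty)}$, which is why $L^{(n/\alpha,1)}$ and not merely $L^{n/\alpha}$ is the correct space), and the last step is the weak-type endpoint just proved. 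Taking supremum over $\varphi$ gives $\lapms{\alpha}f\in L^\infty(\R^n)$ with the required bound.

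The only real work is O'Neil's convolution inequality, which I would take as a black box from the classical literature; everything else is bookkeeping of exponents and an explicit computation of $\|K\|_{L^{(n/(n-\alpha),\infty)}}$. The main subtlety worth flagging is the asymmetry in the endpoint spaces ($L^1$ for the weak-type endpoint versus $L^{(n/\alpha,1)}$ and not $L^{n/\alpha}$ for the $L^\infty$ endpoint), which reflects the fact that $K$ lies in $L^{(n/(n-\alpha),\infty)}$ but not in any strong Lorentz refinement, so this failure of absolute integrability must be compensated on the other side.
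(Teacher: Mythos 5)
Your proposal is correct, and it is the standard derivation; the paper itself offers no proof of this proposition (it is stated in the appendix as a classical fact, with O'Neil's paper \cite{one} and Bennett--Sharpley \cite{BS} serving as the implicit references), so your O'Neil-based route is exactly the argument being taken as known. The exponent bookkeeping checks out: $K=|\cdot|^{\alpha-n}$ lies in $L^{(\frac{n}{n-\alpha},\infty)}$ and in no finer Lorentz refinement, O'Neil with $(p_2,q_2)=(\frac{n}{n-\alpha},\infty)$ gives the interior range with second index preserved, and your duality argument for the $p=\frac{n}{\alpha}$ endpoint (or, even more directly, a single application of the Lorentz--H\"older inequality to $\int K(x-y)|f(y)|\,dy$ pointwise in $x$) is valid. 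Note also that your version states the inequality with the exponents in the correct places, i.e.\ $\|\lapms{\alpha}f\|_{L^{(q,r)}}\leq C\|f\|_{L^{(p,r)}}$, whereas the display in the proposition as printed has $p$ and $q$ transposed. One small imprecision: the weak-type $(1,\frac{n}{n-\alpha})$ endpoint does not follow from ``Chebyshev applied to $K\ast f$'' alone, since $K\ast f$ need not lie in any strong $L^s$; the standard proof splits $K=K\chi_{B_R}+K\chi_{B_R^c}$, estimates the two convolutions in $L^1$ and $L^\infty$ respectively, and optimizes over $R$ for each level $\lambda$. This is elementary and classical, so it is a presentational slip rather than a gap, but as written that sentence would not compile into a proof.
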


From \cite[Corollary 6.16]{BS} we have
\begin{proposition}\label{pr:orliczsobolev}
$I_{\alpha}$ is a bounded linear operator from $L \log^r L(\R^n)$ to $L^{(\frac{n}{n-\alpha},\frac{1}{r})}(\R^n)$ whenever $r \leq 1$, $\alpha \in (0,n)$.
\end{proposition}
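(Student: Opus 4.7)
My plan is to reduce the claim to a one-dimensional inequality for decreasing rearrangements via O'Neil's pointwise bound for Riesz potentials, and then match the resulting weighted one-variable norm against the Lorentz-Zygmund characterization of $L^{(p,q)}$ and the rearrangement description of the Orlicz space $L\log^r L$.

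First I would recall O'Neil's inequality for the Riesz potential (see e.g.\ Bennett-Sharpley Ch.~4): for $\alpha\in(0,n)$ and every $t>0$,
\[
(\lapms{\alpha}f)^{**}(t)\;\le\; C\left( t^{\frac{\alpha}{n}-1}\int_{0}^{t}f^{*}(s)\,ds \;+\; \int_{t}^{\infty} s^{\frac{\alpha}{n}-1}\,f^{*}(s)\,ds\right).
\]
Calling $p=\frac{n}{n-\alpha}$ and $q=\frac1r$, the target Lorentz norm admits the equivalent expression
\[
\|\lapms{\alpha}f\|_{L^{(p,q)}(\R^n)}\;\aeq\;\left\|t^{\frac{1}{p}}(\lapms{\alpha}f)^{**}(t)\right\|_{L^{q}(dt/t)}
\;=\;\left\|t^{\frac{n-\alpha}{n}}(\lapms{\alpha}f)^{**}(t)\right\|_{L^{q}(dt/t)},
\]
so the proof reduces to estimating the two pieces produced by O'Neil in $L^{q}(dt/t)$ after multiplication by $t^{(n-\alpha)/n}$. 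The first piece becomes $t^{-1}\int_{0}^{t}f^{*}(s)\,ds$, to which a Hardy inequality in $L^{q}(dt/t)$ applies (this is clean precisely because $q\ge 1$, i.e.\ $r\le 1$); the second piece becomes $t^{(n-\alpha)/n}\int_{t}^{\infty}s^{\alpha/n-1}f^{*}(s)\,ds$, to which the dual Hardy inequality applies. Both are controlled by
\[
\Bigl(\int_{0}^{\infty}\bigl(t\,f^{*}(t)\bigr)^{q}\,\log^{?}\!\bigl(e+\tfrac{1}{t}\bigr)\,\tfrac{dt}{t}\Bigr)^{1/q}
\]
with the log exponent matching the Orlicz weight built into $L\log^{r}L$.

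It remains to identify the one-dimensional functional appearing on the right with the Orlicz norm. For this I would use the standard rearrangement description
\[
\|f\|_{L\log^{r}L}\;\aeq\;\int_{0}^{\infty} f^{*}(t)\,\Bigl(1+\log^{+}\!\tfrac{1}{t}\Bigr)^{r}\,dt,
\]
valid on $\R^{n}$ (one can first handle the case of $f$ supported on a set of finite measure and then obtain the general statement by truncation, since the nontrivial behaviour comes from the small-$t$ tail of $f^{*}$). Matching exponents $q=1/r$ and checking that the weights on the two sides are compatible yields the claim.

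The main technical point, and the only place where I expect to have to be careful, is the passage from the weighted $L^{q}(dt/t)$ norm of the O'Neil majorant to the Orlicz norm of $f$: the Hardy inequalities used are sharp and therefore constrain $r\le 1$ (equivalently $q\ge 1$), and one needs to keep track of the $\log$-weight exactly so that no spurious factor $(1+\log^{+}(1/t))^{r\pm 1}$ remains. This is precisely the content of the borderline case of the Calder\'on-type rearrangement inequalities treated in \cite{BS}, and in practice I would simply invoke \cite[Corollary~6.16]{BS}, but the sketch above indicates how the bound is obtained from first principles.
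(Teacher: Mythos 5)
The paper gives no proof of this proposition at all---it is stated as a direct quotation of \cite[Corollary 6.16]{BS}---and since your argument also ultimately invokes that same corollary, your proposal follows essentially the same route as the paper, with the added value of sketching the standard O'Neil--Hardy mechanism behind it. One bookkeeping slip in the sketch: multiplying the first O'Neil term by $t^{(n-\alpha)/n}$ yields $\int_0^t f^*(s)\,ds = t\,f^{**}(t)$, not $t^{-1}\int_0^t f^*(s)\,ds$ (and the resulting $L^q(dt/t)$ integral is only finite over a range $(0,|\Omega|)$ with $|\Omega|<\infty$, which is the setting in which the cited corollary and its application in Proposition \ref{borderline1} are actually used), but this does not change the structure of the argument.
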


\subsection{Interior estimates}
The following are a few estimates which could be seen as $L^p$-theory for the fractional Laplacian. Since we only need interior estimates, the proofs are long, but elementary -- just relying on the definitions of Riesz potential, Riesz transform and fractional Laplacian. 
\begin{lemma}\label{la:sharmonicest}
Let $\Omega\subset\R^n$ be open. Then for any $h \in H^\frac{n}{2}(\R^n)$ satisfying 
\begin{equation}\label{eq:lan2heq0}
 \int_{\R^n} (-\lap)^\frac{n}{4} h\, (-\lap)^\frac{n}{4} \varphi\, dx = 0 \quad \forall \varphi \in C_c^\infty(\Omega),
\end{equation}
we have $h \in C^\infty_{\loc}(\Omega)$, and for any compact set $K\Subset\Omega$ and any $\ell \in \N_0$, $\alpha \in (0,1]$ we have
\[
 [\nabla^\ell h ]_{C^{0,\alpha}(K)} \leq C_{\ell,\alpha, K,\Omega}\ \vrac{(-\lap)^\frac n4 h}_{L^2(\R^n)}.
\]
\end{lemma}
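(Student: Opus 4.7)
My plan is to exploit the odd parity of $n$ and the factorization $(-\Delta)^{n/2}=(-\Delta)^m\circ(-\Delta)^{1/2}$ with $m=(n-1)/2\in\N_0$, so as to reduce the statement to classical integer-order interior elliptic regularity. First I would fix nested open sets $K\Subset\Omega_1\Subset\Omega_2\Subset\Omega$ and a cutoff $\eta\in C_c^\infty(\Omega)$ with $\eta\equiv 1$ on a neighborhood $U\supset\overline{\Omega_2}$, then split $h=h_1+h_2$ with $h_1:=\eta h$ (compactly supported and in $H^{n/2}(\R^n)$) and $h_2:=(1-\eta)h$ (vanishing on $U$).

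For the tail, since $h_2\equiv 0$ on a neighborhood of every $x\in\Omega_2$, the principal value in the singular integral representation of $(-\Delta)^{1/2}$ disappears and
\[
(-\Delta)^{1/2}h_2(x)=c_{n,1}\int_{\R^n\setminus U}\frac{h_2(y)}{|x-y|^{n+1}}\,dy,\qquad x\in\Omega_2.
\]
Differentiation under the integral, using $\dist(\Omega_2,\operatorname{supp} h_2)>0$ and $h_2\in L^2(\R^n)$, shows $(-\Delta)^{1/2}h_2\in C^\infty(\Omega_2)$ with $C^k$-bounds controlled by $\|h\|_{L^2(\R^n)}$. Since $(-\Delta)^m$ is a local operator, $f:=(-\Delta)^{n/2}h_2=(-\Delta)^m\bigl((-\Delta)^{1/2}h_2\bigr)$ also lies in $C^\infty(\Omega_2)$ with analogous bounds, and from $(-\Delta)^{n/2}h=0$ in $\Omega$ we deduce $(-\Delta)^{n/2}h_1=-f$ in $\Omega_2$ distributionally.

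Next I would set $\psi:=(-\Delta)^{1/2}h_1$, which lies in $L^2(\R^n)$ since $h_1$ is compactly supported and in $H^{n/2}(\R^n)\subset H^1(\R^n)$ for $n\ge 3$ (the case $n=1$ is simpler because then $m=0$ and this step is trivial). The integration-by-parts identities of the appendix give $(-\Delta)^m\psi=-f$ on $\Omega_2$ distributionally, which is a classical integer-order elliptic equation of order $n-1$ with smooth right-hand side, so standard interior regularity yields $\psi\in C^\infty(\Omega_1)$ with
\[
\|\psi\|_{C^k(K')}\le C_{k,K',\Omega_1}\bigl(\|\psi\|_{L^2(\Omega_1)}+\|f\|_{C^k(\Omega_1)}\bigr),\qquad K'\Subset\Omega_1.
\]

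Finally I would invert $(-\Delta)^{1/2}$ locally: extending $\psi|_{\Omega_1}$ to some $\tilde\psi\in C_c^\infty(\R^n)$ and setting $u:=\lapms{1}\tilde\psi\in C^\infty(\R^n)$, the difference $v:=h_1-u$ satisfies $(-\Delta)^{1/2}v=0$ in a neighborhood of $K$ and is therefore smooth there by the classical interior regularity of $\tfrac12$-harmonic functions (via the Caffarelli--Silvestre extension, or by the Poisson kernel on a ball). Combined with the smoothness of $h_2$, this gives $h\in C^\infty_{\loc}(\Omega)$. To reach the quantitative bound purely in terms of $\|(-\Delta)^{n/4}h\|_{L^2(\R^n)}$, I would track constants through each step and exploit that $[\nabla^\ell h]_{C^{0,\alpha}(K)}$ is a seminorm vanishing on low-degree polynomials, so that a Poincar\'e--Sobolev argument on a large enough ball absorbs the local $L^2$ norms of $h$ and $\psi$ into $\|(-\Delta)^{n/4}h\|_{L^2(\R^n)}$. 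The hard part will be this last inversion of $(-\Delta)^{1/2}$; an appealing alternative is to iterate the cutoff decomposition with $s=\tfrac12$ in place of $s=\tfrac n2$, so as to stay entirely within the framework of the paper rather than invoking external regularity theory.
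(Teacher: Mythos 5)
Your factorization $(-\Delta)^{\frac n2}=(-\Delta)^{\frac{n-1}{2}}\circ(-\Delta)^{\frac12}$ with the fractional factor landing on $h$ is a genuinely different route from the paper's, which instead rewrites the pairing as $\int_{\R^n}\nabla h\cdot\nabla(-\Delta)^{\frac{n-2}{2}}\varphi\,dx$ and chooses $\varphi=\eta_{K_2}I_{n-2}\Delta^k\psi$, so that the fractional operator acts on the \emph{test function}, the tail is handled by a disjoint-support operator bound, and in the end only classical regularity for $\Delta^{k+1}h$ is needed — no fractional operator ever has to be inverted. Your route is plausible in outline, but two steps do not close as written. First, the final inversion of $(-\Delta)^{\frac12}$, which you yourself flag as the hard part, is precisely where genuine nonlocal interior regularity is required: you need smoothness of $v=h_1-c\,I_1\tilde\psi$ from $(-\Delta)^{\frac12}v=0$ near $K$, with $v$ only known to lie in a weighted $L^1$ class; nothing in the paper supplies this, you would have to import Caffarelli--Silvestre or Poisson-kernel regularity and verify $v$ is admissible (and note $I_1$ is logarithmic when $n=1$). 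Your proposed alternative of iterating the cutoff decomposition with $s=\tfrac12$ hits the same wall at the last step.

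Second, and more seriously, the constant you produce is $C\bigl(\|(-\Delta)^{\frac n4}h\|_{L^2(\R^n)}+\|h\|_{L^2(\R^n)}\bigr)$: the tail $(-\Delta)^{\frac12}h_2(x)=c\int_{\R^n\setminus U}|x-y|^{-n-1}h_2(y)\,dy$ is only controlled, via Cauchy--Schwarz, by the \emph{global} $L^2$ norm of $h$, and this propagates into $f$ and then into $\psi$. The suggested repair --- a Poincar\'e--Sobolev argument absorbing local $L^2$ norms into $\|(-\Delta)^{\frac n4}h\|_{L^2}$ --- fails: $\|h\|_{L^2(B)}$ is \emph{not} dominated by the homogeneous seminorm (a mollified logarithmic cutoff $h_R=\min(1,\max(0,2-\log|x|/\log R))$ has $\|(-\Delta)^{\frac n4}h_R\|_{L^2}^2$ of order $(\log R)^{-1}$ while $\|h_R\|_{L^2(B_1)}$ stays bounded below), and you cannot subtract the offending constant from $h$ without leaving $H^{\frac n2}(\R^n)$. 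The purely homogeneous form of the estimate is exactly what is used later (e.g.\ in Lemma \ref{diff}, where the bounds must not degenerate like $m_k$), so this is not cosmetic. The paper sidesteps it because its only global input is $\|\nabla h\|_{L^n(\R^n)}\le C\|(-\Delta)^{\frac n4}h\|_{L^2(\R^n)}$; to salvage your argument you would either have to rerun the decomposition for $\nabla h$, so the tail is controlled by $\|\nabla h\|_{L^n}$, or add a separate normalization/compactness step.
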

\begin{proof}
The smoothness $h$, i.e. $h \in C^\infty_{\loc}(\Omega)$, follows via an approximation argument from the a priori estimates below.
Notice that \eqref{eq:lan2heq0} can be rewritten as

\begin{equation}\label{eq:lan2heq0bis}
 \int_{\R^n} \nabla h\cdot \nabla \laps{n-2} \varphi\,dx= 0, \quad \forall \varphi \in C_c^\infty(\Omega).
\end{equation}
Fix now $K \subset \subset K_1 \subset \subset K_2 \subset \subset \Omega$. For arbitary $\psi \in C_c^\infty(K_1)$ we have for $k \in \N_0$, 
\[
 \lap^k \psi = \laps{n-2} \lapms{n-2} \lap^k\psi.
\]
Thus, taking a cutoff-function $\eta_{K_2} \in C_c^\infty(\Omega)$, $\eta_{K_2} \equiv 1$ on $K_2$, 
\[
  \lap^k\psi = \laps{n-2} (\eta_{K_1}\lapms{n-2} \lap^k\psi) + \laps{n-2} ((1-\eta_{K_1}) \lapms{n-2} \lap^k\psi).
\]
Thus for any $\psi \in C_c^\infty(K_1)$, using \eqref{eq:lan2heq0bis} with $\varphi:=\eta_{K_2} I_{n-2}\Delta^k\psi$, we get
 \begin{align*}
\int_{\R^n}\nabla h \cdot \nabla  \lap^k \psi\, dx
= &\int_{\R^n} \nabla  h\cdot  \nabla \laps{n-2} ((1-\eta_{K_2}) \lapms{n-2} \lap^k\psi)\,dx \\
\leq & \vrac{\nabla h}_{L^n(\R^n)}  \vrac{\nabla \laps{n-2} ((1-\eta_{K_2}) \lapms{n-2} \lap^k\psi)}_{L^{n'}(\R^n)}\\
\leq & \vrac{\nabla h}_{L^n(\R^n)}  \vrac{\laps{n-1} ((1-\eta_{K_2}) \lapms{n-2} \lap^k\psi)}_{L^{n'}(\R^n)}\\
\leq & C_{K_1,K_2}\, \vrac{(-\lap)^\frac n4 h}_{L^2(\R^n)}\, \vrac{\psi}_{L^1(\R^n)}.
\end{align*}
The second-to-last step follows again from $\nabla = \Rz \laps{1}$ and because the Riesz transform $\Rz$ is bounded on $L^{n'}$. In the last step we used that the support of $1-\eta_{K_2}$ and $\psi$ are disjoint to apply Lemma~\ref{disjointoperator}, and Sobolev inequality. Classical regularity theory of elliptic PDE ensures that $h$ belongs to any Sobolev space $W^{\ell,p}_{\loc}(K_1)$ for any $\ell \in \N$, $p \in (1,\infty)$ together with the estimates
\[
 \vrac{h}_{W^{\ell,p}(K)} \leq C_{\ell,p,K,\Omega}\ (\vrac{(-\lap)^\frac n4 h}_{L^2(\R^n)} + \vrac{h}_{L^2(\R^n)}) \aleq  \|h\|_{H^{\frac{n}{2}}(\R^n)},
\]
and
\[
 \vrac{\nabla h}_{W^{\ell+1,p}(K)} \leq C_{\ell,p,K,\Omega}\ \vrac{(-\lap)^\frac n4 h}_{L^2(\R^n)}
\]
The latter implies via the Morrey-Sobolev imbedding that for any $\alpha \in (0,1)$, $l \in \N_0$
\[
 [\nabla^\ell h]_{C^{0,\alpha}(K)} \leq C_{\ell,\alpha, K,\Omega}\, \vrac{(-\lap)^\frac n4 h}_{L^2(\R^n)} .
 \]
\end{proof}

\begin{lemma}\label{la:LinftyRHSest}
Let $\Omega$ be an open set of $\R^n$. Then for any $h \in H^\frac n2(\R^n)$ satisfying 
\[
 \int_{\R^n} (-\lap)^\frac{n}{4} h\, (-\lap)^\frac n4 \varphi\, dx = \int_{\R^n} f \varphi\, dx \quad \forall \varphi \in C_c^\infty(\Omega),
\]
and for any $\ell \in \{0,1,\ldots,n-1\}$, $\alpha \in (0,1)$ we have on any compact $K \subset\subset \Omega$,
\[
[\nabla^\ell h]_{C^{0,\alpha}(K)} \leq C_{\ell,\alpha,\Omega,K}\, \vrac{(-\lap)^\frac n4 h}_{L^2(\R^n)} + C_{\ell,\Omega,K}\, \vrac{f}_{L^\infty(\Omega)}.
\]
\end{lemma}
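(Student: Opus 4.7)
The plan is to adapt the testing strategy of Lemma~\ref{la:sharmonicest} so as to accommodate the source term $f$, producing $L^\infty$-control on an integer power of $\lap$ applied to $h$, and then to upgrade to the claimed $C^{n-1,\alpha}$-regularity through interior regularity for the half-Laplacian. I focus on $n\ge 3$; the case $n=1$ admits a simpler direct treatment.

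Fix nested compact sets $K \Subset K_1 \Subset K_2 \Subset \Omega$ and a cutoff $\eta_{K_2}\in C_c^\infty(\Omega)$ with $\eta_{K_2}\equiv 1$ on $K_1$. For $\psi\in C_c^\infty(K_1)$ and an integer $k$ with $0\le k\le (n-3)/2$, I substitute $\varphi := \eta_{K_2}\,\lapms{n-2}\lap^k\psi\in C_c^\infty(\Omega)$ into the weak formulation, rewrite the left-hand side as $\int \nabla h\cdot\nabla \laps{n-2}\varphi\,dx$, and use the decomposition
\[
\laps{n-2}(\eta_{K_2}\,\lapms{n-2}\lap^k\psi) = \lap^k\psi - \laps{n-2}\bigl((1-\eta_{K_2})\,\lapms{n-2}\lap^k\psi\bigr)
\]
to arrive at
\[
\int \nabla h\cdot\nabla \lap^k\psi\,dx = \int \nabla h\cdot\nabla \laps{n-2}\bigl((1-\eta_{K_2})\,\lapms{n-2}\lap^k\psi\bigr)\,dx + \int f\,\eta_{K_2}\,\lapms{n-2}\lap^k\psi\,dx.
\]
The first term on the right is bounded by $C\,\|(-\lap)^{n/4}h\|_{L^2}\|\psi\|_{L^1}$ exactly as in the proof of Lemma~\ref{la:sharmonicest}, via the disjoint supports of $1-\eta_{K_2}$ and $\psi$ (Lemma~\ref{disjointoperator}). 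For the second term, commutation of Fourier multipliers gives $\lapms{n-2}\lap^k\psi=(-1)^k\,\lapms{n-2-2k}\psi$, and Fubini together with the local integrability of $|x|^{-(2+2k)}$ in $\R^n$ (since $2+2k<n$) yields $\|\eta_{K_2}\,\lapms{n-2}\lap^k\psi\|_{L^1(\R^n)}\aleq\|\psi\|_{L^1}$. Reading the left-hand side distributionally as $(-1)^{k+1}\langle\lap^{k+1}h,\psi\rangle$, I conclude
\[
\|\lap^{k+1}h\|_{L^\infty(K_1)} \aleq \|(-\lap)^{n/4}h\|_{L^2(\R^n)} + \|f\|_{L^\infty(\Omega)},\qquad 0\le k\le \tfrac{n-3}{2}.
\]

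Taking the maximal $k=(n-3)/2$ and invoking classical interior $L^p$-theory for the iterated Laplacian $\lap^{(n-1)/2}$ yields $h\in W^{n-1,p}_{\loc}(K_1)$ for every $p<\infty$, with norm controlled by the right-hand side of the stated estimate. To bridge the remaining half-derivative, I set $g := (-\lap)^{(n-1)/2}h\in L^\infty_{\loc}(K_1)$, which by the original equation satisfies $(-\lap)^{1/2}g=(-\lap)^{n/2}h=f\in L^\infty(\Omega)$. Interior H\"older regularity for the half-Laplacian with bounded source (Silvestre) then places $g\in C^{0,\alpha}_{\loc}$ for every $\alpha\in(0,1)$, after which classical Schauder estimates for $\lap^{(n-1)/2}$ deliver $h\in C^{n-1,\alpha}_{\loc}$ with the norm bound claimed.

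The main anticipated obstacle is this last upgrading step: the integer-power testing, constrained by $n-2-2k>0$, only reaches $W^{n-1,p}$-regularity of $h$, which by Morrey embedding falls short of $C^{n-1,\alpha}$ by half a derivative. Closing this gap genuinely requires invoking the non-local equation $(-\lap)^{n/2}h=f$ via interior regularity for $(-\lap)^{1/2}$, rather than relying solely on the integer-order differential information extracted from the testing.
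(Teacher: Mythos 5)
Your first step is sound and genuinely different from the paper's: testing with $\varphi=\eta_{K_2}\lapms{n-2}\lap^k\psi$ for $0\le k\le\frac{n-3}{2}$ does give $\lap^{(n-1)/2}h\in L^\infty_{\loc}(\Omega)$, and you correctly diagnose that integer-order elliptic theory applied to $h$ itself then stalls at $W^{n-1,p}_{\loc}$, half a derivative short of the claim. The genuine gap is in the step you propose to close that deficit. You set $g:=(-\lap)^{(n-1)/2}h$ and invoke interior H\"older regularity for $(-\lap)^{1/2}g=f$. But for $n\ge 3$ the quantity $g$ is globally only a distribution in $H^{\frac n2-(n-1)}(\R^n)=H^{1-\frac n2}(\R^n)$, a \emph{negative}-order space: membership in $H^{n/2}(\R^n)$ controls $|\xi|^{n/2}\hat h$ in $L^2$, not $|\xi|^{n-1}\hat h$. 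Your testing argument makes $g$ a bounded function on $K_1$ only. Interior estimates for $(-\lap)^{1/2}$ with bounded right-hand side require global control of the solution --- $g\in L^\infty(\R^n)$, or at least a finite tail $\int_{\R^n}|g(y)|(1+|y|)^{-n-1}dy$ --- precisely because the operator is nonlocal; as written, the hypothesis is not met and the step does not go through. It can in principle be repaired by splitting $g=\eta g+(1-\eta)g$ and showing that $(-\lap)^{1/2}\bigl((1-\eta)g\bigr)$ is smooth on $K$ by pseudo-locality of the kernel acting on a compactly-away-supported distribution, but then one must quantify this in a negative-order norm of $h$; moreover both here and in your polyharmonic interior estimate you pick up lower-order terms such as $\|h\|_{L^1(K_1)}$ that do not appear on the right-hand side of the stated inequality and must be removed by subtracting averages and using $\|\nabla h\|_{L^n(\R^n)}\aleq\|(-\lap)^{\frac n4}h\|_{L^2(\R^n)}$. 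None of this is carried out in the proposal.

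The paper sidesteps the half-derivative deficit entirely by applying the integer-order theory not to $h$ but to $v:=\laps{1}h$: the weak equation rewrites as
\[
\int_{\R^n} v\,\lap^{\frac{n-1}{2}}\varphi\,dx=\pm\int_{\R^n} f\varphi\,dx ,
\]
$v$ is globally in $L^n(\R^n)$ with $\|v\|_{L^n(\R^n)}\aleq\|(-\lap)^{\frac n4}h\|_{L^2(\R^n)}$ by Sobolev embedding, so elliptic $L^p$ theory yields $v\in W^{n-1,p}_{\loc}$ with the correct constants; then $\nabla h=c\,\Rz v$ is split as $\Rz(\eta_{K_1}v)+\Rz((1-\eta_{K_1})v)$, the first term handled by boundedness of $\Rz$ on $W^{n-1,p}$ and the second by the disjoint-support Lemma~\ref{la:nonloc}. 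This gives $\nabla h\in W^{n-1,p}(K)$, hence the full $C^{n-1,\alpha}$ conclusion by Morrey, with no appeal to nonlocal Schauder theory. I recommend either adopting that device or carefully carrying out the localization of $(-\lap)^{1/2}$ acting on the negative-order far field of $g$.
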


\begin{proof}
The proof is very similar to the one of Lemma~\ref{la:sharmonicest}. Fix again $K \subset \subset K_1 \subset \subset K_2  \subset \subset \Omega$. 
%
We use that the following equation (note that $n-1$ is even and thus $\laps{n-1}$ is the classical $(n-1)$-Laplacian),
\[
 \int_{\R^n} \laps{1} h\, \lap^{\frac{n-1}{2}} \varphi\, dx = \int_{\R^n} f \varphi\, dx \quad \forall \varphi \in C_c^\infty(\Omega).
\]
Elliptic theory implies $\laps{1} h \in W^{n-1,p}_{loc}(\Omega)$ for any $p \in (1,\infty)$, with the estimate
\begin{equation}\label{eq:lapshhwnm1p}
 \vrac{\laps{1}h}_{W^{n-1,p}(K_2 )}  \aleq \vrac{f}_{L^\infty(\Omega)} + \vrac{\laps{1} h}_{L^n(\R^n)} \aleq \vrac{f}_{L^\infty(\Omega)} + \vrac{(-\lap)^\frac n4 h}_{L^2(\R^n)}.
\end{equation}
Here again we used that $\laps{1} h \in L^n(\R^n)$ by Sobolev embedding. With this in mind, we can write $\nabla h$ in terms of the Riesz transform $\Rz$ and $(-\Delta)^\frac12$,
\begin{equation}\label{eq:partialalphahdecomp}
\nabla h = \Rz (\eta_{K_1} \laps{1} h) + \Rz ((1-\eta_{K_1}) \laps{1} h),
\end{equation}
where we have a cutoff function $\eta_{K_1} \in C_c^\infty(K_2 )$ and $\eta_{K_1} \equiv 1$ in $K_1$.
The first term on the right-hand side belongs to $W^{n-1,p}(\R^n)$ by \eqref{eq:lapshhwnm1p} and the boundedness of the Riesz transform, and we have
\[
 \vrac{\Rz (\eta_{K_1} \laps{1} h)}_{W^{n-1,p}(\R^n)} \aleq  \vrac{f}_{\infty} + \vrac{(-\lap)^\frac n4 h}_{L^2(\R^n)}.
\]
The second term on the right-hand side of \eqref{eq:partialalphahdecomp} is smooth in $K$, by the disjoint support of $\chi_K$ and $(1-\eta_{K_1})$. Indeed, by Lemma~\ref{la:nonloc} for any $\ell \geq 0$,
\begin{align*}
\vrac{\nabla^\ell \Rz ((1-\eta_{K_1}) \laps{1} h)}_{L^\infty(K)} \aleq C_{K,K_1}\ \vrac{\laps{1} h}_{L^n(\R^n)} \aleq C_{K,K_1}\ \vrac{(-\lap)^\frac n4 h}_{L^2(\R^n)}.
\end{align*}
Together, we have shown that for any $0 \leq \ell \leq n-1$, $p \in (1,\infty)$,
\[
 \vrac{\nabla h}_{W^{\ell,p}(K)} \aleq \vrac{f}_{L^\infty(\Omega)} + \vrac{(-\lap)^\frac n4 h}_{L^2(\R^n)}.
\]
Now the Sobolev-Morrey embedding gives the claim.
\end{proof}

\end{document}